\begin{document}

\newtheorem*{theo}{Theorem}
\newtheorem*{pro} {Proposition}
\newtheorem*{cor} {Corollary}
\newtheorem*{lem} {Lemma}
\newtheorem{theorem}{Theorem}[section]
\newtheorem{corollary}[theorem]{Corollary}
\newtheorem{lemma}[theorem]{Lemma}
\newtheorem{proposition}[theorem]{Proposition}
\newtheorem{conjecture}[theorem]{Conjecture}

\theoremstyle{definition}
 \newtheorem{definition}[theorem]{Definition} 
  \newtheorem{example}[theorem]{Example}
   \newtheorem{remark}[theorem]{Remark}
   
\newcommand{\Naturali}{{\mathbb{N}}}
\newcommand{\Reali}{{\mathbb{R}}}
\newcommand{\Complessi}{{\mathbb{C}}}
\newcommand{\Toro}{{\mathbb{T}}}
\newcommand{\Relativi}{{\mathbb{Z}}}
\newcommand{\HH}{\mathfrak H}
\newcommand{\KK}{\mathfrak K}
\newcommand{\LL}{\mathfrak L}
\newcommand{\as}{\ast_{\sigma}}
\newcommand{\tn}{\vert\hspace{-.3mm}\vert\hspace{-.3mm}\vert}
\def\A{{\cal A}}
\def\B{{\cal B}}
\def\E{{\cal E}}
\def\F{{\cal F}}
\def\H{{\cal H}}
\def\K{{\cal K}}
\def\L{{\cal L}}
\def\N{{\cal N}}
\def\M{{\cal M}}
\def\gM{{\frak M}}
\def\O{{\cal O}}
\def\P{{\cal P}}
\def\S{{\cal S}}
\def\T{{\cal T}}
\def\U{{\cal U}}
\def\V{{\mathcal V}}
\def\qed{\hfill$\square$}
\def\veps{{\varepsilon}}

\title{Heat properties for groups}

\author{Erik B\'edos, Roberto Conti\\}
\date{\today}
\maketitle
\markboth{R. Conti, Erik B\'edos}{
}
\renewcommand{\sectionmark}[1]{}
\begin{abstract}
We revisit Fourier's approach to solve the heat equation on the circle in the context of (twisted) reduced group C*-algebras, convergence of Fourier series and semigroups associated to negative definite functions. We introduce some heat properties for countably infinite groups and investigate when they are satisfied. Kazhdan's property (T) is an obstruction to the weakest property, and our findings 
leave open the possibility that this might be the only one.  On the other hand, many groups with the Haagerup property satisfy the strongest version.  We show that this heat property implies that the associated heat problem has a unique solution regardless of the choice of the initial datum.
\end{abstract}

\vskip 0.9cm
\noindent {\bf MSC 2020}: 22D25, 22D55, 43A07, 43A35, 43A50, 46L55, 46L57.

\smallskip
\noindent {\bf Keywords}: 
heat equation, negative definite functions, Fourier series, 
reduced twisted group C$^*$-algebras, property (T), Haagerup property

\section{Introduction}
About two hundred years have passed since J.B.~Fourier's treatise ``Th\' eorie analytique de la chaleur'' (``Analytical theory of heat'') was published in Paris. Few books, if any, have had such a fundamental impact on the development of mathematics, with far reaching consequences, both in theory and applications.   Nowadays, in every undergraduate course on partial differential equations, one discusses how to solve the heat equation  $\partial_t u = \partial_{xx} u $ on the circle (i.e., with periodic boundary conditions) by using Fourier series. One important aspect if one wants to give a rigorous proof is that the potential solution $u(x, t)$ obtained after applying the heat kernel to the initial temperature $f_0$ has a \emph{uniformly convergent} Fourier series in $x$ for each fixed time  $t>0$ and regardless of the regularity properties of $f_0$. This makes it possible to handle the technical issues that arise when proving that in fact one has put his hands on a genuine solution. 
  
Nowadays, a large part of recent research in operator algebras, noncommutative harmonic analysis and noncommutative geometry deals with the study of various properties of the reduced group $C^*$-algebra $C^*_r(G)$ of a discrete group $G$, or its twisted version $C^*_r(G,\sigma)$, where $\sigma$ is a $\Toro$-valued 2-cocycle on $G$. For the sake of clarity, we will only consider the untwisted case in this introduction. But to gain some generality, we will incorporate a 2-cocycle in most of our discussion in the subsequent sections. Recall that if $G$ is a discrete abelian group then, by Fourier transform, $C^*_r(G)$  may be identified with $C(\widehat{G})$, the continuous complex functions on the Pontryagin dual $\widehat G$ of $G$. In the case of the classical heat equation on the circle, one has $G = \Relativi$ and $\widehat{G} = \Toro$, and the solution at any fixed time $t>0$ is a smooth function of the space variable $ x $ in $\Toro$ (hence it has a uniformly convergent Fourier series in $x$). From a modern point of view, a nice feature of the heat kernel is that the assignment $m \mapsto e^{-tm^2}$ is a positive definite function on $\Relativi$ for every $t>0$, which by Schoenberg's theorem is equivalent to the fact that the function   $m\mapsto m^2$ is negative definite on $\Relativi$.  One may now look for suitable reformulations of the heat equation directly in terms of $C^*_r(G)$, thus being meaningful for every (possibly noncommutative) discrete group $G$. In the passage from the classical to the general setting, given a (normalized) negative definite\footnote{We follow \cite{BCR}. Negative definite functions on groups are sometimes called conditionally negative definite functions, or functions (conditionally) of negative type.} function $d:G\to [0, \infty)$, the ordinary Laplacian $\Delta$ has to be replaced with an operator $H^\mathcal{C}_d$ defined on a suitable domain $\mathcal{C}$ in $C_r^*(G)$ and satisfying that $H^\mathcal{C}_d(\lambda(g)) = -d(g) \lambda(g) $ for all $g\in G$, where $\lambda$ denotes the left regular representation of $G$ on $\ell^2(G)$.  An important tool is provided by the theory of multipliers on $G$, i.e., of those functions $\varphi: G\to \Complessi$ satisfying that there exists a bounded linear operator $M_\varphi : C^*_r(G) \to C_r^*(G)$ such that $M_\varphi(\lambda(g)) = \varphi(g) \lambda(g)$ for all $g \in G$. As shown by Haagerup \cite{Haa1}, every positive definite function $\varphi$ on $G$ is a multiplier satisfying that the associated operator $M_\varphi$ is completely positive. In the context of classical Fourier series, multipliers are often called Fourier multipliers and employed for instance in the Fej\'er and the Abel-Poisson summation processes. Needless to say, Fourier series are available also in the setting of reduced group $C^*$-algebras, although convergence w.r.t.~the operator norm for such series have been much less investigated than in the classical situation, one relevant reference here being \cite{BeCo1}.

With this picture in mind, our starting point for the present paper has been to examine to which extent Fourier's approach to solve the heat equation on the circle can be made rigorous when the group $\Relativi$ is replaced by a countably infinite group $G$. More specifically, given a (normalized) negative definite function $d:G\to [0, \infty)$, we consider the natural one-parameter semigroup (``time evolution'') of completely positive maps $\{M_t^d\}_{t\geq 0}$ on $C_r^*(G)$ 
associated to the positive definite functions $e^{-td}$ on $G$, which we apply to some initial datum $x_0$ in $C_r^*(G)$. We then investigate the possibility of expanding the output $u(t)= M_t^d(x_0)$ at each positive time into an operator norm-convergent Fourier series. In doing so, we were somehow inspired by recent advances in the theory of Markov semigroups, noncommutative Dirichlet forms (``energy integrals'') and potential theory (see e.g~\cite{CS, CS2, Cip} and references therein), and to a less extent by a wealth of ideas and techniques around noncommutative geometry and the so-called Baum-Connes conjecture (see \cite{Connes, CCJJV, Val, Laf} as a sample). We recall that the quadratic form $Q(\xi, \eta) := \sum_{g \in G} \overline{\xi(g)} \eta(g)\, d(g)$ on $\ell^2(G)$ turns out to be a Dirichlet form on $C^*_r(G)$ w.r.t.~the canonical trace, as the one-parameter semigroup $\{M^d_t\}_{t\geq 0}$ can be shown to satisfy the required Markovian property. Notice that $\ell := \sqrt{d}$ is then a length function which is also negative definite. If $d$ is proper, and $D_\ell$ denotes the (unbounded) multiplication operator by $\ell$ on $\ell^2(G)$, then $D_\ell$ is an example of a Dirac operator in the framework of Connes' noncommutative geometry \cite{Con2, Connes}. Moreover, the energy function $E(\xi) := Q(\xi, \xi)$ is nothing but $ \langle D_\ell\xi,D_\ell\xi\rangle = \|D_\ell \xi\|^2 = \langle\xi, (D_\ell)^2 \xi\rangle $, i.e., the corresponding expected value at $\xi$ of the associated (unbounded) operator on $\ell^2(G)$ associated with multiplication by $d=\ell^2$.

Now, consider again $x_0 \in C^*_r(G)$ and $u(t) = M^d_t(x_0) \in C^*_r(G)$ for $t\geq 0$. While the uniform convergence of the Fourier series of a function in $C(\Toro)$ is traditionally understood in the conditional sense, it will be more appropriate for us, as in \cite{BeCo1}, to understand convergence of Fourier series in $C^*_r(G)$ as unordered convergence (i.e., unconditional convergence, see Proposition \ref{bdprop})  w.r.t.~operator norm. We let $CF(G)$ denote the subspace of $C_r^*(G)$ consisting of all operators having such a convergent Fourier series. When the initial datum $x_0$ belongs to $CF(G)$, it is not difficult to show that $u(t)$ still belongs to $CF(G)$ at any subsequent time $t>0$. However, the situation becomes more complicated when $x_0 \notin CF(G)$. In this case, it is still conceivable that under suitable circumstances the time evolution will produce some additional regularization (or smoothness) so that $u(t) \in CF(G)$ for all $t >0$, or at least for all $t > t_0$ for some $t_0 \geq 0$. 
To see that this does not always happen, let us assume that $G$ has Kazhdan's property (T) \cite{BHV}.Then, as shown by Akemann-Walter \cite{AW}, every negative definite function on $G$ is necessarily bounded, and this may be used to conclude that there is no way for $M_t^d$ to regularize an ``irregular'' initial datum at any later time, cf.~Corollary \ref{propT2}.These observations seem to suggest an interpretation of groups with property (T) as those for which no 
such regularization process can exist, i.e., as those groups for which Fourier original intuition most likely fails. On the positive side, we have that $u(t) \in CF(G)$ for every choice of $x_0$ as long as $e^{-td} \in \ell^2(G)$ for some $t>0$. Also, if $d$ is proper (so $G$ has the Haagerup property \cite{CCJJV}) and $G$ has exponential H-growth w.r.t.~$d$ (as defined in \cite{BeCo1}), then we get  that $u(t) \in CF(G)$ for all $x_0 \in C^*_r(G)$ and all $t > 0$, cf.~Theorem \ref{subexp}.The class of pairs $(G, d)$ satisfying these last assumptions is quite large. It notably contains $({\mathbb F}_k, |\cdot|)$, where $|\cdot|$ denotes  the canonical word length function on the free group ${\mathbb F}_k$ ($k\geq 2$). In general, letting $MCF(G)$ consists of those multipliers $\varphi$ of $G$  such that $M_\varphi$ maps $C_r^*(G)$ into $CF(G)$, a natural and challenging problem is to find conditions ensuring that  $e^{-td}$ belongs to $MCF(G)$ for some $t>0$ (resp.~for all $t>0$). 

In order to put these considerations on solid grounds, we decided to introduce two properties for countably infinite groups among the several options expressing different flavours of the requirement $u(t) = M_t^d(x_0) \in CF(G)$, i.e., whether for some $d$ this  holds for some (resp.~all) $t>0$, and some (resp.~all) $x_0 \in C_r^*(G)\setminus CF(G)$. Namely, the weakest and the strongest possible choices, called  the {\it weak heat property} and the {\it heat property}, respectively. Groups with property (T) never have the  weak heat property. The converse is open, but we are able to show that a multitude of groups without property (T) do have the weak heat property. Most of these groups are actually finitely generated. Infinitely generated groups seem harder to handle and, for instance, we don't know if $S_\infty$ (the group of finite permutations of $\Naturali$) has the weak heat property. Interestingly, familiar objects like Poincar\'e exponents and Sidon sets play a r\^ole in our investigation of the weak heat property. Although the heat property is  much stronger, quite many groups do have it, e.g., $\Relativi^n$ ($n \geq 1$), finitely generated groups with polynomial growth, non-abelian free groups, and infinite Coxeter groups.  All these groups have the Haagerup property, but the exact relationship between the heat property and the Haagerup property needs to be clarified.  

Next, as an analogue of the Laplacian, we consider the operator $H^\mathcal{C}_d:\mathcal{C} \to C_r^*(G)$ defined by \[H^\mathcal{C}_d(x) = -\sum_{g\in G} d(g) \widehat{x}(g)\lambda(g)\] on the domain $\mathcal{C}= \{ x\in C_r^*(G): \sum_{g\in G} d(g) \widehat{x}(g)\lambda(g) \, \text{is convergent in operator norm}\}$.Then we show in Theorem \ref{heat-solve} that Fourier's intuition was indeed a good one as long as $G$ has the heat property (with respect to~$d$): for every $x_0\in C_r^*(G)$, the natural heat problem $u'(t)= H^\mathcal{C}_d(u(t))$ with initial datum $x_0$ has then a unique solution given by $u(t) = M_t^d(x_0)$ for every $t\geq 0$. Somewhat surprisingly, our proof is not just a simple adaption of the classical argument, but requires some modern tools.

One of our main motivations for writing this article has been to point out a departure from the classical situation.  Namely for a group with property (T) the tentative solution of the heat equation associated to a negative definite function cannot be expanded into an operator norm-convergent Fourier series unless the initial datum itself admits such an expansion. Whether this fact could eventually lead to a characterization of groups with property (T) solely in terms 
of convergence properties of Fourier series remains a challenging problem. We hope this, as well as many other questions scattered in the main body of our paper, will stimulate further interest and investigations.

\bigskip
Our paper is organized as follows. Section 2 contains some preliminaries about negative definite functions on groups, convergence aspects of series in Banach spaces, multipliers and reduced twisted group $C^*$-algebras, decay and growth properties of groups. In the third section we introduce our heat properties, cf.~Definition \ref{weakheat} and Definition \ref{heatproperty}, show that property (T) is an obstruction to the weak heat property in Corollary \ref{propT2}, discuss some general facts about these notions and illustrate both by several examples. Section 4 is devoted to the study of an analogue of the heat equation on a reduced twisted group $C^*$-algebra associated with some negative definite function, where we demonstrate in Theorem \ref{heat-solve} the usefulness of the heat property to guarantee the existence of a solution, regardless of the initial datum. In the final section, we have gathered some further questions and comments, including some indications of possible extensions of our work.

\section{Preliminaries}
Throughout this paper, we assume that $G$ is a countably infinite group with identity $e$, unless otherwise specified.
\subsection{} \label{prelimgr}
We recall that a function $d: G \to \Complessi$ is called \emph{negative definite} if $d(g^{-1}) = \overline{d(g)}$ for all $g \in G$ and, for any $n \in {\mathbb N}$, $g_1,\ldots,g_n \in G$ and $b_1,\ldots,b_n \in \Complessi$ with $\sum_{i=1}^n b_i = 0$, we have
\[\sum_{i,j=1}^n \overline{b_i}b_j \,d(g_i^{-1}g_j) \leq 0\,.\]
Such a function $d$ is said to be \emph{normalized} when $d(e) = 0$. We set
  \[ND_0^+(G) := \big\{d : G \to [0, \infty) : d \text{ is negative definite and }  d(e) = 0\big\}.\]
 As is well known, $ND_0^+(G)$ can be used to characterize some important properties of groups:  
 \begin{itemize}
 \item $G$ has (Kazhdan's) property (T) if and only if every $d\in ND_0^+(G)$ is bounded, see e.g.~\cite{AW, HV, BHV}. 
\item  $G$ has the Haagerup property if and only if there exists some $d\in ND_0^+(G)$ which is \emph{proper}\footnote{A map $c : G\to [0, \infty)$ is said to be proper if the set $\{ g \in G: c(g) \leq t\} $ is finite for every $t\in[0, \infty)$.}, see \cite{CCJJV}. 
\end{itemize}

We will also often consider length functions on $G$: a function $\ell:G \to [0, \infty)$ is called a \emph{length} function  whenever $\ell(e)= 0, \ell(g^{-1}) = \ell(g)$ and $\ell(gh)\leq \ell(g)+\ell(h)$ for all $g, h \in G$. We note that $ND_0^+(G)$ always contains length functions. Indeed, if $d \in ND^+_0(G)$, then $d^{1/2}\in ND_0^+(G)$ and $d^{1/2}$ is a length function on $G$ (cf.~the proof of Proposition 3.3 in \cite{BCR}). 

Another feature of $ND_0^+(G)$ is that it is connected to the representation theory of $G$ via the Delorme-Guichardet theorem: $d \in ND_0^+(G)$ if and only if there exist a unitary (resp.~orthogonal) representation $\pi$ on a complex (resp.~real) Hilbert space $H$ and a $1$-cocycle $b: G \to H$ associated with $\pi$ such that $d(g) = \|b(g)\|^2$ for all $g\in G$. In particular, the maps $g\mapsto \|b(g)\|^2 $ and  $g\mapsto \|b(g)\|$ belong to $ND_0^+(G)$ whenever  $b$ is an homomorphism from $G$ into some Hilbert space (real or complex).

When $G$ acts by isometries on a metric space $(X, \rho)$ and $\ell$ is the (length) function on $G$ given by $\ell(g) = \rho(gx_0, x_0)$ for some choice of base point $x_0\in X$, many authors consider the so-called Poincar\'e exponent of $\ell$.  The definition makes sense for any function $c: G\to [0, \infty)$: we define the \emph{Poincar\'e exponent} $\delta(c) \in [0, \infty]$  of $c$ by 
\[ \delta(c) :=  \inf\big\{s>0 : \sum_{g\in G} e^{-sc(g)} < \infty\}.\]
Poincar\'e exponents have been computed or estimated in many cases, see e.g.~\cite{Patt76, Patterson76, Patt83, Stra04, CTV, AIM, Fra22, MV}. If there exists some $d\in ND_0^+(G)$ such that $\delta(d) < \infty$, then $d$ is proper, hence $G$ has the Haagerup property. Moreover,  if $\delta(d) =0$, then $G$ is amenable. These facts can be deduced from different sources, e.g.~\cite[Theorem 5.3]{GK}, \cite[Theorem 4.1]{CTV} and \cite[Propositions 3.7 and 3.8]{AIM}).

If $c, c': G\to [0, \infty)$, we will write $c \lesssim c'$ whenever there exist some $a, b>0$ and some finite subset $F$ of $G$ such $c(g) \leq a c'(g) + b$ for all $g \in G\setminus F$. The following simple observation will be useful. Assume that $c \lesssim c'$ for some $c: G\to [0, \infty)$ such that $\delta(c) < \infty$. Then an elementary computation gives that $\delta(c') < \infty$: for any $t > a\delta(c)$, we have that
\[\sum_{g\in G\setminus F} e^{-tc'(g)}  \leq \sum_{g\in G} e^{-t (\frac{1}{a}c(g) -b)} =   e^{tb} \sum_{g\in G} e^{- \frac{t}{a}c(g)}  < \infty.\]
Hence we get that $\delta(c') \leq a\delta(c) < \infty$. 

\subsection{} If $S$ is a nonempty set and $\{x_s\} _{s\in S}$ is a family of elements in a normed space $(X, \|\cdot\|)$, 
we will say that the (formal) series $\sum_{s\in S} x_s$ converges to $x \in X$, and write  $\sum_{s\in S} x_s = x$, when the net $\big\{\sum_{s\in F} x_s\big\}_{F\in \mathcal{F}}$ converges to $x$, where $\mathcal{F}$ denotes the set of all finite subsets of $S$, directed by inclusion: in other words, for any $\varepsilon >0$, there exists $F_\varepsilon\in  \mathcal{F}$ such that  $\|x-\sum_{s\in F} x_s \| < \varepsilon$ for every $F\in  \mathcal{F}$ containing $F_\varepsilon$. This notion of convergence is often called unordered convergence. If $X$ is a Banach space, then Cauchy's criterium says that $\sum_{s\in S} x_s$ is convergent if and only for any $\varepsilon >0$, there exists $E_\veps \in  \mathcal{F}$ such that $\| \sum_{s \in F} x_s\| < \veps$ for every $F\in  \mathcal{F}$ which is disjoint from $E_\veps$. Also, $\sum_{s\in S} x_s$ is convergent whenever it is absolutely convergent, i.e., whenever $\sum_{s\in S} \|x_s\|$ is convergent, 
and the converse holds if $X$ is finite-dimensional.

We will repeatedly use the equivalence between conditions $i$) and $v$) in the the following result, connecting unordered convergence and unconditional convergence (see for example~\cite[Section 3.3]{Heil}).
\begin{proposition} \label{bdprop}
Assume that $S$ is a countably infinite set, $X$ is a Banach space, and $\{x_s\} _{s\in S}$ is a family of elements in  $X$. Let $\{s_j\}_{j=1}^\infty$ be any enumeration of $S$ (without repetitions). Then the following statements are equivalent. 

\begin{itemize} 
\item[$i)$] $\sum_{s\in S} x_s$ is convergent.
\item[ii$)$] $\sum_{j\in \Naturali} x_{s_{j}}$ is convergent.
\item[iii$)$] 
$\sum_{j=1}^\infty x_{s_{j}}$ is \emph{unconditionally convergent}, that is, 
  $\sum_{j=1}^\infty x_{s_{\sigma(j)}}=\lim_{n\to \infty} \sum_{j=1}^n x_{s_{\sigma(j)}}$
 exists for every permutation $\sigma$ of $\Naturali$. 
 \item[iv$)$]  $\sum_{j=1}^\infty c_j\, x_{s_j} =\lim_{n\to \infty} \sum_{j=1}^n c_j \,x_{s_j}$ 
 exists for every bounded sequence $\{c_j\}_{j=1}^\infty$ in $\Complessi$. 
\item[v$)$]  $\sum_{s\in S} \varphi(s) x_s$ is  convergent for every $\varphi\in \ell^\infty(S)$.
\end{itemize} 
\end{proposition}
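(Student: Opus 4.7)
The plan is to establish the equivalences through the circular chain (i) $\Rightarrow$ (iii) $\Rightarrow$ (iv) $\Rightarrow$ (v) $\Rightarrow$ (i), and to note separately that (i) $\Leftrightarrow$ (ii). The latter is immediate: an enumeration $\{s_j\}$ of $S$ is just a bijection $\Naturali \to S$ that identifies the net of finite subsets of $S$, directed by inclusion, with the net of finite subsets of $\Naturali$, so both unordered nets converge to the same limit in $X$ if either one does.

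For (i) $\Rightarrow$ (iii), given a permutation $\sigma$ of $\Naturali$, I would note that the initial segments $F_n := \{s_{\sigma(1)}, \ldots, s_{\sigma(n)}\}$ exhaust $S$; so given $\varepsilon > 0$, choosing $N$ with $F_N \supseteq F_\varepsilon$ (the finite set from the Cauchy-type characterisation of (i)) gives $\|\sum_{j=1}^n x_{s_{\sigma(j)}} - x\| < \varepsilon$ for all $n \geq N$.

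The core technical content is (iii) $\Rightarrow$ (iv), a classical bounded multiplier theorem for Banach spaces. I would first upgrade (iii) to the Cauchy-type statement that for every $\varepsilon > 0$ there exists $N$ such that $\|\sum_{j \in A} x_{s_j}\| < \varepsilon$ for every finite $A \subseteq \{N+1, N+2, \ldots\}$; the contrapositive construction is standard: if this failed, one could build iteratively a permutation along which the sequential partial sums are not Cauchy, contradicting (iii). Then, given a bounded sequence $\{c_j\}$ with $|c_j| \leq M$, one decomposes $c_j$ into four nonnegative real pieces (real and imaginary, positive and negative parts), each at most $M$, and controls each nonnegative piece $\{a_j\}$ on a finite block $B \subseteq \{N+1, N+2, \ldots\}$ via a summation-by-parts argument that bounds $\|\sum_{j \in B} a_j x_{s_j}\|$ by $M$ times the supremum of $\|\sum_{j \in A} x_{s_j}\|$ over subsets $A$ of $B$. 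Combining the four pieces yields a Cauchy bound of order $4M\varepsilon$ for the partial sums of $\sum_j c_j x_{s_j}$, so they converge.

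For (iv) $\Rightarrow$ (v), I would fix $\varphi \in \ell^\infty(S)$, set $c_j := \varphi(s_j)$, and apply (iv) both to $\{c_j\}$ and to the truncated sequences $\{\mathbf{1}_A(j) c_j\}$ for arbitrary $A \subseteq \Naturali$, obtaining subseries convergence of $\sum_j \varphi(s_j) x_{s_j}$; the Cauchy criterion established above then upgrades this to unordered convergence of $\sum_{s \in S} \varphi(s) x_s$. Finally (v) $\Rightarrow$ (i) is trivial on taking $\varphi \equiv 1$. The main obstacle is the bounded-multiplier step (iii) $\Rightarrow$ (iv), requiring both the Cauchy upgrade and the decomposition/summation-by-parts estimate; as this is a standard result, the proof can simply invoke it, following \cite[Section~3.3]{Heil}, alongside the elementary observations above.
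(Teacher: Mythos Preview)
Your proposal is correct and follows essentially the same route as the paper: both defer the core equivalences among (ii), (iii), (iv) to \cite[Theorem~3.3]{Heil}, treat (i)\,$\Leftrightarrow$\,(ii) and (v)\,$\Rightarrow$\,(i) as elementary, and then supply the remaining link to (v). The paper's argument for (i)\,$\Rightarrow$\,(v) is marginally slicker than your (iv)\,$\Rightarrow$\,(v) via subseries convergence: it simply observes that $\{d_j\varphi(s_j)\}$ is bounded whenever $\{d_j\}$ is, so (iv) for the family $\{x_s\}$ immediately yields (iv)---and hence (i)---for the family $\{\varphi(s)x_s\}$.
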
 
\begin{proof} 
The equivalence between $i)$ and $ii)$ is an easy exercise. The equivalences between $ii),\,iii)$ and $iv)$ are  shown in \cite[Theorem 3.3]{Heil}. 
The implication $v)\Rightarrow i)$ is obvious. Finally, assume that $i)$ holds and let $\varphi\in \ell^\infty(S)$. To show that $\sum_{s\in S} \varphi(s) x_s$ converges, it suffices to show that $\sum_{j=1}^\infty d_j\, \varphi(s_j)x_{s_{j}}$ exists for every bounded sequence $\{d_j\}_{j=1}^\infty$ in $\Complessi$. But, as the sequence $\{d_j\, \varphi(s_j)\}_{j=1}^\infty$ is bounded  for any such choice of $\{d_j\}_{j=1}^\infty$, this assertion follows from the convergence of the series $\sum_{s\in S} x_s$.
\end{proof} 
\subsection{} 
Our notation and terminology concerning reduced twisted group $C^*$-algebras will essentially be as in \cite{BeCo1}. For the ease of the reader, we recall some of the definitions and results that we will use. Let $\sigma \in Z^2(G, \Toro)$, where $Z^2(G, \Toro)$ denotes the group of normalized $2$-cocycles on $G$ with values in the unit circle $\Toro$. When $\sigma$ is trivial, that is, $\sigma= 1$, we skip it everywhere in our notation. 

The left regular $\sigma$-projective representation $\Lambda_\sigma$ of $G$ on $\ell^2(G)$ is defined by 
 \[ (\Lambda_\sigma(g) \xi) (h) = \sigma(g,g^{-1}h)\, \xi(g^{-1} h), \ \ \xi \in
\ell^2(G),\ g, h \in G.\]
Letting $\{\delta_{h}\}_{h \in G}$ denote the canonical basis of $\ell^2(G)$, we have 
 \[\Lambda_\sigma(g) \delta_{h} = \sigma(g,h) \delta_{gh}, \quad g, h \in G.\]
In particular, $\Lambda_\sigma(g) \delta = \delta_{g}$ for all $g\in G$, where $\delta:= \delta_e$. When $\sigma=1$, we recover the left regular representation of $G$ on $\ell^2(G)$, which we will denote by $\lambda$.
 
As usual, $\B(\ell^2(G))$ denotes the space of all bounded linear operators from $\ell^2(G)$ into itself, and, unless otherwise specified, $\|\cdot\|$ denotes the operator norm on $\B(\ell^2(G))$.The \emph{reduced twisted group $C^*$-algebra} $C_{r}^{*}(G, \sigma)$ (resp.\ the \emph{twisted group von Neumann algebra} $\rm{vN}(G, \sigma)$) is the $C^*$-subalgebra (resp.\ von Neumann subalgebra) of $\B(\ell^2(G))$ generated by $\Lambda_{\sigma}(G)$. In other words,  $C_{r}^{*}(G, \sigma)$ (resp.\ $\rm{vN}(G, \sigma)$) is the closure in the operator norm topology (resp.\ the weak operator topology) of the $*$-algebra $\Complessi(G,\sigma) :=$Span$(\Lambda_{\sigma}(G)).$

The canonical (faithful normal) tracial state $\tau$ on  $\rm{vN}(G,\sigma)$ is given by $\tau(x) = \langle x\delta, \delta\rangle$, and the associated norm on  $\rm{vN}(G,\sigma)$ is defined by $\|x\|_\tau= \tau(x^*x)^{1/2}$. The map  \[x \mapsto \widehat{x}:= x \delta \] is an injective linear map from  $\rm{vN}(G,\sigma)$ into $\ell^2(G)$, satisfying that $\tau(x) = \widehat x(e)$ and $\|x\|_\tau = \|\widehat x\|_2 \leq \|x\|$. Moreover, we have
  \[\widehat{x}(g)= \tau\big(x \Lambda_{\sigma}(g)^*\big), \quad g \in G.\]
 The  \emph{Fourier series} of $ x \in\rm{vN}(G, \sigma)$ is the series $\sum_{g\in G} \widehat{x}(g)\Lambda_{\sigma}(g)$. It always converges to $x$ w.r.t.\  $\|\cdot\|_\tau$. However, when $x \in C_{r}^{*}(G, \sigma)$, this series is not necessarily convergent w.r.t.~operator norm. We therefore set \[CF(G,\sigma):= \Big\{ x \in C^*_{r}(G,\sigma) : \sum_{g\in G} \widehat{x}(g)\Lambda_{\sigma}(g) \ \text{is convergent in operator norm} \Big\}.\]
Although we won't need this fact in the sequel, we mention that $CF(G,\sigma)$ becomes a Banach space w.r.t.~to the norm given by 
\[\|x\|_{\mathcal{F}}:=\sup\Big\{ \,\Big\| \sum_{g\in F} \widehat{x}(g) \Lambda_\sigma(g)\Big\| : F\in \mathcal{F}\Big\}\]
where the set $\mathcal{F}$ consists of all finite subsets of $G$. (This can for example be deduced from \cite[p.~231]{Bac}.)
Moreover, as shown by Bo\.{z}ejko in \cite[Theorem 1.2]{Boz} when $\sigma=1$, the space $CF(G)$ is a Banach $*$-algebra,  and $\|\cdot\|_{\mathcal{F}}$ is equivalent to the norm $\|\cdot\|_U$ on $CF(G)$ given by 
\[ \|x\|_U :=  \Big\| \sum_{g\in G} |\widehat{x}(g)| \,\lambda (g)\Big\|.\]
As a Banach algebra w.r.t.~$\|\cdot\|_U$, $CF(G)$ coincides with the algebra $\mathcal{A}_{\rm max}(G)$ considered by V.~Lafforgue in \cite{Laf}, where it plays an important role in his approach to prove the Baum-Connes conjecture for a certain class of groups, cf.~\cite[Corollaire 0.0.3]{Laf}. This gives us the opportunity to illustrate that using unordered (= unconditional) convergence in the definition of $CF(G)$ is primordial: it is for example known, cf.~\cite{Kah}, that the space \[\big\{ x \in C_r^*(\Relativi) : \lim_{n\to \infty} \sum_{m=-n}^n \widehat{x}(m) \lambda(m)\, \,\text{exists w.r.t.~operator norm}  \big\}\] is not an algebra. 

We also note that Bo\.{z}ejko has shown that $CF(G) \neq C_r^*(G)$ (cf.~\cite[Proposition 3.6]{Boz}). Most probably, it is also true that $CF(G,\sigma)\neq C_r^*(G, \sigma)$ for every $\sigma$ in $Z^2(G, \Toro)$. When $G$ is not periodic, i.e., contains at least one element of infinite order, this can be seen as follows. Since $G$ then contains a copy of $\Relativi$, and every $2$-cocycle on $\Relativi$ is similar to the trivial one, we get that
 $C(\Toro)\simeq C_r^*(\Relativi) \simeq  C_r^*(\Relativi, \sigma)$ may be identified with a $C^*$-subalgebra of $C_r^*(G, \sigma)$. By considering a function in $C(\Toro)$ whose Fourier series is not uniformly convergent, one easily deduces that $C_r^*(G, \sigma)$ contains at least one element not belonging to $CF(G, \sigma)$.  
 
Next, we set \[W(G, \sigma) := \{ x \in C^*_{r}(G,\sigma) : \widehat{x} \in \ell^1(G)\},\] which  is easily seen to be a $*$-subalgebra of $C_r^*(G, \sigma)$ satisfying that 
\[ \Complessi (G, \sigma) \subseteq W(G, \sigma) \subseteq CF(G, \sigma).\]
As is well-known, $W(G, \sigma)$ is a Banach $*$-algebra w.r.t.~to the norm $x\mapsto \|\widehat{x}\|_1$  and $W(G, \sigma)$ is isometrically $*$-isomorphic to the twisted convolution algebra $\ell^1(G, \sigma)$. In particular,  $W(\Relativi)\simeq \ell^1(\Relativi)$ corresponds to the classical Wiener algebra. 
  Another result of Bo\.{z}ejko says that the equality $W(G)= CF(G)$ holds if and only if $G$ is amenable,  cf.~\cite[Proposition 1.3]{Boz}. 
 Whether this equivalence also holds in the twisted case is unclear to us.

Let $\varphi\in \ell^\infty(G)$. We recall that $\varphi$ is called a $\sigma$-\emph{multiplier} on $G$, i.e., $\varphi \in MA(G, \sigma)$, whenever there exists a (necessarily unique) bounded linear operator $M_\varphi: C_r^*(G, \sigma) \to C_r^*(G, \sigma)$ such that
\[M_\varphi(\Lambda_\sigma(g)) = \varphi(g)\Lambda_\sigma(g)\quad \text{for all } g\in G.\]  
 Arguing as Haagerup-de Canni\`ere in their proof of \cite[Proposition 1.2]{DCHa}, one gets that $\varphi  \in MA(G,\sigma)$ if and only if 
there exists a (necessarily unique) normal operator $\widetilde{M}_{\varphi}$ from ${\rm vN}(G, \sigma)$ into itself such that
$\widetilde{M}_\varphi(\Lambda_\sigma(g)) = \varphi(g)\Lambda_\sigma(g)$ for all  $g \in G$, in which case we have 
$\| \widetilde{M}_{\varphi} \| = \| M_{\varphi}\|$.

For $x \in C^*_r(G, \sigma)$ and $g\in G$ we  have that $\widehat{M_\varphi(x)}(g) = \varphi(g)\,\widehat{x}(g)$, so the Fourier series of $M_\varphi(x) $ is given by \[\sum_{g\in G} \varphi(g)\widehat{x}(g)\Lambda_{\sigma}(g).\] 
Moreover, setting
 \[ MCF(G,\sigma):= \big\{ \varphi \in MA(G, \sigma) :  M_{\varphi}  \ \text{maps}\ C^*_{r}(G,\sigma) \ \text{into} \ CF(G,\sigma) \big\}, \]
 we get that \[M_{\varphi}(x)= \sum_{g\in G} \varphi(g)\widehat{x}(g)\Lambda_{\sigma}(g) \ \text{(w.r.t.~operator norm)}\]
 for all $ x \in C^*_{r}(G, \sigma)$ whenever $\varphi \in MCF(G,\sigma)$. 
 
It is easy to see that if $\varphi \in  \ell^2(G)$, then $M_\varphi$ maps $ C^*_{r}(G,\sigma) \ \text{into} \ W(G,\sigma)$, so we have $\ell^2(G) \subseteq MCF(G, \sigma)$, cf.~\cite[p.~356-357]{BeCo1}. One may also produce elements in $MCF(G, \sigma)$ when $G$ satisfies some suitable decay property.
Let  $\kappa: G \to [1,\infty)$. We recall that $G$ is said to be \emph{$\kappa$-decaying} if there exists some $C>0$ such that  
\[ \|x\| \, \leq \,C \,\|\widehat{x}\kappa\|_{2}\]
for all $x\in \Complessi(G)$. Then the following result holds, cf.~\cite[Proposition 4.8]{BeCo1}: 
 \begin{proposition}\label{kappaMCF}
 Assume $G$ is $\kappa$-decaying for some $\kappa: G \to [1,\infty)$. Let $\psi:G\to \Complessi$ be such that  $\psi \kappa$ is bounded.
 Then $\psi \in MCF(G, \sigma)$.
 \end{proposition}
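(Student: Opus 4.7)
The plan is to exploit the decay hypothesis to control the norm of any partial Fourier sum of $M_\psi(x)$ by an $\ell^2$-tail of $\widehat{x}$, and then invoke Cauchy's criterion for unordered convergence. Write $K := \|\psi\kappa\|_\infty < \infty$, and note that since $\kappa \geq 1$ we have $\|\psi\|_\infty \leq K$ as well.

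First, I would observe (or verify) that the $\kappa$-decay inequality $\|y\| \leq C\|\widehat{y}\kappa\|_2$ is available for finitely supported $y$ in $\Complessi(G,\sigma)$, with the same constant (or at most a modified one) as in the untwisted setting; this is the point where some care may be needed, as the stated definition of $\kappa$-decay uses $\Complessi(G)$, and one has to pass to the twisted context, e.g.~via a cocycle absorption argument along the lines of \cite{BeCo1}. Granting this, for any $x\in C_r^*(G,\sigma)$ and any finite $F\subseteq G$, the element $y_F := \sum_{g\in F}\psi(g)\widehat{x}(g)\Lambda_\sigma(g)$ lies in $\Complessi(G,\sigma)$ and satisfies
\[
\|y_F\| \;\leq\; C\Big(\sum_{g\in F} |\psi(g)\kappa(g)|^2\,|\widehat{x}(g)|^2\Big)^{1/2} \;\leq\; CK\Big(\sum_{g\in F} |\widehat{x}(g)|^2\Big)^{1/2}.
\]

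Since $\widehat{x} \in \ell^2(G)$ (because $\|\widehat{x}\|_2 = \|x\|_\tau \leq \|x\|$), for every $\varepsilon > 0$ there is a finite set $E_\varepsilon \subseteq G$ such that $\big(\sum_{g\in G\setminus E_\varepsilon} |\widehat{x}(g)|^2\big)^{1/2} < \varepsilon/(CK)$. For every finite $F\subseteq G$ disjoint from $E_\varepsilon$, the estimate above gives $\|y_F\| < \varepsilon$. By the Cauchy criterion for unordered convergence in the Banach space $C_r^*(G,\sigma)$, the series $\sum_{g\in G}\psi(g)\widehat{x}(g)\Lambda_\sigma(g)$ converges in operator norm; call its sum $M_\psi(x)$, which a priori lies in $CF(G,\sigma)$.

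It remains to check that $\psi \in MA(G,\sigma)$ with the operator $M_\psi$ just constructed. Taking $F$ arbitrary in the above bound and passing to the limit shows $\|M_\psi(x)\| \leq CK\,\|\widehat{x}\|_2 \leq CK\,\|x\|$, so $M_\psi$ is a bounded linear map on $C_r^*(G,\sigma)$. Finally, for $x = \Lambda_\sigma(h)$ one has $\widehat{x}(g) = \delta_{g,h}$, hence $M_\psi(\Lambda_\sigma(h)) = \psi(h)\,\Lambda_\sigma(h)$ for every $h\in G$. Thus $\psi \in MA(G,\sigma)$, and since by construction $M_\psi$ maps $C_r^*(G,\sigma)$ into $CF(G,\sigma)$, we conclude $\psi \in MCF(G,\sigma)$. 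The only genuine difficulty in this plan is the transfer of the $\kappa$-decay estimate from the untwisted to the twisted setting; everything else is a direct $\ell^2$-tail argument.
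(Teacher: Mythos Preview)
Your argument is correct and is essentially the natural one: bound the norm of any finite partial sum $\sum_{g\in F}\psi(g)\widehat{x}(g)\Lambda_\sigma(g)$ via the $\kappa$-decay inequality and the boundedness of $\psi\kappa$, reduce to an $\ell^2$-tail of $\widehat{x}$, and apply the Cauchy criterion. The paper itself does not prove the proposition but cites \cite[Proposition~4.8]{BeCo1}; the proof there follows the same route you outline. The one point you flag as needing care---passing from the untwisted decay hypothesis on $\Complessi(G)$ to an estimate on $\Complessi(G,\sigma)$---is handled in \cite{BeCo1} via the elementary inequality $\big\|\sum_{g\in F} c_g\,\Lambda_\sigma(g)\big\| \leq \big\|\sum_{g\in F} |c_g|\,\lambda(g)\big\|$, valid for any finite $F$ and any coefficients $c_g$, which immediately transfers the $\kappa$-decay bound to the twisted setting with the same constant $C$.
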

 
\begin{remark}
Clearly,  for $\kappa$ and $\psi$ as in Proposition \ref{kappaMCF}, we have $\psi \in c_0(G)$ whenever $\kappa $ is proper, which will always be the case in our later applications of Proposition \ref{kappaMCF}. We actually don't know if $MCF(G, \sigma)$ is always contained in $c_0(G)$.
 \end{remark}
  
Proposition \ref{kappaMCF} is a strengthening of a result due to Haagerup \cite{Haa1}, where he considers the free group $\mathbb{F}_k$ on $k$ generators for some $k\geq 2$ and  the canonical word length function $L=|\cdot|$ on $\mathbb{F}_k$. After showing that $\mathbb{F}_k$ is $(1+L)^2$-decaying, he proves that any function $\psi: \mathbb{F}_k \to \Complessi$  satisfying that $\psi (1+L)^{2}$ is bounded gives a multiplier on $\mathbb{F}_k$. Proposition \ref{kappaMCF} tells us  that every function $\psi$ in this class  actually belongs  to  $MCF(\mathbb{F}_k)$. As there exist such functions which do not lie in $\ell^2(\mathbb{F}_k)$ (cf.~Example \ref{free-gp2}), we have that $\ell^2(\mathbb{F}_k) \neq MCF(\mathbb{F}_k)$. To our knowledge, not much is known about infinite groups $G$ satisfying that $\ell^2(G) = MCF(G)$ (but see Remark \ref{poincare-exp}).

We will also need the concept of H-growth for groups introduced in \cite{BeCo1}, which relies on the following notion
of size. Let $E$ be a non-empty finite subset of $G$. The {\it Haagerup content} of $E$ is defined as
\[c(E) := \sup\Big\{\|x\| : x \in \Complessi(G),\, {\rm supp}(\widehat{x}) \subseteq E, \ \|\widehat{x}\|_2 \leq 1\Big\} \,.\]
We then have $1 \leq c(E) \leq |E|^{1/2}$, and $c(E) = |E|^{1/2}$ whenever  $G$ is amenable.

Assume $L: G \to [0,+\infty)$ is a proper map such that $L(e) = 0$. Then $G$ is said to have {\it polynomial H-growth} (w.r.t.~$L$)
whenever there exist positive constants $K,p$ such that \[c\big(\{g \in G \ | \ L(g) \leq r\}\big) \leq \, K\, (1+r)^{p}\]
for all $ r \geq 0 $. Similarly, $G$ is said to have {\it subexponential H-growth} (w.r.t.~$L$) if, for any $b>1$, there exists  $r_1\geq 0$ such that
\[c\big(\{g \in G \ | \ L(g) \leq r\}\big) < \,b^r \quad \text{for all }  r \geq r_1.\]
These definitions are analogous to the classical definitions of polynomial/subexponential growth (w.r.t.~$L$), where one uses the cardinality of a set instead of its Haagerup content to measure its size.
 
Note that if $G$ has polynomial (resp.~subexponential) growth (w.r.t.\ $L$), then $G$ has also polynomial (resp.~subexponential) H-growth (w.r.t.~$L$).
Moreover, if  $G$ is amenable, then polynomial (resp.~subexponential) H-growth (w.r.t.\ $L$) reduces to polynomial (resp.~subexponential) growth (w.r.t.\ $L$). 
When $G$ is finitely generated, the function $L$ is frequently chosen to be a word length function w.r.t.~some finite symmetric generator set.
The specific choice of word length function on $G$ is then irrelevant, and usually dropped from the notation. As is well-known, finitely generated groups with subexponential growth are amenable, see for example \cite{DH}.
 
 \section{Some heat properties for groups}

  Throughout this section, we assume that $G$ is a countably infinite group, $\sigma \in Z^2(G, \Toro)$, and $d \in ND_0^+(G)$, unless otherwise specified.
\subsection{The weak heat property}\label{WHP}
The following observation will be useful for our discussion.
 \begin{proposition}\label{CFG2}
Let $\varphi \in MA(G, \sigma)$. Then $M_\varphi$ maps $CF(G,\sigma)$ into itself.
\end{proposition}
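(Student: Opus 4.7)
The plan is to combine the hypothesis $x \in CF(G,\sigma)$ (which by definition gives operator-norm unordered convergence of the Fourier series $\sum_{g\in G}\widehat{x}(g)\Lambda_\sigma(g)$) with the characterization of unordered convergence from Proposition \ref{bdprop}, and then use continuity of $M_\varphi$ to identify the resulting limit.

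First, I would recall that every $\varphi \in MA(G,\sigma)$ is by definition an element of $\ell^\infty(G)$. Thus, applying the equivalence $i) \Leftrightarrow v)$ of Proposition \ref{bdprop} to the family $\{\widehat{x}(g)\Lambda_\sigma(g)\}_{g \in G}$ in the Banach space $C^*_r(G,\sigma)$, the operator-norm convergence of $\sum_{g\in G} \widehat{x}(g)\Lambda_\sigma(g)$ immediately implies the operator-norm convergence of
\[\sum_{g\in G} \varphi(g)\, \widehat{x}(g)\,\Lambda_\sigma(g).\]

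Next, I would identify this limit with $M_\varphi(x)$. Since $M_\varphi$ is a bounded linear operator on $C^*_r(G,\sigma)$, applying it to the net of finite partial sums gives
\[M_\varphi(x) = \lim_{F \in \mathcal{F}} M_\varphi\Big(\sum_{g\in F} \widehat{x}(g)\Lambda_\sigma(g)\Big) = \lim_{F \in \mathcal{F}} \sum_{g\in F} \varphi(g)\,\widehat{x}(g)\,\Lambda_\sigma(g),\]
where the second equality uses linearity of $M_\varphi$ and the defining property $M_\varphi(\Lambda_\sigma(g)) = \varphi(g)\Lambda_\sigma(g)$. (Alternatively, I could simply note that $\widehat{M_\varphi(x)}(g) = \varphi(g)\widehat{x}(g)$, as already observed in the preliminaries, so that the above series is exactly the Fourier series of $M_\varphi(x)$.) This shows that $M_\varphi(x) \in CF(G,\sigma)$.

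There is essentially no hard step here: the whole argument is a clean application of Proposition \ref{bdprop} once one notices that multiplication by a bounded function preserves unordered convergence. The only minor point to keep straight is that although $\varphi$ in the statement plays the role of a scalar sequence in $\ell^\infty(G)$ (as in $v)$ of Proposition \ref{bdprop}), the conclusion needs to be phrased in terms of the operator $M_\varphi$, which is done via the continuity argument above.
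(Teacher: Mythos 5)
Your proof is correct and coincides with the first route the paper itself takes: since $\varphi \in MA(G,\sigma) \subseteq \ell^\infty(G)$, the equivalence $i) \Leftrightarrow v)$ of Proposition \ref{bdprop} applied to the family $\{\widehat{x}(g)\Lambda_\sigma(g)\}_{g\in G}$ yields operator-norm convergence of $\sum_{g\in G}\varphi(g)\widehat{x}(g)\Lambda_\sigma(g)$, which is precisely the Fourier series of $M_\varphi(x)$ because $\widehat{M_\varphi(x)}(g)=\varphi(g)\widehat{x}(g)$. The paper additionally records an elementary alternative via Cauchy's criterium and the bound $\|M_\varphi\|$, but your argument is exactly its primary one, so nothing is missing.
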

\begin{proof}
We may assume that $\varphi \neq 0$. Let $x \in CF(G, \sigma)$. We have to show that the Fourier series of $M_\varphi(x)$
is convergent w.r.t.~operator norm. Since $\varphi$ is bounded, this follows from Proposition 1.1. This can also be proven in an elementary way.

Indeed, let $\varepsilon > 0$ and set $M:= \|M_\varphi\| >0$. Since $x \in CF(G, \sigma)$, Cauchy's criterium gives that there exists a finite set $F_\veps\subseteq G$ such that
\[\Big\| \sum_{g \in F} \widehat{x}(g) \Lambda_\sigma(g)\Big\| < \veps/M\]
for all finite subsets $F$ of $G$ disjoint from $F_\veps$.Thus we get that
\[\Big\|\sum_{g \in F} \varphi(g)\widehat{x}(g) \Lambda_\sigma(g)\Big\| = \Big\| M_\varphi\Big(\sum_{g \in F} \widehat{x}(g) \Lambda_\sigma(g)\Big)\Big\| 
\leq M \, \Big\| \sum_{g \in F} \widehat{x}(g) \Lambda_\sigma(g)\Big\|  < \veps  \]
for all finite subsets $F$ of $G$ disjoint from $F_\veps$.  Cauchy's criterium gives the desired conclusion.
\end{proof}

\begin{remark} \label{ell-inf}
Let $\varphi \in \ell^{\infty}(G)$. If $x \in CF(G, \sigma)$, then Proposition \ref{bdprop} gives that the series $\sum_{g \in G} \varphi(g) \widehat{x}(g) \Lambda_\sigma(g)$ is convergent w.r.t.~operator norm. This means that the assignment $x \mapsto   \sum_{g \in G} \varphi(g) \widehat{x}(g) \Lambda_\sigma(g)$ is a well-defined linear map from $CF(G, \sigma)$ into itself. However, this map can only be extended to a bounded linear map from $C_r^*(G, \sigma)$ into itself when $\varphi$ belongs to $MA(G, \sigma)$, in which case it coincides with $M_\varphi$.
\end{remark} 

Set now $\varphi_t := e^{-t d}$ for every $t\in [0, \infty)$.  By Schoenberg's theorem, 
  each $\varphi_t$ is positive definite, hence belongs to $MA(G, \sigma)$. Moreover, setting $M^d_t := M_{\varphi_t}$ for each $t\geq 0$, 
   $\{M_t^d\}_{t\geq 0}$ is a semigroup of contractions, consisting of unital completely positive maps on $C_r^*(G, \sigma)$ (see~\cite{Haa1}, and  \cite{BeCo1} for the twisted case).

Let $x_0 \in C_r^*(G, \sigma)$ (thinking of it as some initial element). For each $t \geq 0$, set \[x(t) := M^d_t(x_0) \in C_r^*(G,\sigma).\] Then $x(0) = x_0$ and the Fourier series of $x(t)$ is \,$ \sum_{g \in G} e^{-td(g)}\, \widehat{x}(g) \Lambda_\sigma(g)$.
    
 \begin{proposition}\label{CFt1}
 Assume that   $x(t_0) \in CF(G, \sigma)$ for some $t_0 \geq 0$. Then $x(t) \in CF(G, \sigma)$ for all $t\geq t_0$. In particular, if $x_0 = x(0) \in CF(G, \sigma)$, then $x(t) \in CF(G, \sigma)$ for all $t\geq 0$.

\smallskip \noindent 
Moreover, if $0< s\leq t$ and $\varphi_{s} \in MCF(G, \sigma)$, then $\varphi_t \in MCF(G, \sigma)$.
\end{proposition}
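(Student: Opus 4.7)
The strategy is to exploit two facts already at our disposal: the semigroup identity $\varphi_{t+s} = \varphi_t\,\varphi_s$ (equivalently $M^d_{t+s} = M^d_t \circ M^d_s$), and the invariance of $CF(G,\sigma)$ under any multiplier coming from $MA(G,\sigma)$, just established in Proposition \ref{CFG2}. Note that every $\varphi_r = e^{-rd}$ is positive definite by Schoenberg's theorem, so by Haagerup's result it lies in $MA(G,\sigma)$ and Proposition \ref{CFG2} applies to each $M^d_r$.

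For the first assertion, given $t \geq t_0$ use the semigroup property to write
\[
x(t) \,=\, M^d_t(x_0) \,=\, M^d_{t-t_0}\bigl(M^d_{t_0}(x_0)\bigr) \,=\, M^d_{t-t_0}(x(t_0)).
\]
Since $x(t_0) \in CF(G,\sigma)$ by assumption and $M^d_{t-t_0}$ sends $CF(G,\sigma)$ into itself by Proposition \ref{CFG2}, we conclude that $x(t) \in CF(G,\sigma)$. The particular case $t_0 = 0$ is an immediate consequence.

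For the second assertion, let $0 < s \leq t$ with $\varphi_s \in MCF(G,\sigma)$ and pick any $x \in C^*_r(G,\sigma)$. Factorise $M^d_t = M^d_{t-s}\circ M^d_s$. By the hypothesis $\varphi_s \in MCF(G,\sigma)$ we have $M^d_s(x) \in CF(G,\sigma)$, and applying Proposition \ref{CFG2} to the multiplier $\varphi_{t-s}$ gives $M^d_t(x) = M^d_{t-s}(M^d_s(x)) \in CF(G,\sigma)$. Hence $\varphi_t \in MCF(G,\sigma)$.

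No substantive obstacle arises here: both statements reduce to a one-line semigroup factorisation of $M^d_t$ combined with the invariance of $CF(G,\sigma)$ under arbitrary multipliers. The only mild verification is that each $\varphi_r$ does belong to $MA(G,\sigma)$, which is immediate from Schoenberg plus Haagerup, as recalled just above the statement.
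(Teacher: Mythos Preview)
Your proof is correct and follows essentially the same approach as the paper: both arguments rest on the semigroup factorisation $M^d_t = M^d_{t-t_0}\circ M^d_{t_0}$ together with Proposition~\ref{CFG2}, and the paper handles the second assertion with the same idea (writing only ``follows in a similar way'').
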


\begin{proof}
Let $t \geq t_0$ and set $u= t-t_0$. Then  $x(t)= M^d_u(M^d_{t_0}(x_0)) = M_{\varphi_u}(x(t_0)) \in CF(G, \sigma)$ by Proposition \ref{CFG2}. The last assertion follows in a similar way. 
\end{proof}

Set \[I^{d,\sigma}_{x_0}:=\big\{t \geq 0 \mid x(t) = M_t^d(x_0)\in CF(G, \sigma)\big\} \subseteq [0, \infty).\] Then Proposition \ref{CFt1} gives that  $I^{d,\sigma}_{x_0}=[0, \infty)$ whenever $x_0 \in CF(G, \sigma)$. Moreover, set
 \[\E({d, \sigma}):=\big\{t > 0 \mid \ e^{-td} \in MCF(G, \sigma)\big\}, \, \text{and}\]
  \[\epsilon(d, \sigma):= \inf \E({d, \sigma}) \in [0, \infty].\] 
 When $\sigma =1$, we just write $\epsilon(d)$, which  may be thought of as some new kind of Poincar\'e exponent. Since $\ell^2(G) \subseteq MCF(G, \sigma)$,  we have $ \epsilon(d, \sigma) \leq \delta(d)/2$, with equality when $\ell^2(G)=MCF(G, \sigma)$ (see Remark \ref{poincare-exp} when $\sigma=1$). 
 
If $\E({d, \sigma})$ is non-empty, i.e., $\epsilon(d, \sigma) < \infty$, then  Proposition \ref{CFt1} gives that $\E({d, \sigma})$ is of the form $(\epsilon(d, \sigma), \infty)$ or $[\epsilon(d, \sigma), \infty)$, and it then follows that  $[t, \infty) \subseteq I^{d,\sigma}_{x_0}$ for every $x_0\in C_r^*(G, \sigma)$ whenever $t >\epsilon(d, \sigma)$.
 
  \begin{proposition}\label{propT}
Suppose that $d$ is bounded. Then the following conditions are equivalent:
\begin{itemize}
\item[i$)$] $x_0 \in CF(G, \sigma)$.
\item[ii$)$] $x(t) \in CF(G, \sigma)$ for some $t >0$.
\item[iii$)$] $x(t) \in CF(G, \sigma)$ for all $t \geq 0$.
\end{itemize}
It follows that $I^{d,\sigma}_{x_0} = \emptyset$ for every $x_0 \not\in CF(G, \sigma)$. Hence, $\epsilon(d, \sigma) = \infty$ if  $\sigma =1$  or $G$ is not periodic. 
\end{proposition}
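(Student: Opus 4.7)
The plan is to establish the equivalence i) $\Leftrightarrow$ ii) $\Leftrightarrow$ iii) as the main step, and then to read off the two consequences. The implication i) $\Rightarrow$ iii) is already given by Proposition \ref{CFt1}, and iii) $\Rightarrow$ ii) is immediate since $0 < t$ is allowed. So the only real content is ii) $\Rightarrow$ i), and this is where the boundedness hypothesis on $d$ enters.

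For ii) $\Rightarrow$ i), I would argue as follows. Fix $t > 0$ for which $x(t) \in CF(G, \sigma)$; by definition, the series
\[\sum_{g\in G} e^{-t d(g)} \, \widehat{x_0}(g)\, \Lambda_\sigma(g)\]
converges unordered in operator norm. Since $d$ is bounded on $G$, say $0 \leq d(g) \leq K$ for all $g \in G$, the function $\varphi \colon g \mapsto e^{t d(g)}$ lies in $\ell^\infty(G)$ (bounded by $e^{tK}$). The equivalence of conditions i) and v) in Proposition \ref{bdprop} then guarantees that multiplying the terms by the bounded scalars $\varphi(g)$ preserves unordered convergence, so
\[\sum_{g\in G} \varphi(g)\, e^{-t d(g)}\, \widehat{x_0}(g)\, \Lambda_\sigma(g) \ =\ \sum_{g\in G} \widehat{x_0}(g)\, \Lambda_\sigma(g)\]
is convergent in operator norm, i.e.\ $x_0 \in CF(G, \sigma)$. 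This is really the only step with any subtlety, and once one realizes that the boundedness of $d$ makes both $e^{-td}$ and its \emph{reciprocal} $e^{td}$ bounded functions on $G$, the argument collapses into a direct application of Proposition \ref{bdprop}.

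The consequence $I^{d,\sigma}_{x_0} = \emptyset$ whenever $x_0 \notin CF(G,\sigma)$ is then immediate: if some $t \geq 0$ were in $I^{d,\sigma}_{x_0}$, then either $t = 0$ (giving $x_0 = x(0) \in CF(G, \sigma)$) or $t > 0$ (triggering the equivalence ii) $\Rightarrow$ i)), and in either case $x_0 \in CF(G, \sigma)$, a contradiction.

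Finally, for the statement about $\epsilon(d, \sigma)$, suppose toward a contradiction that $\mathcal{E}(d, \sigma) \neq \emptyset$, so that $e^{-td} \in MCF(G, \sigma)$ for some $t > 0$. Then $M^d_t$ maps \emph{every} $x_0 \in C_r^*(G, \sigma)$ into $CF(G, \sigma)$, i.e.\ $x(t) \in CF(G, \sigma)$ for every initial datum. The equivalence just proved forces $C_r^*(G, \sigma) = CF(G, \sigma)$. When $\sigma = 1$, this contradicts Bo\.{z}ejko's result (cited in Section 2) stating that $CF(G) \neq C_r^*(G)$. When $G$ is not periodic, it contradicts the observation made at the end of Section 2.3 that $C_r^*(G, \sigma)$ contains $C(\Toro)$ as a subalgebra and hence elements with non-uniformly-convergent Fourier series, so $CF(G, \sigma) \neq C_r^*(G, \sigma)$. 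In either case, $\mathcal{E}(d, \sigma) = \emptyset$, and hence $\epsilon(d, \sigma) = \inf \emptyset = \infty$ in $[0, \infty]$.
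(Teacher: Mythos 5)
Your proof is correct and follows essentially the same route as the paper: the key step ii) $\Rightarrow$ i) rests in both cases on the observation that boundedness of $d$ makes $e^{td}$ a bounded function, so Proposition \ref{bdprop} (i) $\Leftrightarrow$ v)) lets one multiply the convergent Fourier series of $x(t)$ by it to recover the series of $x_0$. The only (harmless) difference is that the paper proves the slightly stronger statement that $\sum_g \varphi(g)\widehat{x_0}(g)\Lambda_\sigma(g)$ converges for \emph{every} $\varphi\in\ell^\infty(G)$ by factoring $\varphi=(\varphi e^{td})e^{-td}$, whereas you take the single multiplier $e^{td}$ directly; your handling of the two corollaries is also the intended one.
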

\begin{proof}
The implication $i$) $\Rightarrow iii$) follows  from Proposition \ref{CFG2}, while $iii$) $ \Rightarrow ii$) is trivial. Thus we have to show that $ii$) $ \Rightarrow i$). Assume that $x(t) \in CF(G, \sigma)$ for some $t >0$. To show that $i$) holds, i.e., that the Fourier series $\sum_{g \in G} \widehat{x_0}(g) \Lambda_\sigma(g)$ is convergent w.r.t.~operator norm, it suffices to show that the series
\[\sum_{g\in G} \varphi(g) \,\widehat{x_0}(g) \Lambda_\sigma(g)\] 
is convergent w.r.t.~operator norm for every $\varphi\in \ell^\infty(G)$, cf.~Proposition \ref{bdprop}.

So let $\varphi\in \ell^\infty(G)$. Since $d$ is bounded, it clear that the function $\psi:=\varphi e^{td}$ is bounded too. Now, since $x(t) \in CF(G, \sigma)$ and $\widehat{x(t)}(g) = e^{-td(g)}\, \widehat{x_0}(g)$ for all $g\in G$,  we have that the series \[\sum_{g \in G} e^{-td(g)}\,\widehat{x_0}(g) \Lambda_\sigma(g)\] is convergent w.r.t.~operator norm. Hence, using  again Proposition \ref{bdprop}, we get that the series
\[\sum_{g\in G} \varphi(g) \, \widehat{x_0}(g) \Lambda_\sigma(g) =\sum_{j=1}^{\infty} \psi(g)  \,e^{-td(g)}\,\widehat{x_0}(g) \Lambda_\sigma(g)\]
is convergent w.r.t.~operator norm, as desired.
\end{proof}

\begin{corollary} \label{propT2} Assume that  $G$ has property $($T\,$)$ and let $x_0\in C_r^*(G, \sigma)$. Then,
for any choice of $d$ in $ND_0^+(G)$, we have 
\[I^{d,\sigma}_{x_0} = \begin{cases} [0, \infty) & \text{if} \ x_0 \in CF(G, \sigma), \\ \,\,\, \emptyset & \text{otherwise.}\end{cases}\]
 \end{corollary}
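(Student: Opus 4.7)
The plan is to combine the characterization of property (T) in terms of negative definite functions (recalled in Section \ref{prelimgr}) with the dichotomy already established in Proposition \ref{propT} for bounded $d$.

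First I would recall that, by the Akemann-Walter theorem cited in Section \ref{prelimgr}, the hypothesis that $G$ has property (T) forces every $d\in ND_0^+(G)$ to be bounded. Fix such a $d$. The problem therefore reduces to the bounded case, and Proposition \ref{propT} applies verbatim to the pair $(d,\sigma)$ and the initial datum $x_0$.

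Next I would distinguish the two cases. If $x_0\in CF(G,\sigma)$, then Proposition \ref{propT} (implication $i)\Rightarrow iii)$) gives that $x(t)=M_t^d(x_0)\in CF(G,\sigma)$ for every $t\geq 0$, so by definition $I^{d,\sigma}_{x_0}=[0,\infty)$. If instead $x_0\notin CF(G,\sigma)$, then the implication $ii)\Rightarrow i)$ of Proposition \ref{propT} (contrapositively) rules out the existence of any $t>0$ such that $x(t)\in CF(G,\sigma)$; and $t=0$ is excluded as well since $x(0)=x_0\notin CF(G,\sigma)$. Consequently $I^{d,\sigma}_{x_0}=\emptyset$, as asserted.

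There is no real obstacle here: the content of the corollary is entirely absorbed by Proposition \ref{propT} once boundedness of $d$ has been extracted from property (T). The only thing to double-check is that Proposition \ref{propT} was stated for arbitrary $\sigma\in Z^2(G,\Toro)$ and arbitrary bounded $d\in ND_0^+(G)$, which is indeed the case, so no additional hypothesis on $\sigma$ or on $G$ (beyond property (T)) needs to be imposed to cover both the trivial and the twisted settings uniformly.
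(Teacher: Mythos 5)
Your proof is correct and follows exactly the route the paper intends: the corollary is stated without a separate proof precisely because it is the combination of the Akemann--Walter theorem (property (T) forces every $d\in ND_0^+(G)$ to be bounded) with the dichotomy of Proposition \ref{propT}. Nothing is missing.
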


One may legitimately wonder if the converse statement holds. To formulate this in a precise way, we make the following definition.
\begin{definition} \label{weakheat}
We will say that $(G, \sigma)$ has the \emph{weak heat property} whenever there exist some $d\in ND_0^+(G)$, $x_0 \not\in CF(G, \sigma)$ and $t>0$ such that $M_t^d(x_0) \in CF(G, \sigma)$ (i.e., such that $I^{d,\sigma}_{x_0} \neq \emptyset$). When this holds for $\sigma=1$, we just say that $G$ has the \emph{weak heat property}.
\end{definition}

Thus, Corollary \ref{propT2} implies that if $G$ has property (T), then $(G, \sigma)$ does not have the weak heat property. 
 We will later consider a stronger property, called the heat property. We now ask: 
 
 \medskip 
 \emph{If $G$ does not have property $($T\,$)$, does $G$ have the weak heat property?  Also, does then $(G, \sigma)$ have the weak heat property for every $\sigma \in Z^2(G, \Toro)$? } 
 
\medskip It is not difficult to show an $\ell^2$-version of these statements. Indeed, assume that $G$  does not have property (T). Then there exists an unbounded  function $d \in ND_0^+(G)$, cf.~\cite{AW}. So we can pick a  sequence $\{g_n\}_{n=1}^\infty $ of distincts elements in $G$ such that 
 $d(g_n) \geq n$ for every $n\in \Naturali$. For each $t\geq 0$, let $m^d_t:\ell^2(G)\to \ell^2(G)$ denote the bounded operator given by 
 \[[m^d_t(\xi)](g) = e^{-td(g)}\xi(g)\,.\]
 Now, pick  any $c=\{c_n\}_{n=1}^\infty \in \ell^2\setminus \ell^1$, and define $\xi_c \in \ell^2(G)\setminus\ell^1(G)$ by $\xi_c(g) = c_n$ if $g=g_n$ and $\xi_c(g)= 0$ otherwise. Then it is easy to see that  we have
 \[m^{d}_t(\xi_c) \in \ell^1(G) \, \, \text{ for every } t>0\,,\]
 i.e., $m^{d}_t(\xi_c)$ corresponds to an element in $W(G, \sigma) \subseteq CF(G, \sigma)$  for every  $t>0$. 
   
  However, this does not allow us to conclude that $(G, \sigma)$ has the weak heat property. The problem is that it is not clear that  the sequence $c$ can be chosen such that $\xi_c = \widehat{x_0}$ for some $x_0$ in $C_r^*(G, \sigma) \setminus  CF(G, \sigma)$.  Let us assume for simplicity that $\sigma = 1$.
 Setting $E=\{g_n : n\in \Naturali\}$, with the $g_n$'s chosen as above, we would like to find  some $x_0 \in C_r^*(G)$ such that $\widehat{x_0}$ has support in $E$ and  $x_0 \not\in CF(G)$ (i.e., $\widehat{ x_0} \not\in \ell^1(G)$ if $G$ is amenable). In Bo\.{z}ejko's terminology \cite{Boz}, this 
  means that $E$ should \emph{not} be  an unconditional Sidon set (see \cite[Theorem 3.1]{Boz} for a characterization of such sets). We are presently not aware of any study of unconditional Sidon sets in relation with negative definite functions on $G$, even when $G$ is amenable (in which case the notions of unconditional Sidon set and Sidon set agree)\footnote{A subset $E$ of $G$ is called a Sidon set if $x\in W(G)$ for all $x\in C_r^*(G)$ such that $\widehat{x}$ has support in $E$.}.
   
\begin{remark} Consider the following statement for an amenable group $G$:
   
   \medskip \emph{Whenever $E$ is an infinite subset of $G$ such that ${e^{-td}}_{|E} \in \ell^1(E)$ for some $t>0$ and some $d \in ND_0^+(G)$, then $E$ is a Sidon set.}
   
\medskip \noindent In connection with the above discussion, it would be natural to ask whether there exists an amenable group $G$ for which this statement is true. Note that such a group $G$ would then satisfy that $\delta(d) = \infty$ for every $d \in  ND_0^+(G)$ (as $G$ itself is not a Sidon set, since $CF(G) \neq C^*_r(G)$). Moreover, it would  produce a situation where the method outlined above to show that  $G$ has the weak heat property will not work. For the moment it is not clear to us whether groups like $ S_\infty$ (= the group of finite permutations of $\Naturali$) or the first Grigorchuk group \cite{DH} might have the above property. On the other hand, if this should happen, it would give a relatively cheap way to produce Sidon sets, which seems an interesting fact on its own.
   \end{remark}   

 To conclude this subsection, let us recall that a group with property (T) is necessarily finitely generated \cite{BHV}. We will therefore  primarily be interested in finitely generated groups. However, if $G$ is infinitely generated, hence does not have property (T), the question whether $G$ has the weak heat property does still make sense. While the status of the program of characterizing groups with the weak heat property is still uncertain, we will present in the next subsection a long list of examples of such groups (including some infinitely generated ones).  
 
\subsection{Groups with the weak heat property}

\begin{proposition}\label{deltafinite}
Suppose  $G$ is non periodic or that $\sigma=1$. Moreover, assume  that $\delta(d) < \infty$, or, more generally, that $\epsilon(d, \sigma) < \infty$ for some $d\in ND_0^+(G)$. Then $(G, \sigma)$ has the weak heat property.
\end{proposition}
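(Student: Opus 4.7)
The plan is to reduce the proposition to two independent ingredients: \emph{(i)} producing some time $t>0$ at which $M^d_t$ pushes all of $C^*_r(G,\sigma)$ into $CF(G,\sigma)$, and \emph{(ii)} exhibiting any $x_0\in C^*_r(G,\sigma)\setminus CF(G,\sigma)$. Once both are available, the pair $(x_0,t)$ witnesses the defining condition of Definition \ref{weakheat} directly.

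For \emph{(i)}, I would first observe that the two stated hypotheses collapse into one: since $\ell^2(G)\subseteq MCF(G,\sigma)$, the inequality $\epsilon(d,\sigma)\leq \delta(d)/2$ recorded just above the proposition already forces $\epsilon(d,\sigma)<\infty$ whenever $\delta(d)<\infty$. So it suffices to work under the assumption that $\epsilon(d,\sigma)<\infty$ for the given $d$, and pick any $t>\epsilon(d,\sigma)$. By definition of $\E(d,\sigma)$, we have $e^{-td}\in MCF(G,\sigma)$, so $M^d_t=M_{e^{-td}}$ maps $C^*_r(G,\sigma)$ into $CF(G,\sigma)$.

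For \emph{(ii)}, the required $x_0$ is supplied by the two cases appearing in the hypothesis exactly as in the discussion in Section 2. If $\sigma=1$, Bo\.{z}ejko's result $CF(G)\neq C^*_r(G)$ (cf.~\cite[Proposition 3.6]{Boz}) gives such an $x_0$ at once. If instead $G$ is not periodic, choose $g_0\in G$ of infinite order; since every $2$-cocycle on $\langle g_0\rangle\simeq \Relativi$ is a coboundary, we obtain an isometric embedding $C(\Toro)\simeq C^*_r(\Relativi)\hookrightarrow C^*_r(G,\sigma)$ as recalled in Section 2. Any $f\in C(\Toro)$ whose Fourier series is not uniformly unordered-convergent (i.e., any element of $C(\Toro)\setminus CF(\Relativi)$) pulls back to an $x_0\in C^*_r(G,\sigma)\setminus CF(G,\sigma)$, because operator-norm unordered convergence in $C^*_r(G,\sigma)$ restricts to the same convergence inside the subalgebra.

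Combining \emph{(i)} and \emph{(ii)}, we get $x_0\notin CF(G,\sigma)$ together with $M^d_t(x_0)\in CF(G,\sigma)$, so $t\in I^{d,\sigma}_{x_0}\neq\emptyset$ and $(G,\sigma)$ has the weak heat property. There is no real obstacle beyond careful bookkeeping: the two non-trivial facts used (Bo\.{z}ejko's $CF(G)\neq C^*_r(G)$ and the triviality of $2$-cocycles on $\Relativi$) were both recorded in the preliminaries, and the rest is a direct unwinding of the definitions of $MCF(G,\sigma)$ and $\epsilon(d,\sigma)$.
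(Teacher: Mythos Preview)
Your proof is correct and follows essentially the same approach as the paper's: pick $t>\epsilon(d,\sigma)$ so that $e^{-td}\in MCF(G,\sigma)$, then use the hypothesis ($\sigma=1$ or $G$ non-periodic) to guarantee some $x_0\in C^*_r(G,\sigma)\setminus CF(G,\sigma)$. You are simply more explicit than the paper in spelling out the reduction $\delta(d)<\infty\Rightarrow\epsilon(d,\sigma)<\infty$ and the source of $x_0$.
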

\begin{proof} For any $t > \epsilon(d, \sigma)$, we have $e^{-td} \in MCF(G, \sigma)$, hence $M_t^d(x_0) \in CF(G, \sigma)$ for every $x_0 \in C_r^*(G, \sigma)$. Since we can pick some $x_0 \not\in CF(G, \sigma)$, we obtain that $(G, \sigma)$ has the weak heat property. 
\end{proof} 
Note that $G$ has the Haagerup property whenever $\delta(d)<\infty$, as observed in the preliminaries.

\begin{example} As a very first example, let $G=\Relativi$ and $d\in ND_0^+(\Relativi)$ be given $d(m)=m^2$ for $m \in \Relativi$. Then it is immediate that $\delta(d)= 0$, so $\Relativi$ has the weak heat property. We will see in Example \ref{Zn} that $(\Relativi^n, \sigma)$ has the heat property for every $n\in \Naturali$ and $\sigma \in Z^2(\Relativi^n, \Toro)$.
\end{example} 

\begin{example}
Assume $G$ is a discrete subgroup of $SL(2, \Reali)$, acting on the hyperbolic plane $\mathbb{H}^2=\{ z\in \Complessi : {\rm Im} z >0\}$ by fractional linear transformations, namely \[\begin{pmatrix} a & b \\ c& d\end{pmatrix}\cdot z = \frac{az + b}{cz +d},\] and let $d:G\to [0,\infty)$ be given by $d(g) = \rho( gz_0, z_0)$, where $\rho$ denotes the hyperbolic metric on $\mathbb{H}^2$ and $z_0\in \mathbb{H}^2$. Then it is known that $d$ is negative definite (and proper), cf.~\cite{FH, CCJJV}. Moreover, the Poincar\'e exponent $\delta_G$ of $G$, which is given by
\[\delta_G := \delta(d)= \inf\big\{s>0 : \sum_{g\in G} e^{-sd(g)} < + \infty\big\},\] is independent of the choice of $z_0$ and lies in the interval $[0,1]$, cf.~\cite{Patterson76}. Hence we get from Proposition \ref{deltafinite} that $G$ has the weak heat property.  We will later show by a different method that $SL(2, \Relativi)$ satisfies the heat property, cf.~Example \ref{SL2}.

The same kind of argument as above can be applied to discrete subgroups of $SL(2, \Complessi)$, by considering their action by isometries on the $3$-dimensional hyperbolic space $\mathbb{H}^3$.
\end{example}

\begin{example}
We first recall that if $c \lesssim d$ for some $c: G\to [0, \infty)$ such that $\delta(c) < \infty$, then $\delta(d) < \infty$ (cf.~Subsection \ref{prelimgr}),
so we can conclude from Proposition \ref{deltafinite} that $(G, \sigma)$ has the weak heat property. (Similarly, $(G, \sigma)$ will have the heat property if $\delta(c) = 0$).
  
 A situation where this can be applied is the following. Let $(X, \rho)$ be a metric space and assume that there is a proper action of  a finitely generated group $G$ by isometries on $X$ (proper means that for any compact subset $B$ of $X$, the set $\{ g \in G: gB\cap B \neq \emptyset \} $ is finite). Moreover assume that the orbit space $X/G$ has finite diameter. Let $x \in X$ and set $d(g) = \rho(gx, x)$. Then we have $|\cdot|_S \lesssim d$ for any finite generating subset $S$ of $G$, cf.~\cite[p.~338]{PoSh}. Since $\delta(|\cdot|_S) < \infty$, cf.~Lemma \ref{alg-length}, we see that
 $\delta(d) < \infty$. Moreover, in some cases, e.g.~when $X$ is an $\mathbb R$-tree, it turns out that $d$ is negative definite \cite[Theorem 6.1]{Boz89}. Therefore, when this happens, we get that $G$ has the weak heat property. Examples of groups satisfying all these requirements are fundamental groups of compact manifolds with Euler characteristic less than $-1$, cf.~\cite{PoSh}.
 
 In general, given a finitely generated group $G$, an interesting problem is to produce some $d\in ND_0^+(G)$ such that $|\cdot|_S \lesssim d$ for some (any) word length function $|\cdot|_S$ on $G$. When this can be achieved, we can conclude as above that $G$ has the weak heat property.
\end{example}
  
 \begin{remark}
 It is well-known that amenability of a group implies $\Lambda$-amenability as defined by Lance \cite{La}, which then implies $K$-amenability as defined by Cuntz \cite{Cu}. Moreover, every group with the Haagerup property is $K$-amenable, as shown by Tu (cf.~\cite[Proposition 3.8 and Th\'eor\`eme 9.3]{Tu}), and every infinite $K$-amenable group does not have property (T). It seems therefore natural to ask whether all infinite $K$-amenable groups have the weak heat property. Besides, one may also wonder whether every group with the heat property is $K$-amenable.
 \end{remark}
 
In the next statement we discuss a situation where a group inherits the weak heat property from a subgroup.
\begin{lemma}  \label{subgroup} 
Assume $G$ has a subgroup $H$ which has the weak heat property, and let $d_H \in ND_0^+(H)$, $x_0 \in C_r^*(H) \setminus CF(H)$ and $t>0$ be such that $M_t^{d_H}(x_0) \in CF(H)$. If $d_H$ can be extended to some $d \in ND_0^+(G)$, then $G$ has the weak heat property.
\end{lemma}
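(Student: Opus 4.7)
The plan is to lift the data $(d_H, x_0, t)$ from $H$ to $G$ using the canonical isometric embedding of $C_r^*(H)$ into $C_r^*(G)$. Specifically, the left regular representation $\lambda_G$ restricted to $H$ decomposes as a direct sum of copies of $\lambda_H$, via the orthogonal decomposition $\ell^2(G) = \bigoplus_c \ell^2(Hc)$ over the right cosets $Hc$ of $H$ in $G$, with each $\ell^2(Hc)$ being $H$-equivariantly isomorphic to $\ell^2(H)$. This produces an injective $\ast$-homomorphism $\iota : C_r^*(H) \hookrightarrow C_r^*(G)$ satisfying $\iota(\lambda_H(h)) = \lambda_G(h)$ for every $h \in H$; by construction, $\iota$ is isometric, and hence for any finite subset $F \subseteq H$ and any scalars $\{c_h\}_{h\in F}$ one has $\bigl\|\sum_{h\in F} c_h \lambda_G(h)\bigr\| = \bigl\|\sum_{h\in F} c_h \lambda_H(h)\bigr\|$.

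Next I would compute Fourier coefficients of lifted elements. For $y \in C_r^*(H)$, the fact that $\iota(\lambda_H(h)) = \lambda_G(h)$ together with continuity and linearity of $\tau$ yields $\widehat{\iota(y)}(g) = \widehat{y}(g)$ if $g\in H$ and $\widehat{\iota(y)}(g)=0$ otherwise. Combined with the norm identity above, this shows that the (unordered) convergence in $C_r^*(G)$ of the Fourier series of $\iota(y)$ is equivalent to the (unordered) convergence in $C_r^*(H)$ of the Fourier series of $y$; in particular, $\iota(y) \in CF(G)$ if and only if $y \in CF(H)$. Applying this to $x_0$ and to $M_t^{d_H}(x_0)$ gives $\iota(x_0) \notin CF(G)$ and $\iota(M_t^{d_H}(x_0)) \in CF(G)$.

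It then remains to verify the key identity $M_t^d(\iota(x_0)) = \iota\bigl(M_t^{d_H}(x_0)\bigr)$ in $C_r^*(G)$. Since the map $x \mapsto \widehat{x}$ from $C_r^*(G)$ into $\ell^2(G)$ is injective, it suffices to compare Fourier coefficients. For $g \in H$, using $d|_H = d_H$, one obtains
\[
\widehat{M_t^d(\iota(x_0))}(g) = e^{-td(g)}\,\widehat{\iota(x_0)}(g) = e^{-td_H(g)}\,\widehat{x_0}(g) = \widehat{M_t^{d_H}(x_0)}(g) = \widehat{\iota(M_t^{d_H}(x_0))}(g),
\]
while for $g \notin H$ both sides vanish. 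Hence $M_t^d(\iota(x_0)) = \iota(M_t^{d_H}(x_0)) \in CF(G)$, and together with $\iota(x_0) \in C_r^*(G) \setminus CF(G)$ this shows that $G$ has the weak heat property, witnessed by the triple $(d, \iota(x_0), t)$.

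The only slightly delicate point is establishing the embedding $\iota$ with the above properties; once this is in place, all remaining steps reduce to bookkeeping on Fourier coefficients and so present no real obstacle.
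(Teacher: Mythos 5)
Your proposal is correct and follows essentially the same route as the paper: lift via the canonical embedding $\iota: C_r^*(H)\hookrightarrow C_r^*(G)$, compute that $\widehat{\iota(y)}$ is $\widehat{y}$ extended by zero, identify $M_t^d(\iota(x_0))$ with $\iota(M_t^{d_H}(x_0))$ by comparing Fourier coefficients, and transfer (non-)convergence of the Fourier series back and forth using the isometry of $\iota$. The only difference is that you spell out the coset decomposition behind $\iota$ and the two-way equivalence $\iota(y)\in CF(G)\Leftrightarrow y\in CF(H)$, which the paper compresses into ``by the same kind of argument''.
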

\begin{proof} Assume $d_H$ can be extended to $d \in ND_0^+(G)$. Let $\iota$ denote the canonical embedding of $C_r^*(H)$ into $C_r^*(G)$. Then, for $x \in C_r^*(H)$ and $g\in G$, we have
\[\widehat{\iota(x)}(g)=\begin{cases} \widehat{x}(g) & \text{if } g\in H,\\ 0 &  \text{if } g\not\in H.\end{cases}\] 
Thus, if $h\in H$, we have 
 \[\widehat{M_t^d(\iota(x_0))}(h) = e^{-td(h)} \widehat{\iota(x_0)}(h) =  e^{-td_H(h)}\widehat{x_0}(h) = \widehat{M_t^{d_H}(x_0)}(h), \]
 while if $g\not \in H$, we get
 \[\widehat{M_t^d(\iota(x_0))}(g) = e^{-td(g)} \widehat{\iota(x_0)}(g) = 0.\]
 
 Set now $y_0:= \iota(x_0)  \in C_r^*(G)$. Since $\sum_{h\in H} \widehat{M_t^{d_H}(x_0)}(h)\Lambda_H(h)$ is convergent w.r.t.~operator norm in $C_r^*(H)$, we get that the Fourier series of $M_t^d(y_0)$, which is given by
 \[\sum_{g\in G} \widehat{M_t^{d}(y_0)}(g)\Lambda_G(g) =  \sum_{h\in H} \widehat{M_t^{d_H}(x_0)}(h)\Lambda_G(h) = \iota\Big(\sum_{h\in H} \widehat{M_t^{d_H}(x_0)}(h)\Lambda_H(h)\Big),\]
 is convergent w.r.t.~operator norm in $C_r^*(G)$, i.e., $M_t^d(y_0) \in CF(G)$. By the same kind of argument, we also get that $y_0\not \in CF(G)$.  
\end{proof} 
The case where $\sigma$ is not trivial can be handled in a similar way, but we leave this to the reader.

\begin{example} Consider $G = \mathbb{Q}$ and $H= \Relativi$. Then the function $d_\Relativi$ given by $d_\Relativi(m) = m^2$ ($m \in \Relativi$) belongs to $ND_0^+(\Relativi)$ and it has an extension $d \in ND_0^+(\mathbb{Q})$ given by the same formula. Since $\delta (d_\Relativi) = 0$ we can pick $x_0 \in C_r^*(\Relativi) \setminus CF( \Relativi)$ and $t>0$  such that $M_t^{d_ \Relativi}(x_0) \in CF(\Relativi)$. Thus Lemma \ref{subgroup} applies and gives that $\mathbb{Q}$ has the weak heat property.
\end{example} 

\begin{proposition} \label{weakheatfreeprod}
Assume that $G_1*G_2$ is the free product of two nontrivial groups $G_1$ and $G_2$, and that $G_1$ has the weak heat property. Then $G_1 * G_2$ has the weak heat property.
\end{proposition}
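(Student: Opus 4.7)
The plan is to reduce the claim to Lemma \ref{subgroup}, which already does most of the work: once we extend a suitable negative definite function from $G_1$ to $G_1 * G_2$, we are done. The whole question is therefore whether, given $d_1 \in ND_0^+(G_1)$, we can produce some $d \in ND_0^+(G_1 * G_2)$ with $d|_{G_1} = d_1$.

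For free products, the key structural fact I would exploit is the existence of a canonical retraction $\pi : G_1 * G_2 \to G_1$. By the universal property of the free product, the pair consisting of $\mathrm{id}_{G_1} : G_1 \to G_1$ and the trivial homomorphism $G_2 \to G_1$ (sending everything to $e$) induces a (surjective) group homomorphism $\pi : G_1 * G_2 \to G_1$ whose restriction to $G_1$ is the identity. This is precisely the extra input that general subgroup inclusions do not provide, and it is really what makes the statement work.

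Invoking the weak heat property of $G_1$, I would then select $d_1 \in ND_0^+(G_1)$, an element $x_0 \in C_r^*(G_1) \setminus CF(G_1)$, and $t > 0$ such that $M_t^{d_1}(x_0) \in CF(G_1)$. Set $d := d_1 \circ \pi : G_1 * G_2 \to [0, \infty)$. That $d \in ND_0^+(G_1 * G_2)$ is immediate from the definition of negative definiteness: for any $g_1,\dots,g_n \in G_1 * G_2$ and $b_1,\dots,b_n \in \Complessi$ with $\sum_i b_i = 0$, we have
\[\sum_{i,j=1}^n \overline{b_i}b_j\, d(g_i^{-1} g_j) = \sum_{i,j=1}^n \overline{b_i}b_j\, d_1\bigl(\pi(g_i)^{-1}\pi(g_j)\bigr) \leq 0,\]
since $\pi$ is a homomorphism and $d_1$ is negative definite; and $d(e) = d_1(e) = 0$. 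Moreover, $\pi|_{G_1} = \mathrm{id}_{G_1}$ gives $d|_{G_1} = d_1$, so $d$ extends $d_1$.

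With these ingredients in place, Lemma \ref{subgroup} applied to the subgroup $H = G_1 \leq G_1 * G_2$ and the extension $d$ produces an element $y_0 \in C_r^*(G_1 * G_2) \setminus CF(G_1 * G_2)$ such that $M_t^d(y_0) \in CF(G_1 * G_2)$, which is exactly the weak heat property for $G_1 * G_2$. There is no real obstacle here; the only potentially delicate point is verifying the existence of $\pi$, but this is just the universal property of free products applied to the trivial homomorphism on the second factor.
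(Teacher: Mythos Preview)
Your proof is correct, and it takes a genuinely different route from the paper. Both arguments reduce to Lemma \ref{subgroup}; the only task is to extend $d_1 \in ND_0^+(G_1)$ to some $d \in ND_0^+(G_1*G_2)$. The paper chooses the ``free product'' extension: pick any $d_2 \in ND_0^+(G_2)$ and set $d(s_1\cdots s_n) = d_{i_1}(s_1)+\cdots+d_{i_n}(s_n)$ on reduced words, then verifies negative definiteness via Schoenberg's theorem together with a nontrivial external result (Picardello, Bo\.{z}ejko) on positive definite functions on free products. You instead pull $d_1$ back along the canonical retraction $\pi: G_1*G_2 \to G_1$, so that negative definiteness is immediate from the definition and no outside input is needed. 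This is precisely the mechanism the paper itself uses in Proposition \ref{semidirect} for semidirect products, and it works here for the same reason: $G_1$ sits in $G_1*G_2$ as a retract.

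What each approach buys: yours is shorter and entirely self-contained, and in fact shows slightly more, namely that any group admitting a retraction onto a subgroup with the weak heat property inherits it. The paper's extension, on the other hand, can be arranged to be proper (when $d_1$ and $d_2$ are), which is irrelevant for the weak heat property but is exactly what is needed later in Proposition \ref{freeprod} to establish the full heat property for free products via subexponential H-growth. Your pullback $d_1\circ\pi$ vanishes on the normal closure of $G_2$ and is never proper, so it would not serve that stronger purpose.
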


\begin{proof}
By assumption, there exist $d_1 \in ND_0^+(G_1)$, $x_1 \in C^*_r(G_1)$ and  $t_1 >0$ as in the definition of weak heat property. Pick any $d_2 \in ND_0^+(G_2)$. Let then  $d$ be the function on $G_1*G_2$ induced by $d_1$ and $d_2$, i.e., $d(e) = 0 $ and \[d(g) = d_{i_1}(s_{1})+ d_{i_2}(s_{2}) + \cdots + d_{i_n}(s_{n})\] whenever  $g= s_{1}s_{2}\ldots s_{n}$, where each $s_{j} \in G_{i_j}$ for some $i_j\in \{1,2\}$, and $i_{j+1}\neq i_j$ for each $j < n$. 
Then $d \in ND_0^+(G_1 * G_2)$. Indeed, let $t>0$. Then each $f_k:= e^{-t d_k}$ is positive definite on $G_k$ ($k=1, 2$). Hence, for $g$ as above, we have 
\[e^{-t d(g)} =  \prod_{j=1}^n e^{-t d_{i_j}(s_{j})} = \prod_{j=1}^n f_{i_j}(s_{j}) =: (f_1*f_2)(g)\] 
Then, by \cite[Theorem 1]{Pic} (see also \cite[Theorem 1]{Boz86} and \cite[Corollary 3.2]{Boz87}), we get that $e^{-t d}= f_1*f_2$ is positive definite on $G$. Since this is true for every $t>0$, Schoenberg's theorem gives that $d$ is negative definite.

Moreover, $d$ coincides with $d_1$ on $G_1$. The conclusion now follows immediately from Lemma \ref{subgroup}.
\end{proof}
This proposition implies that every finitely generated non-abelian free group has the weak heat property. In fact, we will see in Example \ref{free-gp} that they have the heat property.

 \begin{proposition}\label{semidirect}
 Let $G = \Gamma\rtimes K$ be the semidirect product of a group $\Gamma$ by some action of a group $K$, and assume that $K$ has the weak heat property. Then $G$ has the weak heat property.
 \end{proposition}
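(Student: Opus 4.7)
The plan is to reduce the proposition to Lemma \ref{subgroup} by exploiting the canonical splitting that comes with a semidirect product. Specifically, $G = \Gamma \rtimes K$ gives both an embedding of $K$ as the subgroup $\{e_\Gamma\}\times K$ of $G$ and a quotient homomorphism $p: G \to K$ with kernel $\Gamma$, satisfying $p \circ \iota = \mathrm{id}_K$ where $\iota: K \hookrightarrow G$ is the embedding. This is what I would use to transport a witness of the weak heat property from $K$ up to $G$.

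First, since $K$ has the weak heat property, I would fix a triple $(d_K, x_0, t)$ with $d_K \in ND_0^+(K)$, $x_0 \in C_r^*(K)\setminus CF(K)$ and $t > 0$ such that $M_t^{d_K}(x_0) \in CF(K)$. Next, I would define $d := d_K \circ p$ on $G$ and check that $d \in ND_0^+(G)$. Since $p$ is a group homomorphism, $p(g^{-1}) = p(g)^{-1}$ and $p(g_i^{-1}g_j) = p(g_i)^{-1}p(g_j)$, so both the symmetry condition $d(g^{-1}) = \overline{d(g)}$ and the defining inequality for negative definiteness of $d$ follow verbatim from the corresponding properties of $d_K$; moreover $d(e_G) = d_K(e_K) = 0$. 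Because $p$ restricts to the identity on $\iota(K)$, the function $d$ actually extends $d_K$ (after the identification $K \cong \iota(K) \subseteq G$), which is precisely the hypothesis needed in Lemma \ref{subgroup}. Applying that lemma then immediately yields that $G$ has the weak heat property.

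I do not expect any substantive obstacle: the only work is the routine verification that pullback along a group homomorphism preserves membership in $ND_0^+(\,\cdot\,)$, together with the bookkeeping identification of $K$ with a subgroup of $G$. The twisted case is not claimed here, which is consistent with the remark following Lemma \ref{subgroup}.
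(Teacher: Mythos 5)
Your proposal is correct and follows essentially the same route as the paper: the authors also pull back $d_K$ along the canonical quotient map $\pi: G \to K$ to get $d = d_K \circ \pi \in ND_0^+(G)$ extending $d_K$ on the copy of $K$ inside $G$, and then invoke Lemma \ref{subgroup}. Your extra remarks verifying that pullback along a homomorphism preserves negative definiteness are the (routine) details the paper leaves implicit.
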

 \begin{proof}
 By assumption, we can pick $d_K\in ND_0^+(K)$,  $x_0 \in C_r^*(K)\setminus CF(K)$ and $t>0$ such that $M_t^{d_K}(x_0)\in CF(K)$. Define $d$ on $G$ by $d=d_K\circ \pi$ where $\pi: G\to K$ is the canonical map. Then $d$ is an extension of $d_K$ and $d \in ND_0^+(G)$. (Since $d_K$ is unbounded, $d$ is unbounded too, and it is not proper if $\Gamma$ is infinite.) It follows from Lemma \ref{subgroup} that $G$ has the weak heat property. 
 \end{proof}
 
 \begin{remark}\label{semidirect2}
We note that if $K$ has the heat property, then $G$ will have some kind of relative heat property which is stronger than the weak heat property (cf.~Example \ref{SL2}  about $\Relativi^2\rtimes SL(2, \Relativi)$). 
 \end{remark}

\begin{example}
Let $G$ be the discrete $3$-dimensional Heisenberg group. Then it is well-known that $G$ may be written as a semidirect product $\Relativi^2\rtimes \Relativi$. Hence Proposition \ref{semidirect} gives that $G$ has the weak heat property. Now, it is also known that $G$ has polynomial growth, and this will imply that $G$ has the heat property, as we will see in Example \ref{CS}.
\end{example}

\begin{example} Let $G=\Relativi_2 \wr \Relativi $ denote the lamplighter group, i.e., $G$ is the semidirect product $(\bigoplus_\Relativi \Relativi_2) \rtimes \Relativi$, where $\Relativi$ acts by translation on the index set $\Relativi$ (see e.g.~\cite[p.~103]{DH}). Since $\Relativi$ has the weak heat property, we get from Proposition \ref{semidirect} that $G$ has the weak heat property. This argument applies to any wreath product $H\wr K$ as long as $K$ has the weak heat property.  
\end{example}

\begin{example}\label{purebraid}
For  $n \geq 2$, let $P_n$ denote the pure braid group on $n$ strings \cite[Section 1.3]{KT}. Then $P_2 \simeq \Relativi$ and $P_n \simeq \mathbb{F}_{n-1}\rtimes P_{n-1}$ for $n \geq 3$, so we get
\[ P_{n} \simeq  \mathbb{F}_{n-1}\rtimes ( \mathbb{F}_{n-2} \rtimes ( \cdots \rtimes(\mathbb{F}_2\rtimes \Relativi)\cdots ))\]
for $n \geq 3$. Hence, by using Proposition \ref{semidirect} repeatedly, we get that $P_n$ has the weak heat property. 

Similarly, letting $B_n$ denote the braid group on $n$ strings, we will see in Example \ref{braid} that they also have the weak heat property.
\end{example}

\begin{remark} \label{direct}
Proposition \ref{semidirect} may clearly be applied to a direct product $\Gamma \times K$.
Choosing $K$ with the weak heat property, e.g., $K= \Relativi$, we can produce many examples of groups with the weak heat property, including amenable ones, and also infinitely generated ones. If $\Gamma$ has intermediate growth (e.g., the first Grigorchuk group \cite{DH}), we get that $\Gamma \times \Relativi$ has intermediate growth and the weak heat property. Similarly, one can produce amenable groups having exponential growth and the weak heat property. 
\end{remark}

\begin{example} 
Consider $G = SL(3, \Relativi) \times \Relativi$.  Then $G$ has the weak heat property, while its subgroup $SL(3, \Relativi)$ does not (since it has property (T)). This example shows that  the weak heat property does not necessarily pass to subgroups. 
\end{example}

\begin{remark} One may wonder how to deal with the general situation where $G$ has a quotient group  which has the weak heat property. Will $G$ then also have this property? In lack of a positive answer for the moment, one could say that $G$ has the \emph{ultraweak heat property} if $G$ has at least one quotient group with the weak heat property. Obviously, any group with the weak heat property will then have the ultraweak heat property. Note that if $G$ has the ultraweak heat property, then $G$ does not have property (T). Instead of trying to prove that a group without property (T) has the weak heat property, it might turn out to be easier to show that it has the ultraweak heat property. Of course, this will not help to handle simple groups or groups with only finite quotients.
\end{remark}

\begin{example} For $p\geq 2$, let $BS(p, p)$ denote the Baumslag-Solitar group with presentation 
 \[ BS(p,p) =\langle a, b \mid ab^{\,p}= b^{\,p}a\rangle.\]
Then its center $Z(p)$ is the cyclic subgroup $\langle b^{\,p}\rangle$, and $BS(p,p) / Z(p) $ is clearly isomorphic to $K:=\Relativi*\Relativi_p$. So there is a homomorphism $\pi$ from $BS(p, p)$ onto $K$. Letting $\pi': K\to \Relativi$ be the natural surjective homomorphim, we get that $\pi'\circ \pi$ is a homomorphism from $BS(p,p)$ onto $\Relativi$. Since $\Relativi$ is free, this homomorphism splits, so we have $BS(p,p)\simeq H\rtimes \Relativi$, where $H:= \ker(\pi'\circ \pi)$. 
Thus $BS(p,p)$ has the weak heat property. 
 \end{example}
  
Using the same kind of argument as in the example above, we get:
\begin{proposition}
Assume  $G_{\rm ab} := G/[G, G]$ is finitely generated and infinite.\footnote{This assumption implies that $G$ does not have property (T), cf.~\cite{BHV}.} Then $G$ has the weak heat property.
\end{proposition}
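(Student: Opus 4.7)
The plan is to mimic the argument sketched just above for the Baumslag--Solitar case, using that $G$ surjects onto an infinite finitely generated abelian group, which in turn surjects onto $\Relativi$, and then invoking Proposition~\ref{semidirect}.

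First, since $G_{\rm ab} = G/[G,G]$ is finitely generated and infinite, the structure theorem for finitely generated abelian groups gives $G_{\rm ab} \simeq \Relativi^n \oplus F$, where $n \geq 1$ (because $G_{\rm ab}$ is infinite) and $F$ is a finite abelian group. Projecting onto the first $\Relativi$-summand produces a surjective homomorphism $G_{\rm ab} \to \Relativi$. Composing with the canonical quotient $G \to G_{\rm ab}$, we obtain a surjective homomorphism $\pi : G \to \Relativi$.

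Next, since $\Relativi$ is a free group (on one generator), any short exact sequence $1 \to \ker(\pi) \to G \to \Relativi \to 1$ splits: choose any $a \in G$ with $\pi(a) = 1$ and define a section $\Relativi \to G$ by $n \mapsto a^n$. Setting $H := \ker(\pi)$, this gives $G \simeq H \rtimes \Relativi$, where $\Relativi$ acts on $H$ by conjugation via the chosen section.

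Finally, $\Relativi$ has the weak heat property (as noted in the first example of the previous subsection, using $d(m) = m^2$, which has $\delta(d) = 0$, together with Proposition~\ref{deltafinite}). Applying Proposition~\ref{semidirect} to the semidirect product decomposition $G \simeq H \rtimes \Relativi$ therefore yields that $G$ has the weak heat property. There is no real obstacle here, as every ingredient (structure of finitely generated abelian groups, splitting of quotients onto $\Relativi$, the semidirect product stability result) is already available.
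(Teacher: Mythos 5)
Your proposal is correct and follows exactly the paper's own argument: project $G_{\rm ab}$ onto a $\Relativi$-summand, note that the resulting surjection $G\to\Relativi$ splits since $\Relativi$ is free, and apply Proposition \ref{semidirect} to the decomposition $G\simeq H\rtimes\Relativi$. The only difference is that you spell out the structure theorem and the splitting in more detail than the paper does.
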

\begin{proof}
Since $G_{\rm ab}$ is an infinite finitely generated abelian group, there exists a homomorphism from $G_{\rm ab}$ onto $\Relativi$, so we may write $G\simeq H\rtimes \Relativi$ for some normal subgroup $H$ of $G$. Hence the assertion follows from Proposition \ref{semidirect}.  
\end{proof}
This proposition can be used to give more examples of groups without property (T) having the weak heat property. 

\begin{example}
Let $F$ be the Thompson group with presentation \[ F=\langle a,b :  [ab^{{-1}}, a^{{-1}}ba]=[ab^{{-1}},a^{{-2}}ba^{{2}}]=1 \rangle,\] see e.g.~\cite{CFP}. Since $F_{\rm ab}$ is isomorphic to $\Relativi^2$, we get that $F$ has the weak heat property.
 \end{example}
 
 \begin{example}
Let $G$ be the fundamental group of a closed orientable surface of genus $g\geq 1$.Then it is known that  $G_{\rm ab}$ is isomorphic to $\Relativi^{2g}$, cf.~\cite{DH}, so it follows that $G$ has the weak heat property. A similar argument can be applied in the  non-orientable case if the genus is $\geq 2$.
 \end{example}
 
 \begin{example} \label{braid}
 Let $G= B_n$ be the braid group on $n$ strings ($n\geq 2$). Then $G_{\rm ab}$ is isomorphic to $\Relativi$ (see e.g.~\cite[Section 1.1]{KT}).
 Hence $G$ has the weak heat property.
 \end{example}
 
 \begin{example} Let $B_\infty$ denote the braid groups on infinitely many strands, which may be written as the inductive limit $B_\infty = \underset{\to}{\lim } \, B_n$ where the embedding $B_n\to B_{n+1}$ is determined by adding one string connecting $n+1$ to itself, cf.~\cite{KT}. Then it is easy to see that $B_\infty$ has $\Relativi$ as a quotient group, so it follows that $B_\infty$ has the weak heat property. Letting $P_\infty$ denote the pure braid group on infinitely many strings, i.e., $P_\infty$ is the kernel of the canonical homomorphism from $B_\infty$ onto the infinite symmetric group $S_\infty$, then we also get that $P_\infty$ has $\Relativi$ as a quotient, so it has the weak heat property. 
 \end{example}

\subsection{The heat property for groups} \label{heat-property}
Although we have no definite answer to the question whether every group without property (T) has the weak heat property, this line of thought opens the door to some interesting development.

We recall that $d\in ND_0^+(G)$.  An immediate consequence of the definition of $MCF(G, \sigma)$ is that the following two conditions are equivalent:
\begin{itemize} 
\item[(H1)] $M_t^d(x_0)$ belongs to $CF(G, \sigma)$ for every $x_0 \in C_r^*(G, \sigma)$ and every $t>0$.
\item[(H2)] $\epsilon(d, \sigma) = 0$, that is, $\varphi_t = e^{-td} \in MCF(G, \sigma)$ for all $t>0$.
\end{itemize}

\begin{definition}\label{heatproperty}
We will say that $(G, \sigma)$ has the \emph{heat property w.r.t.~$d$} when any of the equivalent conditions (H1) and (H2) holds. Moreover, we will say that $(G, \sigma)$ has the \emph{heat property} if there exists some  $d\in ND_0^+(G)$ such that $(G, \sigma)$ has the heat property w.r.t.~$d$. If $\sigma=1$, we skip it in our terminology.
\end{definition} 

\begin{remark} \label{summing} Since $e^{-td}$ converges pointwise to $1$ as $t\to 0^+$, and $\|M_t^d\|=1$ for all $t>0$, we have that $M_t^d(x) \to x $ as $t\to 0^+$ for every $x \in C_r^*(G, \sigma)$. Hence,  saying that $(G, \sigma)$ has the heat property w.r.t.~$d$ is equivalent to requiring that the net $\{e^{-td}\}_{t>0}$ is a Fourier summing net for $(G, \sigma)$ as defined in \cite{BeCo1}. 
\end{remark}

Using Proposition \ref{CFt1}, we get that $(G, \sigma)$ has the heat property w.r.t.~$d$ if and only if $I^{d,\sigma}_{x_0} = (0, \infty)$ for every $x_0 \not\in CF(G, \sigma)$. We also note that, by Corollary \ref{propT2}, the heat property fails (for all $\sigma$ and $d$) whenever $G$ has property (T). It is also clear that the heat property implies the weak heat property. 

\begin{remark} \label{bounded}  If $d$ is bounded, then there exists no  $t>0$ such that $\varphi_t=e^{-td}$ belongs to $MCF(G, \sigma)$, at least when $\sigma =1$ or $G$ is non-periodic. Indeed, if such  $t >0$ exists, then we can pick some $x_0\in C_r^*(G, \sigma)\setminus CF(G, \sigma)$ and get that $x(t)=M_t^d(x_0) \in CF(G, \sigma)$, contradicting Proposition \ref{propT}.  
\end{remark}

\begin{remark} Assume that $H$ is an infinite subgroup of $G$ and $(G, \sigma)$ has the heat property. Then $(H, \sigma)$ has the heat property.
Indeed, assume $(G, \sigma)$ has the heat property w.r.t.~$d$, and let $d_H$ denote the restriction of $d$ to $H$. Clearly, $d_H$ is negative definite and normalized. Moreover, $(H, \sigma)$ has the heat property w.r.t.~$d_H$. Indeed, it is not difficult to check that $e^{-td_H}$ belongs to $MCF(H, \sigma)$ for all $t > 0$ (using similar arguments as in Lemma \ref{subgroup}). 

It follows  that a group without property (T) does not necessarily have the heat property. Indeed, $G= SL(3, \Relativi) \times \Relativi$ neither has
property (T) (cf.~\cite[Proposition 1.7.8]{BHV}) nor the heat property (since its subgroup $SL(3, \Relativi)$ has property (T)).  On the other hand, it has the weak heat property (by~Proposition \ref{semidirect}). 
\end{remark}

 As a consequence of Remark \ref{bounded}, we only consider unbounded $d$ in the rest of this subsection.  
One may actually wonder if the heat property (w.r.t.~$d$) implies that $d$ must be proper, i.e., $e^{-t d} \in c_0(G)$ for some (hence all) $t > 0$. This would imply that $G$ must have the Haagerup property, cf.~\cite{CCJJV}. This will be the case in all the examples we are going to exhibit.

 We now turn our attention to providing conditions  for the heat property to hold. We examine two possible situations. The first one is that $\delta(d) = 0$, which is only applicable when $G$ is amenable. The second one requires that $G$ satisfies a suitable decay property, cf.~Proposition \ref{etd-decay}.  

As pointed out in the preliminaries, the first two statements of the following proposition are known. 
\begin{proposition} \label{ell} 
Assume that $\delta(d) = 0$. Then $G$ is amenable, $d$ is proper, and $(G, \sigma)$ has the heat property w.r.t.~$d$.
\end{proposition}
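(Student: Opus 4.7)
The plan is very short, since both parts of the proposition are essentially immediate consequences of facts already collected in Section~\ref{prelimgr}. First I would dispense with the amenability and properness claims by citing the preliminaries: the paragraph introducing the Poincar\'e exponent records that $\delta(d)<\infty$ forces $d$ to be proper, and that $\delta(d)=0$ forces $G$ to be amenable (via \cite{GK, CTV, AIM}). No new argument is required there; the work lies entirely in verifying the heat property.

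For that I would use the equivalent formulation (H2) and show directly that $e^{-td}\in MCF(G,\sigma)$ for every $t>0$. The natural route is via the inclusion $\ell^2(G)\subseteq MCF(G,\sigma)$ that was recorded in the preliminaries (in fact, any $\ell^2$-multiplier even maps $C_r^*(G,\sigma)$ into $W(G,\sigma)$). Thus it suffices to check that $e^{-td}\in \ell^2(G)$ for every $t>0$, and this is precisely what the hypothesis $\delta(d)=0$ provides: by the very definition of the Poincar\'e exponent, $\sum_{g\in G}e^{-sd(g)}<\infty$ for every $s>0$, so taking $s=t$ already gives the stronger conclusion $e^{-td}\in \ell^1(G)\subseteq \ell^2(G)$. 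This yields $\epsilon(d,\sigma)=0$, which is condition (H2), and so $(G,\sigma)$ has the heat property w.r.t.~$d$.

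There is essentially no obstacle: the proposition is best viewed as a direct corollary of the material assembled in Section~\ref{prelimgr} together with the chain of inclusions $\ell^1(G)\subseteq\ell^2(G)\subseteq MCF(G,\sigma)$. The only bookkeeping point worth a brief remark is that the $\ell^2\hookrightarrow MCF$ inclusion should be invoked in its twisted form, but the preliminaries already state it at that level of generality, so the argument transfers verbatim to arbitrary $\sigma\in Z^2(G,\Toro)$.
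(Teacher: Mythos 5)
Your proposal is correct and follows essentially the same route as the paper: the paper's proof also deduces the heat property from $\varphi_t=e^{-td}\in\ell^2(G)\subseteq MCF(G,\sigma)$ for every $t>0$, notes that properness is immediate, and obtains amenability from the fact that $\{e^{-td}\}_{t>0}$ is a net of normalized positive definite $\ell^2$-functions converging pointwise to $1$ (the argument behind the citation of \cite{CTV} in the preliminaries). Your additional observation that $e^{-td}$ in fact lies in $\ell^1(G)$ is a harmless strengthening that changes nothing essential.
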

\begin{proof} 
The assumption gives that $\{ \varphi_{t}\}_{t>0}$ is a net of normalized  positive definite functions in $\ell^2(G)$ converging pointwise to $1$ as $t\to 0^+$, so it follows that $G$ is amenable (cf.~the proof of \cite[Theorem 4.1]{CTV}). It is  obvious that $d$ is proper. Finally, for every $t>0$, we have that  $\varphi_{t} \in MCF(G, \sigma)$ (since $\varphi_t \in \ell^2(G)$).
\end{proof}

\begin{example} \label{Zn}
Assume that  $G^d_0:=\{ g \in G \mid d(g)=0\}$ is finite and $\sum_{g\in G\setminus G^d_0} d(g)^{-p} < \infty$ for some $p \in [1, \infty)$. We note that  $\delta(d) = 0$.

Indeed, let $t>0$. Then for every $g \in G\setminus G^d_0$ we have \[ e^{-td(g)} = d(g)^pe^{-td(g)}\, \frac{1}{d(g)^p}\]
As the function $g\mapsto d(g)^pe^{-td(g)}$ is bounded on $G$, say by $M_{t, p}$, we get that 
\[ \sum_{g\in G} e^{-td(g)} \leq \, \big|G^d_0\big| + M_{t, p}\sum_{g\in G\setminus G^d_0} \frac{1}{d(g)^p}  < \infty.\] 
Thus it follows from Proposition \ref{ell} that $G$ is amenable and $(G, \sigma)$ has the heat property w.r.t.~$d$. 

This applies for example to $G=\Relativi^n$ ($n \in \Naturali$) and $d=|\cdot|_1$, $d= |\cdot|_2$ or $d=|\cdot|_2^2$ (with a suitably chosen value of $p$ in each case), where $|m|_1:= |m_1| + \cdots +|m_n|$ and $|m|_2 := (m_1^2 + \cdots +m_n^2)^{1/2}$ for $m = (m_1, \ldots, m_n) \in \Relativi^n$. It is well-known that $d\in ND_0^+(\Relativi^n)$ for all these choices (see for example \cite[Theorem 5.7]{BeCo1} and references therein).

 Up to similarity, any $\sigma \in Z^2(\Relativi^n, \Toro)$ is of the form $\sigma(m,m')  = e^{ i \, m^t\Theta m'}$ for $m,m'\in \Relativi^n$ for some skew-symmetric $n\times n$ matrix $\Theta$ with coefficients in $[0, 2\pi)$.
 When $\Theta = 0$, we have $C_r^*(\Relativi^n, \sigma) \simeq C(\Toro^n)$, and if $d=|\cdot|_2^2$, this is precisely the situation one meets in connection with the classical heat equation on $\Toro^n$, i.e., on boxes in $\Reali^n$ with periodic boundary conditions. If $\Theta \neq 0$, then  $C_r^*(\Relativi^n, \sigma)$ is the noncommutative $n$-torus $\mathcal{A}_\Theta$ associated to $\Theta$. 
\end{example}

Similar examples arise when considering other finitely generated groups.
\begin{lemma}\label{alg-length}
 Assume $G$ is finitely generated and $S$ is a (finite) symmetric generator set for $G$. Let $L=|\cdot|_S$ be the associated word length function on $G$. 
\begin{itemize}
\item[$a)$] Set $t_0 :=\ln(|S|)/2$. Then $e^{-tL}$ belongs to $\ell^2(G)$ whenever $t > t_0$. Hence, $\delta(L) <\infty$. 
\item[$b)$] Assume $G$ has subexponential growth. Then $e^{-tL}$ lies in $\ell^2(G)$ for all $t>0$. Hence, $\delta(L)=0$. 
\end{itemize}
\end{lemma}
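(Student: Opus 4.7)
The plan is to reduce both parts to counting the elements at a given word length and then summing a geometric-type series.

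For part (a), the first step is to observe that if $S_n := \{g \in G : L(g) = n\}$, then $|S_n| \leq |S|^n$, since every element of word length exactly $n$ can be written as a product of $n$ letters drawn from $S$. Consequently
\[
\sum_{g \in G} e^{-2tL(g)} \;=\; \sum_{n=0}^\infty |S_n|\, e^{-2tn} \;\leq\; \sum_{n=0}^\infty \bigl(|S|\, e^{-2t}\bigr)^n,
\]
which is a convergent geometric series precisely when $|S|\,e^{-2t} < 1$, i.e.\ when $t > \ln(|S|)/2 = t_0$. Since $0 \leq e^{-tL(g)} \leq 1$, this gives $\|e^{-tL}\|_2 < \infty$ for all such $t$, proving the first assertion. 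The bound $\delta(L) < \infty$ follows by the same computation applied to $\sum_g e^{-sL(g)}$, which converges once $s > \ln(|S|)$, so $\delta(L) \leq \ln(|S|)$.

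For part (b), I would use the subexponential growth hypothesis: for every $b > 1$ there is some $r_1 \geq 0$ with $|\{g \in G : L(g) \leq n\}| < b^n$ for all integers $n \geq r_1$. Given an arbitrary $t > 0$, choose $b \in (1, e^{t})$, for instance $b = e^{t/2}$. Then for $n \geq r_1$ we have $|S_n| \leq |\{g : L(g) \leq n\}| < b^n$, so
\[
\sum_{n \geq r_1} |S_n|\, e^{-2tn} \;\leq\; \sum_{n \geq r_1} (b\, e^{-2t})^n,
\]
which converges since $b\, e^{-2t} < e^{t}e^{-2t} = e^{-t} < 1$. Adding the finite tail for $n < r_1$ shows $e^{-tL} \in \ell^2(G)$, as required. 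The identical argument (choosing $b \in (1, e^{s})$ given $s > 0$) yields $\sum_{g\in G} e^{-sL(g)} < \infty$ for every $s > 0$, hence $\delta(L) = 0$.

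Neither step poses any real obstacle; the mild subtlety is only to make sure one picks the auxiliary base $b$ strictly inside the right interval so that the resulting geometric series is summable, and to remember that a sum with positive entries is dominated by its tail up to a finite remainder so the finitely many terms below $r_1$ can be absorbed harmlessly.
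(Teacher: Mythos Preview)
Your proof is correct and follows essentially the same approach as the paper's: both decompose the $\ell^2$-sum over spheres $\{g : L(g)=n\}$, use the crude bound $|S_n|\le |S|^n$ for part (a), and in part (b) invoke subexponential growth to bound $|S_n|$ by $b^n$ for any $b>1$. The only cosmetic difference is that the paper phrases the convergence criterion via the radius of convergence of the power series $\sum_n |S_n|\,r^n$, whereas you compare directly with a geometric series after fixing a specific $b$; the content is identical.
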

\begin{proof}
$a$) Set $L_n=\{ g \in G : L(g) = n\}$ for each $n\in \Naturali$. Then, for any $t>0$, we have 
\[ \sum_{g\in G} \big(e^{-tL(g)}\big)^2 =  \sum_{n=1}^\infty |L_n| \, r^n\,,\] 
where $r:=e^{-2t} \in (0, 1)$. The radius of convergence $R$ of this power series in $r$ is given by $R^{-1}= \limsup_n |L_n|^{1/n}$. 
Since $|L_n| \leq |S|^n$ for every $n$, we get $R \geq |S|^{-1}$, so the series above is always convergent whenever $e^{-2t} < |S|^{-1}$, i.e., $t >  t_0:=\ln(|S|)/2$. Setting $s=2t$, we get that $\sum_{g\in G} e^{-sL(g)} < \infty$ whenever $s > \ln(|S|)$, so $\delta(L) < \infty$.

\smallskip 
$b$) The assumption gives that for any $b>1$, there exists $n_0 \in \Naturali$ such that $|L_n| < b^n$ for every $n\geq n_0$. This clearly implies that $R^{-1} \leq b$ for every $b>1$, hence that $R \geq 1$. It follows that $e^{-tL}$ lies in $\ell^2(G)$ for all $t>0$, hence that $\delta(L)=0$.
\end{proof}

\begin{proposition} \label{alg-length2} 
Let $G, S$ and $L=|\cdot|_S$ be as in the previous lemma, and let $x_0\in C_r^*(G, \sigma)$.
\begin{itemize}
\item[$i)$] Assume that $L$ is negative definite and set $x(t)=M^L_t(x_0), t\geq 0$. 
Then we have that $x(t) \in CF(G, \sigma)$ for all $t > \ln(|S|)/2$.

Moreover, if $G$ has subexponential growth, then $(G, \sigma)$ has the heat property w.r.t.~$L$. 

\item[$ii)$] Assume that $L^2$ is negative definite. Then $G$ is amenable and $(G, \sigma)$ has the heat property w.r.t.~$L^2$. 
\end{itemize}
\end{proposition}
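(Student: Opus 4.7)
The plan is to reduce both parts of the proposition to tools already at our disposal: Lemma \ref{alg-length} combined with the inclusion $\ell^2(G)\subseteq MCF(G,\sigma)$ for part $i)$, and Proposition \ref{ell} applied to $d=L^2$ for part $ii)$.

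For part $i)$, I would first observe that since $L$ is a length function, $L(e)=0$, so the hypothesis gives $L \in ND_0^+(G)$. Lemma \ref{alg-length}$(a)$ then yields $e^{-tL}\in \ell^2(G)$ for every $t > \ln(|S|)/2$. Since $\ell^2(G)\subseteq MCF(G,\sigma)$ (recalled in the preliminaries), we get $e^{-tL}\in MCF(G,\sigma)$, which by definition of $MCF$ means $M^L_t(x_0)=M_{e^{-tL}}(x_0)\in CF(G,\sigma)$. For the ``moreover'' clause, the subexponential growth hypothesis lets me invoke Lemma \ref{alg-length}$(b)$ instead to obtain the same conclusion for every $t>0$, which is exactly condition (H1) of Definition \ref{heatproperty}.

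For part $ii)$, my aim is to show that $\delta(L^2)=0$ and then appeal to Proposition \ref{ell}. Writing $L_n=\{g\in G : L(g)=n\}$ and using the crude bound $|L_n|\leq |S|^n$, I would estimate, for any $s>0$,
\[\sum_{g\in G} e^{-sL(g)^2} \,=\, \sum_{n\geq 0} |L_n|\, e^{-sn^2} \,\leq\, \sum_{n\geq 0} e^{\,n\ln|S|-sn^2},\]
which converges because the quadratic term $-sn^2$ eventually dominates the linear term $n\ln|S|$. This forces $\delta(L^2)=0$, and Proposition \ref{ell} applied to $d=L^2\in ND_0^+(G)$ then yields that $G$ is amenable, $L^2$ is proper, and $(G,\sigma)$ has the heat property w.r.t.\ $L^2$.

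I don't foresee any real obstacle: the proposition essentially packages together Lemma \ref{alg-length} and Proposition \ref{ell} applied to $L$ (respectively $L^2$). The one mild technical point is the elementary summability estimate underpinning $\delta(L^2)=0$, which is immediate from the exponential growth bound $|L_n|\leq |S|^n$ and the superexponential decay of $e^{-sn^2}$.
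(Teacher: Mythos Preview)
Your proposal is correct and follows essentially the same route as the paper: part $i)$ combines Lemma \ref{alg-length} with the inclusion $\ell^2(G)\subseteq MCF(G,\sigma)$ (the paper phrases the ``moreover'' clause via Proposition \ref{ell}, but that proposition's proof is exactly the step you spell out), and part $ii)$ shows $e^{-tL^2}\in\ell^1(G)$ for all $t>0$ and then invokes Proposition \ref{ell}. The only cosmetic difference is that the paper cites \cite[Proposition 24]{Con2} for the $\ell^1$-summability of $e^{-tL^2}$, whereas you supply the elementary estimate $\sum_n |L_n|e^{-sn^2}\leq \sum_n e^{n\ln|S|-sn^2}<\infty$ directly.
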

\begin{proof} $i$) Let $t > \ln(|S|)/2$. Then  part a) of Lemma \ref{alg-length} gives that $e^{-tL} \in \ell^2(G)$, hence that  $e^{-tL} \in MCF(G, \sigma)$. Thus, $x(t) = M_{e^{-tL}}(x_0) \in CF(G, \sigma)$.

Similarly, part b) of Lemma \ref{alg-length} in combination with Proposition \ref{ell} gives the second statement. 

$ii$) Since  $e^{-tL^2}$ lies in $\ell^1(G)$, hence in $\ell^2(G)$, for every $t>0$ (cf.~the proof of \cite[Proposition 24]{Con2}),  we can apply Proposition \ref{ell} with $d= L^2$.
\end{proof}

A typical situation where both $i$) and $ii$) in  Proposition \ref{alg-length2} can be applied is when $G= \Relativi$ and $L(m) = |m|$.  
In general, if $L$ is a word length function on a finitely generated group, it is not clear under which conditions $L$, or $L^2$ (if $G$ is amenable), is negative definite.
 
\begin{remark}
Assume $G$ is amenable. If $G$ is finitely generated, it is natural to wonder whether one can always find a negative definite word length function $L$ on $G$, so that Proposition \ref{alg-length2} can be applied. Of course, a more general question is whether there always exists some $d\in ND_0^+(G)$ such that $(G, \sigma)$ has the heat property w.r.t.~$d$. When $G$ is  finitely generated and has polynomial growth w.r.t.~some (hence any) word length function, it follows from a result of Cipriani and Sauvageot \cite{CS} that the answer to this question is positive, cf.~our discussion in Example \ref{CS}.  The square root of the function $d$ constructed in their paper is then a proper negative definite length function (but not a word length).
\end{remark}
 
\begin{example}\label{free-gp}
Let $ \mathbb{F}_k=\langle a_1, \ldots, a_k\rangle$ be a non-abelian free group ($k\geq 2$), and let $L=|\cdot|$ denote the canonical word length function on $\mathbb{F}_k$ associated to $S=\{a_1^{\pm 1}, \ldots, a_k^{\pm 1}\}$, which is negative definite \cite{Haa1}. As every  $\sigma \in Z^2(\mathbb{F}_k, \Toro)$ is similar to $1$, we may assume that $\sigma=1$. Then Lemma \ref{alg-length} tells us that $e^{-tL}\in \ell^2(\mathbb{F}_k)$ whenever $t > \ln(2k)/2$. In fact, since $\big|\{g \in \mathbb{F}_k : |g|= n\}\big| = 2k(2k-1)^{n-1}$, a  look at the proof of this lemma gives that  $e^{-tL}\in \ell^2(\mathbb{F}_k)$ if and only if $t > \ln(2k-1)/2$. Proposition \ref{alg-length2} gives that $x(t)=M^L_t(x_0) \in CF(G)$ for every $x_0\in C_r^*(\mathbb{F}_k)$ and every $t > \ln(2k-1)/2$. However, we will see in see Example \ref{free-gp2} that $\epsilon(|\cdot|) = 0$, i.e., that $\mathbb{F}_k$ has the heat property w.r.t. $|\cdot|$. 
\end{example}

\begin{remark} \label{poincare-exp}
One may wonder if there exists some (nonabelian) group $G$ such that
\begin{equation} \label{MCF_L2} MCF(G)= \ell^2(G).
\end{equation}
When $G$ is amenable, using Bojzeko's result that $CF(G) = W(G)$, it would suffice to show that if $\varphi \in MA(G)$ is such that $\varphi\widehat{x} \in \ell^1(G)$ for all $x\in {\rm vN}(G)$, then $\varphi \in \ell^2(G)$. 

For $G$ abelian, it is in fact true that the equality (\ref{MCF_L2}) holds. Indeed, as shown in  \cite[Theorem 2.1]{Edw} (see also~\cite[Theorem 2]{Helg}), if $\widehat{G}$ denotes the dual group of $G$ and $\varphi:G\to \Complessi$ is such that  $\varphi\widehat{f} \in \ell^1(G)$ for all $f\in C(\widehat{G})$, then $\varphi \in \ell^2(G)$. Since $C_r^*(G)$ is canonically isomorphic to $C(\widehat{G})$, the assertion follows. It is conceivable that (\ref{MCF_L2}) holds for every amenable group, but for the time being we will not delve any further into this topic.   

We note that if (\ref{MCF_L2}) holds, then $\epsilon(d) = \delta(d)/2$, so
$G$ has the heat property w.r.t.~$d$ if and only if $\delta(d) = 0$, and that if $\delta(d) > 0$, then we have \[\Big(\frac{\delta(d)}{2}, \infty\Big) \subseteq I^{d}_{x_0} \subseteq \Big[\frac{\delta(d)}{2}, \infty\Big)\] for every $x_0 \not \in CF(G)$. 
\end{remark}

To handle the case where $\delta(d) > 0$,  and include nonamenable groups in our discussion, we will exploit the decay behavior of groups as studied in \cite{BeCo1}.

\begin{proposition} \label{kappaMCF2}
Assume $(G, \sigma)$ has $\kappa$-decay for some $\kappa:G\to [1, \infty)$ satisfying that $\kappa \leq C e^{t_0d}$ for some $C>0$ and some $t_0>0$. Then $\varphi_t = e^{-td} \in MCF(G, \sigma)$ and $x(t)=M_t^d(x_0) \in CF(G, \sigma)$  for every $t \geq t_0$ and every $x_0\in C_r^*(G, \sigma)$. 
\end{proposition}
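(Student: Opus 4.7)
The strategy is to derive both conclusions as a direct consequence of Proposition \ref{kappaMCF} applied to $\psi = \varphi_t = e^{-td}$, so the whole task reduces to verifying that $\psi\kappa$ is a bounded function on $G$ when $t \geq t_0$. Once this is done, Proposition \ref{kappaMCF} immediately yields $\varphi_t \in MCF(G, \sigma)$, and the second assertion about $x(t) = M^d_t(x_0)$ follows at once from the definition of $MCF(G,\sigma)$, since $M_{\varphi_t} = M^d_t$ then maps $C^*_r(G, \sigma)$ into $CF(G, \sigma)$ for every choice of initial datum $x_0$.

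For the boundedness step, I would simply combine the hypothesis $\kappa(g) \leq C\,e^{t_0 d(g)}$ with the non-negativity of $d$. For any $t \geq t_0$ and any $g \in G$, one has
\[
|\varphi_t(g)|\,\kappa(g) \,=\, e^{-td(g)}\kappa(g) \,\leq\, C\,e^{-td(g)} e^{t_0 d(g)} \,=\, C\, e^{-(t-t_0) d(g)} \,\leq\, C,
\]
where in the last inequality we use that $(t - t_0)\,d(g) \geq 0$. Hence $\varphi_t \kappa \in \ell^\infty(G)$ with $\|\varphi_t \kappa\|_\infty \leq C$, which is exactly the hypothesis required in Proposition \ref{kappaMCF}.

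Invoking Proposition \ref{kappaMCF} then gives $\varphi_t \in MCF(G, \sigma)$ for every $t \geq t_0$, which is the first claim. For the second, by the very definition of $MCF(G,\sigma)$, the map $M^d_t = M_{\varphi_t}$ sends $C^*_r(G, \sigma)$ into $CF(G, \sigma)$, so $x(t) = M^d_t(x_0) \in CF(G, \sigma)$ for every $x_0 \in C^*_r(G, \sigma)$ and every $t \geq t_0$. There is no genuine obstacle to overcome here: the entire content of the proof is the elementary bound displayed above, with all the nontrivial analytic work already packaged inside Proposition \ref{kappaMCF}.
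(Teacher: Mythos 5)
Your proof is correct, and its core is the same as the paper's: reduce everything to Proposition \ref{kappaMCF} by checking that $\varphi_t\kappa$ is bounded. The only difference is in how the range $t>t_0$ is handled. The paper verifies the bound $\varphi_{t_0}\kappa\leq C$ only at $t=t_0$, concludes $\varphi_{t_0}\in MCF(G,\sigma)$, and then invokes the semigroup propagation result Proposition \ref{CFt1} (if $\varphi_s\in MCF(G,\sigma)$ and $s\leq t$ then $\varphi_t\in MCF(G,\sigma)$) to cover all $t\geq t_0$. You instead verify the decay hypothesis directly for every $t\geq t_0$ via the elementary estimate $e^{-td}\kappa\leq Ce^{-(t-t_0)d}\leq C$, which is valid since $d\geq 0$, and apply Proposition \ref{kappaMCF} separately for each $t$. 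Your route is marginally more self-contained (it does not need Proposition \ref{CFt1}, hence none of the multiplier machinery behind Proposition \ref{CFG2}), while the paper's route illustrates the general monotonicity of $\E(d,\sigma)$; both are equally valid and equally short.
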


\begin{proof} 
As $\varphi_{t_0}\kappa = e^{-t_0d}\kappa \leq C$, we get from Proposition \ref{kappaMCF} that $\varphi_{t_0} \in MCF(G, \sigma)$.  This implies that $x(t_0) = M_{t_0}^d(x_0) \in CF(G, \sigma)$ for every $x_0\in C_r^*(G, \sigma)$. It follows then from Proposition \ref{CFt1} that both assertions hold for any $t \geq t_0$. 
\end{proof}

\begin{proposition} \label{etd-decay}
Suppose that  $G$ is $e^{td}$-decaying for every $t>0$. Then $\epsilon(d, \sigma) = 0$, i.e., $(G, \sigma)$ has the heat property w.r.t.~$d$.
\end{proposition}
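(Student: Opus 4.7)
The plan is to reduce this directly to Proposition \ref{kappaMCF2}, observing that its hypotheses hold trivially with the choice $\kappa = e^{t_0 d}$ for any $t_0 > 0$. To establish $\epsilon(d, \sigma) = 0$ it suffices, in view of the equivalence between conditions (H1) and (H2), to show that $\varphi_t = e^{-td} \in MCF(G, \sigma)$ for every $t > 0$. So I would fix an arbitrary $t > 0$ and argue that $\varphi_t \in MCF(G, \sigma)$.

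Next, I pick some $t_0$ with $0 < t_0 \leq t$ and set $\kappa := e^{t_0 d}$. Since $d$ takes values in $[0, \infty)$, we have $\kappa(g) \geq 1$ for every $g \in G$, so $\kappa : G \to [1, \infty)$. By hypothesis, $G$ is $e^{t_0 d}$-decaying, i.e., $\kappa$-decaying. The estimate $\kappa \leq 1 \cdot e^{t_0 d}$ is tautological, so the hypotheses of Proposition \ref{kappaMCF2} are satisfied with this $\kappa$, the constant $C = 1$, and the chosen $t_0$. That proposition then yields $\varphi_s = e^{-sd} \in MCF(G, \sigma)$ for every $s \geq t_0$, and in particular $\varphi_t \in MCF(G, \sigma)$ since $t \geq t_0$.

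As $t > 0$ was arbitrary, this gives $\mathcal{E}(d, \sigma) = (0, \infty)$, hence $\epsilon(d, \sigma) = 0$, and $(G, \sigma)$ has the heat property w.r.t.\ $d$. There is essentially no obstacle once Proposition \ref{kappaMCF2} is in hand: the whole content of the argument is that the freedom to choose $t_0$ arbitrarily small converts a statement about sufficiently large $t$ into one about all positive $t$, since the assumed decay holds uniformly for every exponent.
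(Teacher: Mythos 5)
Your argument is correct and is exactly the paper's own proof: for each $t>0$ one chooses $t_0\leq t$, sets $\kappa=e^{t_0 d}$ and $C=1$, and applies Proposition \ref{kappaMCF2}. The paper states this in one line; your write-up simply spells out the same reduction in more detail.
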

\begin{proof}
Given any $t_0>0$, setting  $\kappa = e^{t_0d}$ and  $C=1$, we can apply Proposition \ref {kappaMCF2} to obtain the desired conclusion.
\end{proof} 

\begin{remark} If we only assume that $G$ is $e^{t_0d}$-decaying for some $t_0>0$, then we get, essentially as in the above proof, that $(G, \sigma)$ has the weak heat property. 
\end{remark}

\begin{remark} 
Proposition \ref{etd-decay} can easily be generalized as follows. Assume that for each $t>0$, $(G, \sigma)$ has $\kappa_t$-decay for some function $\kappa_t:G\to [1, \infty)$ satisfying that $\kappa_t\, e^{-td}$ is bounded. Then $(G, \sigma)$ has the heat property w.r.t.~$d$. 
\end{remark}

The concept of  H-growth was introduced in \cite{BeCo1} as a useful replacement of the usual notion of growth for nonamenable groups.  The relevance of subexponential H-growth is illustrated here by the following result.

\begin{theorem} \label{subexp}
Assume that $d$ is proper and $G$ has subexponential H-growth w.r.t.~$d$. 
Then $\epsilon(d, \sigma) = 0$, i.e., $(G, \sigma)$ has the heat property w.r.t.~$d$.
\end{theorem}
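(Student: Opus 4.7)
The plan is to reduce everything to Proposition \ref{etd-decay}: it suffices to show that $G$ is $e^{td}$-decaying for every $t>0$. Fix such a $t$. Because $d$ is proper, the annuli $A_k := \{g \in G : k \le d(g) < k+1\}$ are all finite and form a countable partition of $G$. For $x \in \Complessi(G)$, split $x = \sum_k x_k$ with $x_k := \sum_{g \in A_k} \widehat{x}(g) \lambda(g)$ (the sum being finite). The Haagerup content gives $\|x_k\| \le c(A_k)\, \|\widehat{x}|_{A_k}\|_2$, so using the triangle inequality and Cauchy--Schwarz in the discrete index $k$:
\[
\|x\| \,\le\, \sum_k c(A_k)\, \|\widehat{x}|_{A_k}\|_2 \,\le\, \Big(\sum_k c(A_k)^2 e^{-2tk}\Big)^{1/2} \Big(\sum_k e^{2tk}\,\|\widehat{x}|_{A_k}\|_2^2\Big)^{1/2}.
\]
Since $d(g) \ge k$ on $A_k$, the second factor is bounded by $\|\widehat{x}\,e^{td}\|_2$, so everything reduces to finiteness of the first factor.

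The subexponential H-growth hypothesis is what makes this first factor finite. Writing $B_r := \{g \in G : d(g) \le r\}$, the assumption lets me pick any $b$ with $1 < b < e^t$ and an index $r_1$ with $c(B_r) < b^r$ for all $r \ge r_1$. Since $A_k \subseteq B_{k+1}$ and the content is monotone in its argument, $c(A_k)^2 e^{-2tk} < b^2 (b^2 e^{-2t})^k$ for $k+1 \ge r_1$, and this is a convergent geometric series because $b^2 e^{-2t} < 1$. Hence $C_t := \big(\sum_k c(A_k)^2 e^{-2tk}\big)^{1/2} < \infty$ and one obtains $\|x\| \le C_t\, \|\widehat{x}\,e^{td}\|_2$ for all $x \in \Complessi(G)$, showing that $G$ is $e^{td}$-decaying.

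Since $t > 0$ was arbitrary, Proposition \ref{etd-decay} applies and yields $\epsilon(d, \sigma) = 0$, which is the claim. The only real work above is the annular estimate, and the main obstacle is making sure the two factors in the Cauchy--Schwarz split are balanced correctly so that the growth rate $b < e^t$ controls both $c(A_k)^2$ and $e^{-2tk}$; this is precisely why the hypothesis is stated as subexponential H-growth (any exponential base strictly below $e^t$ is permitted), which matches exactly the margin between $b$ and $e^t$ that the geometric series requires.
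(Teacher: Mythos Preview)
Your proof is correct and follows exactly the second route sketched in the paper's proof: reduce to Proposition~\ref{etd-decay} by establishing that $G$ is $e^{td}$-decaying for every $t>0$. The only difference is that the paper obtains the $e^{td}$-decay by citing \cite[Theorem~3.13, part~2)]{BeCo1}, whereas you supply a direct annular-decomposition argument (Cauchy--Schwarz against the Haagerup contents of the shells $A_k$), which is essentially the content of that cited result.
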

\begin{proof}
Using Remark \ref{summing}, the result follows from the last statement in  \cite[Theorem 5.9]{BeCo1}
Alternatively, we have that $G$ is $e^{t d}$-decaying for all $t>0$, cf.~\cite[Theorem 3.13, part 2)]{BeCo1}, 
so  the conclusion follows from Proposition \ref{etd-decay}.
\end{proof}

\begin{example} \label{free-gp2}
Let's go back to Example \ref{free-gp} dealing with a non-abelian free group $\mathbb{F}_k$ $ (2\leq k < \infty$) and its canonical word length function $L=|\cdot|$, as considered by Haagerup in his seminal paper \cite{Haa1}. Since $\mathbb{F}_k$ has subexponential H-growth (in fact,  polynomial H-growth) w.r.t.~$|\cdot|$, cf.~\cite[Example 3.12]{BeCo1}, we may apply Theorem \ref{subexp} and obtain that $\epsilon(|\cdot|) = 0$, i.e., $\mathbb{F}_k$ has the heat property w.r.t.~$|\cdot|$.This means that for any $x_0 \in C_r^*(\mathbb{F}_k)$, the Fourier series 
\begin{equation}\label{free-sol} \sum_{g\in \mathbb{F}_k} e^{-t|g|} \,\widehat{x_0}(g) \,\lambda(g)\,,\end{equation}
converges in operator norm to $x(t)=M_t^{L}(x_0)$ for {\it every} $t>0$, even if the Fourier series of $x_0$ is not convergent in operator norm. 
It is worth pointing out again that the function $g \mapsto e^{-t|g|}$ belongs to $\ell^2(\mathbb{F}_k)$ only when $t$ is sufficiently large (cf.\ Example \ref{free-gp}).  
\end{example}

\begin{example}
\label{Cox}
Let $G$ be an infinite Coxeter group with a finite generator set $S$ such that $(G, S)$ is a Coxeter system, and let $L$ be the word length function on $G$ associated to $S$. Then $L$ is negative definite and $G$ has polynomial H-growth w.r.t.~$L$, cf.~\cite[Example 5.10]{BeCo1} and references therein. Thus
Theorem \ref{subexp} gives that $G$ has the heat property w.r.t.~$L$.
\end{example}

In view of Example \ref{free-gp2}, it is natural to look at the case of free products. 

\begin{proposition} \label{freeprod}
Assume that $G_1*G_2$ is the free product of two nontrivial groups $G_1$ and $G_2$, and that each $G_j$ has polynomial H-growth w.r.t.~some proper, negative definite, integer-valued length function $L_j$ satisfying $\{ g \in G_j : L_j(g)=0\} =\{e\}$. 

Let then  $L$ be the  length function on $G_1*G_2$ induced by $L_1$ and $L_2$, i.e., $L(e) = 0 $ and \[L(g) = L_{i_1}(s_{1})+ L_{i_2}(s_{2}) + \cdots + L_{i_n}(s_{n})\] whenever  $g= s_{1}s_{2}\ldots s_{n}$, where each $s_{j} \in G_{i_j}$ for some $i_j\in \{1,2\}$, and $i_{j+1}\neq i_j$ for each $j < n$. 

Then $L$ is negative definite and $(G_1*G_2, \sigma)$ has the heat property w.r.t.~$L$ for every $\sigma$ in $Z^2(G_1*G_2, \Toro)$.
\end{proposition}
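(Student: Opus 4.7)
The plan is to verify the hypotheses of Theorem \ref{subexp} for $L$: that $L \in ND_0^+(G_1*G_2)$, that $L$ is proper, and that $G_1*G_2$ has subexponential (in fact polynomial) H-growth with respect to $L$. Once these three ingredients are established, Theorem \ref{subexp} immediately yields the heat property w.r.t.~$L$ for $(G_1*G_2, \sigma)$ for every $\sigma \in Z^2(G_1*G_2, \Toro)$.

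First, I would show that $L \in ND_0^+(G_1*G_2)$ by essentially copying the argument used in the proof of Proposition \ref{weakheatfreeprod}. For each $t > 0$, the function $e^{-tL}$ on $G_1*G_2$ coincides with the free product of the positive definite functions $e^{-tL_j}$ on the $G_j$ (for $j=1,2$), so it is positive definite by \cite[Theorem 1]{Pic}. Since this holds for every $t > 0$, Schoenberg's theorem gives that $L$ is negative definite. Properness of $L$ is then an elementary counting: since each $L_j$ is integer-valued with $L_j^{-1}(0)=\{e\}$, any reduced word $g = s_1 \cdots s_n$ with $L(g) \leq r$ has $n \leq r$ syllables, and for each alternating pattern and each composition $r_1+\cdots+r_n \leq r$ with $r_j \geq 1$ the set of admissible syllables is finite by properness of each $L_{i_j}$.

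The core of the argument, and the step I expect to be the main obstacle, is the polynomial H-growth of $G_1*G_2$ with respect to $L$. My plan is to decompose $B_r := \{g \in G_1*G_2 : L(g) \leq r\}$ into homogeneous pieces $E_{\pi, \underline{r}}$ indexed by alternating patterns $\pi=(i_1,\ldots,i_n)$ with $n \leq r$ and compositions $\underline{r}=(r_1,\ldots,r_n)$ with $r_j \geq 1$ and $\sum_j r_j \leq r$. A Haagerup-type inequality for reduced free products, in the spirit of the norm estimates used in \cite{BeCo1} to treat free groups and Coxeter groups, should control $c(E_{\pi, \underline{r}})$ by the product $\prod_j c(\{s \in G_{i_j} : L_{i_j}(s)=r_j\})$ up to a polynomial-in-$n$ correction. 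Combining this with the polynomial H-growth bounds $c(\{s \in G_{i_j} : L_{i_j}(s) \leq r_j\}) \leq K_{i_j}(1+r_j)^{p_{i_j}}$, aggregating over compositions for fixed $n$ and then summing over the two possible patterns of each length $n \leq r$, and finally combining the syllable-length layers by a Cauchy--Schwarz argument, should yield a polynomial bound $c(B_r) \leq P(r)$. The delicate point is to set up a free-product Haagerup inequality adapted to the compound length $L$ that efficiently keeps track of the alternating syllable structure, so that the polynomial H-growth of each factor is genuinely inherited by the free product with at most polynomial loss.
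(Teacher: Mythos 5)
Your overall strategy is exactly the paper's: verify that $L$ is negative definite, proper, and that $G_1*G_2$ has polynomial H-growth w.r.t.~$L$, then invoke Theorem \ref{subexp} (whose hypotheses do not involve $\sigma$, so the conclusion holds for every $\sigma$). The negative definiteness argument via Picardello's theorem and Schoenberg, recycled from Proposition \ref{weakheatfreeprod}, is precisely what the paper does, and the properness count is fine.

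The gap is in the step you yourself flag as the ``main obstacle'': the free-product Haagerup inequality controlling $c(E_{\pi,\underline{r}})$ by products of the factors' contents is asserted (``should control'', ``should yield'') but never established, and it is the entire technical content of the proposition. Setting it up from scratch is genuinely delicate --- one has to handle the cancellations that occur when reduced words are multiplied and syllables at the junction merge or collapse, which is why a naive product bound over syllables does not immediately work. The paper sidesteps this by routing through Jolissaint's property (RD): polynomial H-growth of each $G_j$ w.r.t.~$L_j$ implies that each $G_j$ has property (RD) w.r.t.~$L_j$ (by \cite[Theorem 3.13, part 1)]{BeCo1}), so the hypotheses of \cite[Theorem 2.2.2]{Jol} on free products (amalgamated over a finite subgroup) are met. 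Crucially, the paper does not just cite the statement of Jolissaint's theorem but extracts from its \emph{proof} the layer-wise estimate $\|(f*\xi)\chi_m\|_2 \leq C(1+k)^{s+2}\|f\|_2\|\xi\|_2$ for $f$ supported on the sphere $A_k=\{L=k\}$, which then yields $\|\lambda(f)\|\leq K(1+k)^{s+3}\|f\|_2$ and hence $c(A_k)\leq K(1+k)^{s+3}$; this sphere-wise bound is what \cite[Lemma 3.11]{BeCo1} needs to conclude polynomial H-growth. If you want to complete your proof without reproving Jolissaint's combinatorics, this is the citation chain to use; otherwise you must actually carry out the reduced-word analysis you sketch, and that is where all the work lies.
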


\begin{proof}
It is essentially already known that $L$ is negative definite; indeed, this can be deduced in the same way as in the proof of Proposition \ref{weakheatfreeprod}.
 
It is not difficult to check that $L$ is proper. Moreover, $G$ has polynomial H-growth w.r.t.~$L$. This can be deduced from \cite{Jol}, as was very briefly mentioned in \cite[Example 3.12; 4)]{BeCo1}. For the ease of the reader, we provide some more details below. For $k\in \Naturali \cup \{0\}$, set $A_k=\{ g\in G : L(g) = k\}$. Then we claim that there exist  $K, p>0$ such that $c(A_k) \leq K (1+k)^p$ for all $k\in \Naturali$ (which will show the desired assertion, cf.~\cite[Lemma 3.11]{BeCo1}). By definition of $c(A_k)$, this amounts to showing that there exists some $K,p>0$ such that
\[\|\lambda(f)\|\leq K (1+k)^p \|f\|_2 \] 
whenever $f\in C_c(G)$ has support in $A_k$.
 
We note that, as each $G_j$ has polynomial H-growth w.r.t.~$L_j$, each $G_j$ has Jolissaint's property (RD) w.r.t.~$L_j$ (this follows from \cite[Theorem 3.13, part 1)]{BeCo1}). Thus the assumptions of \cite[Theorem 2.2.2]{Jol} are satisfied. From the proof of \cite[Theorem 2.2.2]{Jol} (cf.~the bottom of page 184) we deduce that there exist $C, s>0$ such that for any nonnegative integers $k, l, m$ satisfying $|k-l| \leq m \leq k+l$, any $f\in C_c(G)$ having support in $A_k$ and any $\xi$ having support in $A_l$, we have
\[ \|(f*\xi)\chi_m\|_2\,\leq \,C\,(1+k)^{s+2}\, \|f\|_2 \, \|\xi\|_2\,,\]
where $\chi_m$ denote the characteristic function of $A_m$. Now, set $r= s+2$. Then, using this observation, we get, as in the proof of \cite[Proposition 1.2.6, $(4) \Rightarrow$ (1)]{Jol}, that there exists $K >0$ (depending on $C$) such that for any $f\in C_c(G)$ having support in $A_k$ and any $\xi\in C_c(G)$, we have 
\[ \|f*\xi\|_2 \, \leq \, K \, (1+k)^{r+1} \, \|f\|_2 \, \|\xi\|_2\,.\]
Since $\lambda(f) \xi = f*\xi$, where $\lambda(f)= \sum_{g\in G} f(g) \lambda(g)$, it follows that $\|\lambda(f)\| \leq K (1+k)^{r+1} \|f\|_2  $ for any $f\in C_c(G)$ with support in $A_k$, which shows the desired assertion (with $p= r+1 = s+3$).

Theorem \ref{subexp} can now be applied to give the conclusion. 
\end{proof}

We note that the results of Jolissaint  we have been using in the proof of Proposition \ref{freeprod} actually hold for an amalgamated free product $G= G_1*_AG_2$ of groups, with $A$ finite, if one further assumes that $A \simeq \{ g \in G_j: L_j(g) = 0\} $ and $L_j$ is bi-$A$-invariant for each $j=1,2$. When the induced length function $L$  on $G$ is bi-$A$-invariant, one can then deduce from  \cite[Theorem 1]{Pic} that $L$ is negative definite on $G$, and $(G, \sigma)$ will have the heat property w.r.t.~$L$.
   
  We also note that one may associate a product 2-cocycle $\sigma_1*\sigma_2 $ on $G_1*G_2$ to each pair $(\sigma_1, \sigma_2)$, where $\sigma_j \in Z^2(G_j, \Toro)$ for $j=1, 2$. Moreover,  $\sigma\in Z^2(G_1*G_2, \Toro)$ arises in this way, up to similarity. We refer e.g.~to \cite{Oml} for details about these facts. 
   
   \begin{example} To illustrate Proposition \ref{freeprod}, let us assume that $G_1$ and $G_2$ are finite and nontrivial. Let then $L_j$ be the trivial length function on $G_j$ given by $L_j (g) = 1$ if $g\in G_j\setminus \{e_j\}$ and $L_j(e_j) = 0$ for $j=1,2$. It is obvious that each $L_j$ is proper and negative definite on $G_j$. Hence the induced length function $L$ on $G_1*G_2$ is negative definite. (This could also be deduced from the discussion in \cite[Section 6]{Boz89}.) We note that $L(g) = n$ whenever  $g= s_{1}s_{2}\ldots s_{n}$, where each $s_{j} \in G_{i_j}$ for some $i_j\in \{1,2\}$, and $i_{j+1}\neq i_j$ for each $j < n$, so $L$ is sometimes called the block length function on $G_1*G_2$.  Since each $G_j$, being finite, has polynomial growth w.r.t.~$L_j$, we can conclude from Proposition \ref{freeprod} that $(G_1*G_2, \sigma_1*\sigma_2)$ has the heat property w.r.t.~$L$ for every choice of $\sigma_1 \in Z^2(G_1, \Toro)$ and $ \sigma_2 \in Z^2(G_2, \Toro)$. 
   
   As a concrete example, for $j=1, 2$,   pick $n_j \in \Naturali, n_j \geq 2$,  set $G_j=\Relativi_{n_j} \times \Relativi_{n_j}$, and  let $\sigma_j\in Z^2(G_j, \Toro)$ be given by \[\sigma_j((p,q), (r,s)) =  e^{i \frac{2\pi}{n_j} ps},\quad p,q,r,s \in \Relativi_{n_j}.\]  It is well-known that $C_r^*(G_j, \sigma_j) \simeq M_{n_j}(\Complessi)$ and $C_r^*(G_1*G_2, \sigma_1*\sigma_2) \simeq M_{n_1}(\Complessi)*M_{n_2}(\Complessi)$.  
\end{example} 

\begin{example}\label{SL2Z} Let $G=SL(2, \Relativi) \simeq  \Relativi_4 *_{\Relativi_2}\Relativi_6$, where \[\Relativi_2\simeq \{ m \in \Relativi_4: m\in \{0, 2\}\} \simeq \{ n \in \Relativi_6: n\in \{0, 3\}\}.\] Define length functions $L_1$ on $\Relativi_4$ and $L_2$ on $\Relativi_6$ by 
\[L_1(m) =\begin{cases} 1 &  \text{if $m=1$ or $3$},\\ 0 &  \text{if $m=0$ or $2$} \end{cases}, \quad L_2(n) =\begin{cases} 1 &  \text{if $n=1,2,4$ or $5$},\\ 0 &  \text{if $n=0$ or $3$} \end{cases}\] 
Then $\Relativi_2= \{ m \in \Relativi_4 : L_1(m) = 0\} = \{ n \in \Relativi_6 : L_2(n) = 0\}$, and $L_1$ and $L_2$ are clearly $\Relativi_2$-invariant.
So adapting the proof given in the case of a free product (Proposition \ref{freeprod}), we get that $G$ has polynomial H-growth w.r.t.~$L$, where $L$ is the  length function on $G$ induced by $L_1$ and $L_2$. As $L_1$ and $L_2$ are both negative definite (since $1-L_j = \chi_{\Relativi_2}$ is positive definite),  $L$ is negative definite (being bi-$\Relativi_2$-invariant). We can therefore conclude from (the extended version of) Proposition \ref{freeprod} that $SL(2, \Relativi)$ has the heat property w.r.t.~$L$. 
\end{example}
 
\begin{example} \label{CS}
If $G$ is amenable and $d$ is proper, then Theorem \ref{subexp} says that $(G, \sigma)$ has the heat property w.r.t.~$d$ whenever $G$ has subexponential growth w.r.t.~$d$. Assume that $G$  is finitely generated and has polynomial growth w.r.t.~some 
(hence any) word length function $L$. We would like to apply Theorem \ref{subexp} in this context, but is not clear under which conditions $L$ can be chosen to be negative definite. However, Cipriani and Sauvageot have shown, cf.~\cite[Theorem 1.1]{CS}, that for any $\delta > \delta_h$ (where $\delta_h$ denotes the homogeneous dimension of $G$),  there exists a proper $d \in ND_0^+(G)$ and some $k_0 \in \Naturali$ such that 
\[ \big|\{ g \in G: d(g) \leq k\}\big| \leq {O}(k^\delta) \quad \text{whenever } k \geq k_0.\]
Thus it follows that $G$ has polynomial growth w.r.t.~$d$, and we can conclude from Theorem \ref{subexp} that $(G, \sigma) $ has the heat property w.r.t.~$d$. 
This said, it is not clear to us whether the Poincar\'e exponent $\delta(d)$ is equal to zero in this situation. If not,  this would imply that $MCF(G, \sigma) \neq \ell^2(G)$.
\end{example}

\begin{remark} In view of Example \ref{CS}, the following problem seems natural.  Assume that $G$ is finitely generated and has subexponential growth, but not polynomial growth (w.r.t.~some word length function), i.e., $G$ has intermediate growth ($G$ could for instance be the first Grigorchuk group \cite{DH}). Then we ask:  

\medskip \noindent \emph{Does  there exists a proper 
$d\in ND_0^+(G)$ such that $G$ has subexponential  growth w.r.t.~$d$ ?}
 
 \medskip   A positive answer would mean that we can apply Theorem \ref{subexp} and obtain that $G$ has the heat property w.r.t~$d$. Note that if one can find some $d\in ND_0^+(G)$ such that $|\cdot|_S \lesssim d$ for some finite generating set $S$, then $d$ will be proper and $G$ will have subexponential growth w.r.t.~$d$. However, when this happens, we will also have that $\delta(|\cdot|_S) = 0$ by Lemma \ref{alg-length} b), which will imply that $\delta(d)=0$, hence proving that $G$ has the heat property w.r.t~$d$ in a more direct way.

 Curiously enough, by a suitable modification of the arguments given in the proof of \cite[Theorem 1.1]{CS}, we are able to show that the answer to the question above is affirmative for any finitely generated group $G$ with  finite symmetric generator set $S$ satisfying that 
\[\big|\{ g \in G: |g|_S \leq k\}\big| \leq e^{Ck^\alpha} \quad \text{for all } k \in \Naturali\]
for some $C>0$ and some $0< \alpha <1/2$. However, the proof breaks down if one assumes instead that $1/2\leq \alpha < 1$. According to some longstanding conjecture, any such group with $0< \alpha <1/2$ must have polynomial growth (see for instance \cite[Section 10]{Gri}). 
\end{remark}

\begin{example} \label{SL2} Consider $G= \Relativi^2\rtimes SL(2, \Relativi)$, which is a group without the Haagerup property and without property (T), cf.~\cite{CCJJV}. Combining Example \ref{SL2Z} with Proposition \ref{semidirect}, we get that $G$ has the weak heat property. We can in fact show that it satisfies a stronger property. 

Indeed, let $\pi:  \Relativi^2\rtimes SL(2, \Relativi) \to SL(2, \Relativi)$ denote the natural quotient map and let $L$ be the negative definite length function on $SL(2, \Relativi)$ constructed in Example \ref{SL2Z}. We can then extend $L$ to a length function $L'$ on $G= \Relativi^2\rtimes SL(2, \Relativi)$ by setting $ L' := L\circ \pi$. This length function is clearly unbounded and negative definite, but not proper. However, since $SL(2, \Relativi)$ has the heat property w.r.t.~$L$,  we can use the proof of Lemma \ref{subgroup} (with $H= SL(2, \Relativi)$ and $d_H=L$) to get that for each $x_0\in C_r^*(SL(2,\Relativi)) \subseteq C_r^*(\Relativi^2\rtimes SL(2, \Relativi))$ and every $t>0$, we have that $M_t^{L'}(x_0) \in CF(\Relativi^2\rtimes SL(2, \Relativi))$, i.e., we have $(0, \infty) \subseteq I_{x_0}^{L'}$. This shows that  $\Relativi^2\rtimes SL(2, \Relativi)$ satisfies (a strong form of) the weak heat property, which could be thought as some kind of ``relative heat property''. 
 
 Besides, we note that $\Relativi^2\rtimes SL(2, \Relativi)$ is $K$-amenable (as follows from \cite[Proposition 3.3 and Corollary 4.2]{JuVal}).
\end{example}

\section{Around the heat equation}
  Throughout this section,  we consider  a countably infinite group $G$ and $\sigma \in Z^2(G, \Toro)$.
\subsection{} 
Let  $ \Complessi(G, \sigma)$ denote the $*$-subalgebra of $C_r^*(G, \sigma)$ generated by $\Lambda_\sigma(G)$, and consider a function $d:G\to [0, \infty)$. Then the assignment
\[x\mapsto \sum_{g\in G}  -d(g)\,\widehat{x}(g) \Lambda_\sigma(g)\]
gives a linear operator $H_d$ from $ \Complessi(G, \sigma)$ into itself. Let $\mathcal{D}$ be a subspace of $C_r^*(G,\sigma)$ containing $\Complessi(G, \sigma)$ (so $\mathcal{D}$ is dense in  $C_r^*(G,\sigma)$), and let   $H^\mathcal{D}_d: \mathcal{D} \to C_r^*(G,\sigma)$ be a  linear operator (usually unbounded) which extends $H_d$.

We will say that a function $u:[0, \infty) \to C_r^*(G, \sigma)$ is a solution of the \emph{heat problem on  $C_r^*(G, \sigma)$ associated to $H^\mathcal{D}_d$ and $x_0\in C_r^*(G, \sigma)$}, whenever $u$ satisfies the following conditions: 
\begin{itemize}
\item $u(0) = x_0$, 
\item $u(t) \in \mathcal{D}$ for every $t >0$,
\item $u$ is differentiable on $(0, \infty)$ and  $u'(t) = H^\mathcal{D}_d(u(t))$ \  for every $t>0$,
\item $\lim_{t\to 0^+} \|u(t) - x_0\| = 0$ (i.e., $u$ is continuous at $t=0$).
\end{itemize}

\subsection{} Let $\Delta$ denote the Laplace operator on $C^\infty(\Toro)$, $f_0  \in C(\Toro)$, and consider the associated heat problem, i.e., 
 \[v(0) = f_0,\, \,v'(t) = \Delta(v(t))\, \text{ for every $ t>0$},\, \text{ and } \lim_{t\to 0^+} \|v(t) - f_0\|_\infty = 0\,.\]
 As is well known (see for example \cite[p.~63--65]{DMcK}), Fourier's original approach can be made rigorous in this case, and the unique solution $v(t)$
 is given by $v(0)=f_0$, and for each $t>0$ by the uniformly convergent series
 \[v(t) = \sum_{m \in \Relativi}  e^{-tm^2} \widehat{f_0}(m) \, e_m\,,\] 
 where $e_m(\theta):= e^{im\theta}$  for $m\in \Relativi $ and $\theta\in [0, 2\pi)$.
 
 Let $\Phi$ denote the canonical isomorphism between  $C(\Toro)$ and $C_r^*(\Relativi)$, mapping $e_m$ to $\lambda(m)$ for each $m \in \Relativi$. Then $\Phi$ maps $C^{\infty}(\Toro)$ onto  $\mathcal{S} = \big\{ x \in C_r^*(\Relativi) : \widehat{x} \text{ is rapidly decreasing}\big\}$, so 
 the heat problem above corresponds to the heat problem on $C_r^*(\Relativi)$ associated to $H^\mathcal{S}_d$ and $x_0:= \Phi(f_0) \in C_r^*(\Relativi)$, where 
 $d:\Relativi\to [0, \infty)$ is (the negative definite function) given by $d(m) = m^2$. The (unique) solution of this problem is then given by
\[u(t) =  \sum_{m \in \Relativi}  e^{-tm^2}\, \widehat{x_0}(m) \lambda(m),\]
the series being convergent in operator norm for every $t>0$. 

\subsection{} 
We recall (see for example \cite{App, BR}) that a family $\{T_t\}_{t \geq 0}$ of bounded linear operators on a Banach space $X$ is called a \emph{$C_0$-semigroup} if it satisfies that
\begin{itemize}
\item[] a)  $T_{s+t} = T_sT_t$ for all $s, t \geq 0$, 
\item[]b) $T_0 = {\rm id}_X$, 
\item[] c) the map $t\mapsto T_t(x)$ from $[0, \infty)$ to $X$ is (norm-) continuous for every $x\in X$. 
\end{itemize}
Note that if a) and b) are satisfied, then 
c) holds if and only if  $\|T_t(x)-x\| \rightarrow 0$ when ${t\to 0^+}$ for every $x \in X$, cf.~\cite[Section 1.2]{App}. Moreover, if $\sup_{t\geq 0} \|T_t\| < \infty$, then 
it suffices to check that this holds on a dense subspace of $X$. 

Note also that c) holds if one only assumes that the map $t\mapsto T_t(x)$ is weakly continuous for every $x\in X$ (see 
\cite[Section 3.1.2]{BR}). 

\medskip 
Assume now that $d \in ND_0^+(G)$. For each $t\geq 0$, we let $M_t^d$ denote the bounded operator on $C_r^*(G, \sigma)$ associated to the multiplier $e^{-td}$.Then, as is well-known, $\{M_t^d\}_{t\geq 0}$ is a $C_0$-semigroup on $C_r^*(G, \sigma)$. Indeed, one easily sees that a) and b) hold. Further, if $x \in \Complessi(G, \sigma)$, then it is elementary to check that $\lim_{t\to 0^+} M_t^d(x) = x$.  As each $M_t^d$ is unital and completely positive,  we have $\|M_t^d\| = M_t^d(I) = 1$ for all $t\geq 0$. Thus it follows that c) also holds.

The general theory of $C_0$-semigroups  gives that
\[\widetilde{\mathcal{D}} :=\Big\{ x \in C_r^*(G,\sigma) : \lim_{t\rightarrow 0^+} \frac{M_t^d(x)-x}{t} \text{ exists in }  C_r^*(G,\sigma) \Big\}\]
is a dense subspace of $C_r^*(G, \sigma)$. Moreover, the generator $H^{\widetilde{\mathcal{D}}}_d:\widetilde{\mathcal{D}}\to C_r^*(G, \sigma)$ of $\{M_t^d\}_{t\geq 0}$, which is defined by 
\[H^{\widetilde{\mathcal{D}}}_d(x) = \lim_{t\rightarrow 0^+} \frac{M_t^d(x)-x}{t} \quad \text{for every } x \in\widetilde{\mathcal{D}},\]
is a closed linear operator, and we have
\begin{itemize}
\item $M_t^d(\widetilde{\mathcal{D}}) \subseteq \widetilde{\mathcal{D}}$ and $M_t^d (H^{\widetilde{\mathcal{D}}}_d (x)) =  H^{\widetilde{\mathcal{D}}}_d (M_t^d(x))$ for all $t \geq 0$ and all $x \in\widetilde{\mathcal{D}}$, 
\item for each $x_0 \in\widetilde{\mathcal{D}}$, the map $u:[0, \infty) \to C_r^*(G, \sigma)$ given by $u(t) := M_t^d(x_0)$ is the (unique) solution of the initial value problem  
\[u'(t) = H^{\widetilde{\mathcal{D}}}_d(u(t)) \text{ for } t>0, \quad u(0) = x_0.\]
\end{itemize}
An alternative description of $\widetilde{\mathcal{D}}$ is that it consists of those $x \in C_r^*(G,\sigma)$ for which there exists a $y\in C_r^*(G, \sigma)$ such that  \[\lim_{t\rightarrow 0^+} \psi\Big(\frac{M_t^d(x)-x}{t}\Big) = \psi(y)\] for all $\psi\in C_r(G, \sigma)^*$, in which case we have $H^{\widetilde{\mathcal{D}}}_d(x)=y$ (cf.~\cite[Corollary 3.1.8]{BR}).

Now, if $x \in \Complessi(G, \sigma)$, so $x=\sum_{g\in F} \widehat{x}(g) \Lambda_\sigma(g)$ for  a finite subset $F$ of $G$, we have 
\[ H^{\widetilde{\mathcal{D}}}_d(x) =  \lim_{t\rightarrow 0^+} \frac{1}{t} \sum_{g\in F}  \big(e^{-td(g)}-1\big)\, \widehat{x}(g) \Lambda_\sigma(g) = - \sum_{g\in F} d(g) \widehat{x}(g) \Lambda_\sigma(g) = H_d(x),\] 
i.e.,  $H^{\widetilde{\mathcal{D}}}_d$ extends $H_d$. Thus the function $u$, defined on $[0, \infty)$, 
is a solution of the heat problem on  $C_r^*(G, \sigma)$ associated to $H^{\widetilde{\mathcal{D}}}_d$ and $x_0$ whenever $x_0$ lies in $\widetilde{\mathcal{D}}$. 

However, if $x_0 \not\in \widetilde{\mathcal{D}}$ and $t>0$, then  it is not clear when $u(t)=M_t^d(x_0)$ belongs to $\widetilde{\mathcal{D}}$. 
We are interested in solving the heat problem for every initial datum $x_0 \in C_r^*(G, \sigma)$. As it is not easy to give a concrete description of the domain 
$\widetilde{\mathcal{D}}$ one would like to find a \emph{core}  for $H^{\widetilde{\mathcal{D}}}_d$ (that is, a subspace $\mathcal{C}$ of $\widetilde{\mathcal{D}}$ such that $H^{\widetilde{\mathcal{D}}}_d$ is the closure of its restriction to $\mathcal{C}$). Since $\{M_t^d\}_{t\geq 0}$ is a $C_0$-semigroup, \cite[Theorem 1.3.18]{App}, or \cite[Corollary 3.1.7]{BR}, tells us that it suffices to find a dense subspace $\mathcal{C}$ of $C_r^*(G, \sigma)$ such that $\mathcal{C}\subseteq\widetilde{\mathcal{D}}$ and  $M_t^d(\mathcal{C}) \subseteq \mathcal{C}$ for all $t> 0$. 
We discuss such a core $\mathcal{C}$ in the next subsection.

\subsection{} \label{CH}
Set \[\mathcal{C}:= \big\{ x\in C_r^*(G, \sigma) : \sum_{g\in G} d(g) \widehat{x}(g) \Lambda_\sigma(g) \, \text{ is convergent}\big\}.\] Clearly, $\mathcal{C}$ is a subspace of $C_r^*(G, \sigma)$ such that $\Complessi(G, \sigma) \subseteq \mathcal{C}$, and  $H^\mathcal{C}_d: \mathcal{C} \to C_r^*(G, \sigma)$ defined by 
\[H^\mathcal{C}_d(x) := -\sum_{g\in G} d(g) \widehat{x}(g) \Lambda_\sigma(g)\]  is a linear map.

We note that $M_t^d(\mathcal{C}) \subseteq \mathcal{C}$ for all $t>0$. Indeed, if $x_0 \in \mathcal{C}$, $t> 0$, and $u(t):=M_t^d(x_0)$, then
\[\sum_{g\in G} d(g)\widehat{u(t)}(g) \Lambda_\sigma(g) =  \sum_{g\in G} e^{-td(g)} \, d(g) \widehat{x_0}(g) \Lambda_\sigma(g)\]
is convergent by Proposition \ref{bdprop}. Thus,  $u(t) \in \mathcal{C}$.

It follows that $\mathcal{C}$ will be a core for $H^{\widetilde{\mathcal{D}}}_d$ whenever $\mathcal{C} \subseteq\widetilde{\mathcal{D}}$. 
We will use this fact in the proof of Proposition \ref{Core}.

\medskip Now, let $x \in \mathcal{C}$ and set $y(t):=  \frac{1}{t}(M_t^d(x)-x)$ for each $t>0$. Define $k: [0, \infty) \to \Reali$ by \[ k(a) = \begin{cases} \frac{1}{a}(1-e^{-a}) & \text{ if } a>0,\\ 1 & \text{ if } a=0.\end{cases}\]
It is easy to see that $0\leq k(a) \leq 1 $ for every $a\geq 0$. Thus, the map $\eta_t: G \to \Reali$ given by $\eta_t(g) := - k(td(g))$  is bounded for every $t>0$. 
Let now $t>0$. Note that 
\begin{equation} \label{Fy} 
\widehat{y(t)}(g)  = \frac{e^{-td(g)} -1}{t} \, \widehat{x}(g) = \eta_t(g) \, d(g) \widehat{x}(g) \quad\text{ for all } g\in G.
\end{equation}  
Further, since $\sum_{g\in G} d(g)\, \widehat{x}(g) \Lambda_\sigma(g)$ is convergent (by definition of $\mathcal{C}$), Proposition \ref{bdprop} gives that the series
\[ \sum_{g\in G} \eta_t(g) \, d(g) \widehat{x}(g)\Lambda_\sigma(g) \, \text{ is also convergent}.\]
As this series is the Fourier series of $y(t)$, cf.~(\ref{Fy}), we get that 
\[y(t) = \sum_{g\in G}  \frac{1}{t}(e^{-td(g)} -1) \, \widehat{x}(g)\Lambda_\sigma(g)\]
the series on the right-hand side being convergent. Thus, we have shown the following lemma.

\begin{lemma} \label{diff} Assume that $x\in \mathcal{C}$. Then for every $t>0$ we have 
\[ \frac{1}{t}(M_t^d(x)-x) = \sum_{g\in G}  \frac{1}{t}(e^{-td(g)} -1) \, \widehat{x}(g)\Lambda_\sigma(g)\]
$($where the series on the right-hand side is convergent$)$.
\end{lemma}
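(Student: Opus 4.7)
The plan is to compute the Fourier coefficients of $y(t) := \frac{1}{t}(M_t^d(x)-x)$ and then invoke the bounded-multiplier characterization of unordered convergence (Proposition \ref{bdprop}, equivalence of $i)$ and $v)$) to recognize the claimed series as convergent to $y(t)$.

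First, I would observe that since $M_t^d = M_{\varphi_t}$ acts on Fourier coefficients by multiplication by $\varphi_t(g) = e^{-td(g)}$, one immediately has
\[
\widehat{y(t)}(g) \;=\; \frac{e^{-td(g)}-1}{t}\,\widehat{x}(g) \;=\; -k(td(g))\, d(g)\,\widehat{x}(g) \;=\; \eta_t(g)\, d(g)\,\widehat{x}(g),
\]
where $k(a) = (1-e^{-a})/a$ (with $k(0)=1$) and $\eta_t(g) := -k(td(g))$. A quick estimate using $e^{-a} \geq 1 - a$ for $a \geq 0$ shows $0 \leq k(a) \leq 1$, so $\eta_t$ is a bounded function on $G$ for every fixed $t>0$.

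Next, since $x \in \mathcal{C}$, by definition the series $\sum_{g\in G} d(g)\,\widehat{x}(g)\,\Lambda_\sigma(g)$ converges in operator norm (unorderedly). Applying the equivalence $i) \Leftrightarrow v)$ in Proposition \ref{bdprop} to this convergent series and the bounded function $\eta_t \in \ell^\infty(G)$ yields that
\[
\sum_{g\in G} \eta_t(g)\, d(g)\,\widehat{x}(g)\,\Lambda_\sigma(g) \;=\; \sum_{g\in G} \frac{1}{t}\bigl(e^{-td(g)}-1\bigr)\,\widehat{x}(g)\,\Lambda_\sigma(g)
\]
is also convergent in operator norm.

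Finally, to identify the sum with $y(t)$, I would note that if $z \in C_r^*(G,\sigma)$ denotes the sum of this operator-norm convergent series, then the series also converges to $z$ w.r.t.\ $\|\cdot\|_\tau$ (since $\|\cdot\|_\tau \leq \|\cdot\|$), and hence $\widehat{z}(g)$ equals the $g$-th term's Fourier coefficient, namely $\frac{1}{t}(e^{-td(g)}-1)\widehat{x}(g) = \widehat{y(t)}(g)$. Since the map $w \mapsto \widehat{w}$ is injective on $\mathrm{vN}(G,\sigma)$, this forces $z = y(t)$, which is exactly the stated identity. The only conceptual step is the application of Proposition \ref{bdprop}; everything else is bookkeeping on Fourier coefficients.
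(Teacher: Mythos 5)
Your proof is correct and follows essentially the same route as the paper: the same factorization $\widehat{y(t)}(g)=\eta_t(g)\,d(g)\,\widehat{x}(g)$ with $\eta_t=-k(td)$ bounded, the same appeal to the $i)\Leftrightarrow v)$ equivalence in Proposition \ref{bdprop}, and the same identification of the operator-norm limit with $y(t)$ via the $\|\cdot\|_\tau$-convergence of Fourier series. Your last step merely makes explicit what the paper compresses into ``this series is the Fourier series of $y(t)$''.
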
 

\begin{proposition}\label{Core}
$\mathcal{C}$ is a core for $H_d^{\widetilde{D}}$, and $H_d^\mathcal{C}$ is the restriction of $H^{\widetilde{\mathcal{D}}}_d$ to $\mathcal{C}$.
\end{proposition}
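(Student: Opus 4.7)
The plan is to first show that $\mathcal{C}\subseteq \widetilde{\mathcal{D}}$ with $H_d^{\widetilde{\mathcal{D}}}(x) = H_d^\mathcal{C}(x)$ for every $x\in \mathcal{C}$, and then to invoke the core criterion for $C_0$-semigroups recalled in the previous subsection.

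For the first (and main) point, fix $x \in \mathcal{C}$ and recall from Lemma \ref{diff} that, for every $t>0$,
\[\frac{1}{t}\bigl(M_t^d(x)-x\bigr) - H_d^\mathcal{C}(x) = \sum_{g\in G} \bigl(1-k(td(g))\bigr)\, d(g)\, \widehat{x}(g)\, \Lambda_\sigma(g),\]
with all series converging unconditionally in operator norm. The strategy is to split the sum on the right into a finite piece where pointwise convergence $k(td(g))\to 1$ as $t\to 0^+$ handles things directly, and a tail which we control \emph{uniformly in $t$}. The essential observation for the tail is that $k(a)=\int_0^1 e^{-sa}\,ds$, so for every $t>0$ the function $g\mapsto k(td(g))=\int_0^1 e^{-std(g)}\,ds$ is a convex combination (integral) of the positive definite functions $g\mapsto e^{-std(g)}$ provided by Schoenberg, hence is itself positive definite. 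In particular $k(td)$ lies in $MA(G,\sigma)$ with $\|M_{k(td)}\|\leq 1$, and by Proposition \ref{CFG2} the operator $M_{k(td)}$ preserves tail estimates on Fourier series.

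The key step then runs as follows. Given $\varepsilon>0$, since $z:=\sum_{g\in G} d(g)\widehat{x}(g)\Lambda_\sigma(g)$ converges in $\mathcal{C}\subseteq CF(G,\sigma)$, Cauchy's criterium yields a finite set $F_\varepsilon\subseteq G$ such that $\bigl\|\sum_{g\in F} d(g)\widehat{x}(g)\Lambda_\sigma(g)\bigr\|<\varepsilon$ for every finite $F\subseteq G\setminus F_\varepsilon$. Applying the contraction $M_{k(td)}$ gives the same bound, uniformly in $t>0$, for the twisted tails $\bigl\|\sum_{g\in F} k(td(g))\, d(g)\widehat{x}(g)\Lambda_\sigma(g)\bigr\|$. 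On the finite piece $F_\varepsilon$, the triangle inequality and pointwise convergence $k(td(g))\to 1$ show that
\[\Bigl\|\sum_{g\in F_\varepsilon}\bigl(1-k(td(g))\bigr)\,d(g)\widehat{x}(g)\Lambda_\sigma(g)\Bigr\| < \varepsilon\]
for all sufficiently small $t>0$. Combining with the uniform tail estimate on $G\setminus F_\varepsilon$ (applied to both $k(td)\cdot d\widehat{x}$ and $d\widehat{x}$), we conclude that $\bigl\|\tfrac{1}{t}(M_t^d(x)-x) - H_d^\mathcal{C}(x)\bigr\|\to 0$ as $t\to 0^+$. This shows $x\in\widetilde{\mathcal{D}}$ and $H_d^{\widetilde{\mathcal{D}}}(x)=H_d^\mathcal{C}(x)$.

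For the second point, the excerpt already records that $\Complessi(G,\sigma)\subseteq \mathcal{C}$ (so $\mathcal{C}$ is dense in $C_r^*(G,\sigma)$) and $M_t^d(\mathcal{C})\subseteq \mathcal{C}$ for every $t\geq 0$. Combined with the inclusion $\mathcal{C}\subseteq \widetilde{\mathcal{D}}$ just established, the core criterion cited from \cite[Theorem 1.3.18]{App} (or \cite[Corollary 3.1.7]{BR}) applies and delivers that $\mathcal{C}$ is a core for $H_d^{\widetilde{\mathcal{D}}}$. The main obstacle is the uniform-in-$t$ tail control in the first step; the integral representation $k(a)=\int_0^1 e^{-sa}ds$, which makes $k(td)$ a positive definite multiplier of norm $\leq 1$, is precisely what makes the Cauchy-type tail bound survive multiplication by the factor $k(td(g))$ and allows the limit to be interchanged with the unordered sum.
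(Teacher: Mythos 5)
Your argument is correct, but it follows a genuinely different route from the paper's. The paper does not prove norm convergence of $\tfrac{1}{t}(M_t^d(x)-x)$ to $H_d^{\mathcal{C}}(x)$ directly; instead it verifies only \emph{weak} convergence, testing against an arbitrary $\psi\in C_r^*(G,\sigma)^*$ and invoking the characterization of the generator domain by weak differentiability (\cite[Corollary 3.1.8]{BR}). The required interchange of limit and sum is then handled by the scalar dominated convergence theorem, the dominating function $g\mapsto d(g)\,|\widehat{x}(g)\,\psi(\Lambda_\sigma(g))|$ being summable because unordered convergence of a scalar series is equivalent to absolute convergence. Your proof instead establishes convergence in operator norm by a finite/tail splitting, with the uniform-in-$t$ tail bound extracted from the representation $k(a)=\int_0^1 e^{-sa}\,ds$, which exhibits $k(td)$ as a normalized positive definite function and hence as a unital completely positive (so contractive) multiplier; applying this contraction to the finite partial sums furnished by Cauchy's criterium for $\sum_g d(g)\widehat{x}(g)\Lambda_\sigma(g)$ gives exactly the uniform control you need. (Your citation of Proposition \ref{CFG2} is slightly off target --- what you actually use is the elementary estimate inside its proof, namely $\|M_\varphi(\sum_{g\in F}\cdots)\|\le\|M_\varphi\|\,\|\sum_{g\in F}\cdots\|$ on finite sums --- but the substance is right.) Your approach buys a stronger conclusion (norm rather than weak differentiability at $0^+$, which is what the definition of $\widetilde{\mathcal{D}}$ literally asks for) and avoids the appeal to \cite[Corollary 3.1.8]{BR}, at the cost of the extra observation that $k(td)$ is positive definite; the paper's route is shorter given the cited semigroup machinery. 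The final step --- density of $\mathcal{C}$, invariance $M_t^d(\mathcal{C})\subseteq\mathcal{C}$, and the core criterion from \cite[Theorem 1.3.18]{App} or \cite[Corollary 3.1.7]{BR} --- is identical in both arguments.
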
 
\begin{proof} 
Let $x\in \mathcal{C}$ and set $y := - \sum_{g\in G} d(g)\, \widehat{x}(g) \Lambda_\sigma(g)\in CF(G, \sigma)$. To show that $\mathcal{C} \subseteq \widetilde{\mathcal{D}}$, it suffices to show that  
\[\lim_{t\rightarrow 0^+} \psi\Big(\frac{M_t^d(x)-x}{t}\Big) = \psi(y)\] for every $\psi\in C_r(G, \sigma)^*$. Using Lemma \ref{diff}, this amounts to show that
 \begin{equation} \label{deriv} 
 \lim_{t\to 0^+}  \,\sum_{g\in G}  \Big(\frac{1}{t}(e^{-td(g)} -1) + d(g)\Big) \,\widehat{x}(g) \, \psi\big(\Lambda_\sigma(g)\big)=0
  \end{equation}
  for every $\psi\in C_r(G, \sigma)^*$. So let $\psi\in C_r(G, \sigma)^*$ and consider any sequence $\{t_n\}$ in $(0, \infty)$ converging to $0$. Then, for each $n \in \Naturali$, define $f_n: G \to \Complessi$ by 
  \[ f_n(g) =  \Big(\frac{1}{t_n}(e^{-t_nd(g)} -1) + d(g)\Big)  \,\widehat{x}(g)\,\psi\big(\Lambda_\sigma(g)\big)\,.\]
  Since $\lim_{n\to \infty} \frac{1}{t_n}(e^{-t_nd(g)} -1)= -d(g)$, we have that 
  \[\lim_{n\to \infty} f_n(g) = 0.\]
Further, since $-1\leq \eta_{t_n}(g)\leq 0$ for every $g\in G$, we get that 
  \[ |f_n(g)| = \big|(\eta_{t_n}(g) +1) d(g) \,\widehat{x}(g)\,\psi\big(\Lambda_\sigma(g)\big)\big| \leq \, d(g) \,\big|\widehat{x}(g)\psi\big(\Lambda_\sigma(g)\big)\big|\]
  for all $n\in \Naturali$ and $g\in G$.   
  
Now, since $ \sum_{g\in G} d(g)\, \widehat{x}(g) \Lambda_\sigma(g)$ is convergent in $C_r^*(G, \sigma)$, $ \sum_{g\in G} d(g)\, \widehat{x}(g)\, \psi\big(\Lambda_\sigma(g)\big)$ is  convergent in $\Complessi$, and this is equivalent to 
   \[ \sum_{g\in G}  d(g)\,\big| \widehat{x}(g) \psi\big(\Lambda_\sigma(g)\big)\big|\, < \, \infty.\]
Thus, we may invoke the dominated convergence theorem and get that 
  \[  \lim_{n\to \infty} \, \sum_{g\in G} f_n(g) = \sum_{g\in G}\, \lim_{n\to \infty} f_n(g) = 0.\]
 It follows that (\ref{deriv}) holds.
\end{proof}

\subsection{} In this subsection, we let  $x_0 \in C_r^*(G, \sigma)$ and  $u:[0, \infty) \to C_r^*(G, \sigma)$ be the function given by  $u(t):=M_t^d(x_0)$ for each $t\geq 0$.

\begin{lemma} \label{CF-deriv} 
Assume $u'(t)$ exists and belongs to $CF(G, \sigma)$ for some $t>0$. Then we have $u(t) \in \mathcal{C}$ and $u'(t) = H^\mathcal{C}_d(u(t))$.
\end{lemma}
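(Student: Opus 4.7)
The plan is to compute the Fourier coefficients of $u'(t)$ directly from the assumption that this derivative exists, and then read off the two conclusions from the definition of $CF(G,\sigma)$ and $\mathcal{C}$.

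First, I would note that for every $g\in G$ and every $s\geq 0$ we have $\widehat{u(s)}(g) = e^{-sd(g)}\widehat{x_0}(g)$. Since the Fourier-coefficient functional $x\mapsto \widehat{x}(g) = \tau(x\,\Lambda_\sigma(g)^*)$ is bounded on $C_r^*(G,\sigma)$, it commutes with the operator-norm limit defining $u'(t)$. Consequently,
\[\widehat{u'(t)}(g) = \lim_{s\to 0}\frac{\widehat{u(t+s)}(g) - \widehat{u(t)}(g)}{s} = \frac{d}{dt}\bigl(e^{-td(g)}\widehat{x_0}(g)\bigr) = -d(g)\,e^{-td(g)}\widehat{x_0}(g) = -d(g)\,\widehat{u(t)}(g)\]
for each $g\in G$.

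Next, I would invoke the hypothesis $u'(t)\in CF(G,\sigma)$, which says by definition that the Fourier series of $u'(t)$ converges in operator norm. Combined with the computation above, this means that
\[\sum_{g\in G} -d(g)\,\widehat{u(t)}(g)\,\Lambda_\sigma(g) \quad\text{is operator-norm convergent, with sum } u'(t).\]
Multiplying by $-1$ (which does not affect unordered convergence), the series $\sum_{g\in G} d(g)\,\widehat{u(t)}(g)\,\Lambda_\sigma(g)$ is operator-norm convergent, which is exactly the statement that $u(t)\in\mathcal{C}$. Reading off from the definition of $H_d^\mathcal{C}$, we also get $H_d^\mathcal{C}(u(t)) = -\sum_{g\in G} d(g)\,\widehat{u(t)}(g)\,\Lambda_\sigma(g) = u'(t)$, completing the proof.

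There is no real obstacle here: the only subtle point is the interchange of the derivative with the Fourier-coefficient functional in the first step, which is legitimate because $\widehat{(\cdot)}(g)$ is a bounded linear functional on $C_r^*(G,\sigma)$ and $u'(t)$ exists in operator norm by assumption. Everything else is a direct rewriting using the uniqueness of Fourier coefficients and the definitions of $CF(G,\sigma)$, $\mathcal{C}$, and $H_d^\mathcal{C}$.
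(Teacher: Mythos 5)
Your proof is correct and follows essentially the same route as the paper: compute $\widehat{u'(t)}(g) = -d(g)\widehat{u(t)}(g)$ by exchanging the derivative with the (bounded) Fourier-coefficient functional, then use the hypothesis $u'(t)\in CF(G,\sigma)$ to identify $u'(t)$ with the operator-norm convergent series $-\sum_{g\in G} d(g)\widehat{u(t)}(g)\Lambda_\sigma(g)$, which yields both conclusions.
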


 \begin{proof} Let $g\in G$. Since $u'(t)$ exists in $C_r^*(G, \sigma)$, we have that 
 \begin{align*}\widehat{u'(t)}(g) &= \lim_{h\to 0} \Big[\frac{u(t+h)-u(t)}{h} \delta_e\Big](g) =  \lim_{h\to 0} \Big[\frac{\widehat{M_{t+h}^d(x_0)}-\widehat{M_t^d(x_0)}}{h}\Big](g)\\
&= \lim_{h\to 0}\, \frac{1}{h}\big(e^{-(t+h)d(g)}\,\widehat{x_0}(g) -e^{-td(g)}\,\widehat{x_0}(g)\big) = 
\lim_{h\to 0} \,\frac{1}{h}\big(e^{-hd(g)}-1\big) e^{-td(g)}\,\widehat{x_0}(g) \\
&= -d(g) e^{-td(g)} \,\widehat{x_0}(g).
\end{align*}
Hence, since $u'(t) \in CF(G, \sigma)$ (by assumption), we have that 
\[ u'(t) =  -\sum_{g\in G} d(g) e^{-td(g)} \, \widehat{x_0}(g) \Lambda_\sigma(g) = -\sum_{g\in G} d(g) \, \widehat{u(t)}(g) \Lambda_\sigma(g).\]
Thus, $u(t) \in \mathcal{C}$ and $u'(t) = H^\mathcal{C}_d(u(t))$.
\end{proof}

Using this lemma, we readily get:
\begin{proposition}\label{heatproblem}
The function $u$ is a solution of the heat problem associated to $H^\mathcal{C}_d$ and $x_0$ if and only if $u$ is differentiable on $(0, \infty)$ and $u'(t) \in CF(G, \sigma)$ for all $t>0$.
\end{proposition}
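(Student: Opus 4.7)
The plan is to verify the proposition directly from the definition of a solution, using Lemma \ref{CF-deriv} for the nontrivial direction. Two of the four conditions for a solution come essentially for free from the setup: $u(0) = M_0^d(x_0) = x_0$ holds by the semigroup axiom (b), and the continuity condition $\lim_{t\to 0^+} \|u(t) - x_0\| = 0$ holds because $\{M_t^d\}_{t\geq 0}$ is a $C_0$-semigroup. So the equivalence reduces to checking when the remaining two conditions (namely $u(t) \in \mathcal{C}$ for $t>0$, and differentiability on $(0,\infty)$ with $u'(t) = H^\mathcal{C}_d(u(t))$) hold.

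For the forward implication, I would simply unpack the definition. If $u$ solves the heat problem, then $u$ is differentiable on $(0,\infty)$ and, for each $t>0$,
\[ u'(t) = H^\mathcal{C}_d(u(t)) = -\sum_{g\in G} d(g)\,\widehat{u(t)}(g)\,\Lambda_\sigma(g), \]
where the series converges in operator norm by the very definition of $\mathcal{C}$. This convergence means precisely that $u'(t) \in CF(G,\sigma)$, since the Fourier coefficients of $u'(t)$ are $-d(g)\widehat{u(t)}(g)$ (as computed in the proof of Lemma \ref{CF-deriv}).

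For the reverse implication, assume $u$ is differentiable on $(0,\infty)$ with $u'(t) \in CF(G,\sigma)$ for every $t>0$. Then Lemma \ref{CF-deriv} applies verbatim and yields both $u(t) \in \mathcal{C}$ and $u'(t) = H^\mathcal{C}_d(u(t))$ for every $t>0$. Combined with the initial condition $u(0)=x_0$ and the continuity at $0$ recalled above, this gives all four defining conditions of a solution of the heat problem associated to $H^\mathcal{C}_d$ and $x_0$.

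I do not expect a genuine obstacle here: the nontrivial content (computing the Fourier coefficients of $u'(t)$ from the semigroup and transferring convergence statements between the series $\sum d(g)\widehat{u(t)}(g)\Lambda_\sigma(g)$ and membership in $CF(G,\sigma)$) is already encapsulated in Lemma \ref{CF-deriv}. The proposition is essentially a reformulation packaging that lemma together with the definition of ``solution'' and the $C_0$-semigroup properties of $\{M_t^d\}$ already established in the preceding subsection.
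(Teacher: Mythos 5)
Your proposal is correct and follows exactly the route the paper intends: the paper's entire proof is the remark ``Using this lemma, we readily get,'' referring to Lemma \ref{CF-deriv}, and your argument is precisely the fleshed-out version of that — the reverse direction is Lemma \ref{CF-deriv} applied at each $t>0$, the forward direction is the definition of $\mathcal{C}$ together with the term-by-term identification of the Fourier coefficients of $u'(t)$, and the remaining two conditions come from the $C_0$-semigroup properties of $\{M_t^d\}_{t\geq 0}$.
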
 

The following uniqueness result is reminiscent of its classical counterpart, going back to Fourier.  
 \begin{proposition}\label{unique}
Assume that $v:[0, \infty)\to \mathcal{C}$ is a solution of the heat problem associated to $H_d^{\mathcal{C}}$ and $x_0$. Then $v(t) = M_t^d(x_0)$ for every $t\geq 0$.
\end{proposition}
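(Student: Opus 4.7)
My plan is to use the classical ``differentiate $M_{T-s}^d(v(s))$ and show its derivative vanishes'' trick, which turns uniqueness into the assertion that a continuous $C_r^*(G,\sigma)$-valued function on $[0,T]$ whose derivative vanishes on $(0,T)$ must be constant. Fix $T>0$ and define $w : [0,T] \to C_r^*(G,\sigma)$ by $w(s) := M_{T-s}^d(v(s))$. Then $w(0) = M_T^d(x_0)$ and $w(T) = v(T)$, so the desired equality $v(T) = M_T^d(x_0)$ will follow as soon as $w$ is shown to be constant.

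First I would check that $w$ is continuous on $[0,T]$. For $s\in (0,T)$ this is routine since $v$ is continuous there (being differentiable) and the semigroup is strongly continuous. At $s=0$, I use the estimate
\[\|w(s) - M_T^d(x_0)\| \leq \|v(s) - x_0\| + \|M_{T-s}^d(x_0) - M_T^d(x_0)\|,\]
whose first term tends to $0$ by the boundary condition $\lim_{s\to 0^+}\|v(s)-x_0\|=0$ and whose second term tends to $0$ by strong continuity of $\{M_t^d\}_{t\geq 0}$. A symmetric argument handles $s = T^-$.

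Next I would show $w'(s) = 0$ for every $s \in (0,T)$. The standard product-rule computation splits the difference quotient as
\[\frac{w(s+h)-w(s)}{h} = M_{T-s-h}^d\!\left(\frac{v(s+h)-v(s)}{h}\right) + \frac{M_{T-s-h}^d(v(s)) - M_{T-s}^d(v(s))}{h}.\]
By Proposition \ref{Core}, $v(s) \in \mathcal{C} \subseteq \widetilde{\mathcal{D}}$ and $H_d^{\mathcal{C}}(v(s)) = H_d^{\widetilde{\mathcal{D}}}(v(s))$, so the second term converges to $-M_{T-s}^d(H_d^{\widetilde{\mathcal{D}}}(v(s)))$ (writing $M_{T-s-h}^d - M_{T-s}^d = -M_{T-s-h}^d(M_h^d - I)$ for $h>0$, and analogously for $h<0$, then using the generator property together with strong continuity). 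The first term converges to $M_{T-s}^d(v'(s))$ because $M_{T-s-h}^d$ is a contraction and converges strongly to $M_{T-s}^d$. Combining,
\[w'(s) = M_{T-s}^d\bigl(v'(s) - H_d^{\widetilde{\mathcal{D}}}(v(s))\bigr) = M_{T-s}^d\bigl(H_d^{\mathcal{C}}(v(s)) - H_d^{\mathcal{C}}(v(s))\bigr) = 0.\]

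Finally, a continuous Banach-space-valued function on $[0,T]$ whose derivative vanishes on $(0,T)$ is constant (by the mean value theorem applied to $s\mapsto \psi(w(s))$ for each $\psi\in C_r^*(G,\sigma)^*$ together with Hahn--Banach). Hence $w(0) = w(T)$, giving $M_T^d(x_0) = v(T)$; since $T>0$ was arbitrary and $v(0) = x_0 = M_0^d(x_0)$, the result follows. The only place requiring some care is the derivative computation at a fixed $s$: one must justify passing $M_{T-s-h}^d$ through a convergent difference quotient and simultaneously exploit the generator identity on $v(s) \in \widetilde{\mathcal{D}}$, but Proposition \ref{Core} provides exactly what is needed and keeps this essentially mechanical.
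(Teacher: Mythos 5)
Your proof is correct, but it takes a genuinely different route from the paper. You run the classical semigroup uniqueness argument: fix $T>0$, set $w(s)=M^d_{T-s}(v(s))$, check continuity of $w$ on $[0,T]$ (using the contraction property and the boundary condition at $s=0$), show $w'\equiv 0$ on $(0,T)$ via the product-rule decomposition and the generator identity $H_d^{\mathcal C}=H_d^{\widetilde{\mathcal D}}\big|_{\mathcal C}$ from Proposition \ref{Core}, and conclude $w(0)=w(T)$. The paper instead argues Fourier-coefficientwise: since $\widehat{x}(g)=\tau(x\Lambda_\sigma(g)^*)$ is a bounded linear functional, each $w_g(t):=\widehat{v(t)}(g)$ is differentiable with $(w_g)'(t)=\widehat{v'(t)}(g)=-d(g)\,w_g(t)$, so $w_g(t)=c_g e^{-td(g)}$, and continuity at $t=0$ identifies $c_g=\widehat{x_0}(g)$; injectivity of $x\mapsto\widehat{x}$ then finishes. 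The paper's route is more elementary and self-contained --- it needs only the scalar ODE and the trace functional, not Proposition \ref{Core} nor any $C_0$-semigroup machinery --- and it exploits the specific Fourier structure of $H_d^{\mathcal C}$. Your route is the standard abstract argument and would prove uniqueness for any operator that restricts the generator of a contraction semigroup, so it is more portable; its only real dependency is Proposition \ref{Core} (for $\mathcal C\subseteq\widetilde{\mathcal D}$ and the agreement of the two operators there), which the paper has already established, so the argument is fully available within the paper's toolkit. The one step deserving the care you flag --- passing $M^d_{T-s-h}$ through the convergent difference quotients for both signs of $h$ --- is handled correctly by the contraction bound plus strong continuity.
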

\begin{proof}
Let $t>0$. Since $v(t) \in \mathcal{C}$,  we have $v'(t) = -\sum_{g\in G} d(g) \widehat{v(t)}(g) \Lambda_\sigma(g)$, hence $\widehat{v'(t)}(g)=-d(g)  \widehat{v(t)}(g)$ for every $g\in G$. Now, for each $g\in G$, set $w_g(t):=\widehat{v(t)}(g)$. Then we have
\begin{align*}  \lim_{h\to 0} \frac{w_g(t+h) - w_g(t)}{h}
&=\lim_{h\to 0} \frac{\widehat{v(t+h)}(g) - \widehat{v(t)}(g)}{h}\\
&= \lim_{h\to 0} \,\tau\Big(\frac{1}{h}\,\big(v(t+h)- v(t)\big)\Lambda_\sigma(g)^*\Big)\\
&= \tau\big(v'(t)\Lambda_\sigma(g)^*\big)=\widehat{v'(t)}(g),
\end{align*}
which gives that \[(w_g)'(t) = \widehat{v'(t)}(g) = - d(g)  \widehat{v(t)}(g) = -d(g)\,w_g(t)\] for each $g\in G$. Thus we get that $\widehat{v(t)}(g) = w_g(t)= c_g e^{-td(g)}$ for all $t >0$ for some $c_g \in \Complessi$. As $\lim_{t\to 0^+} v(t) = x_0$, we readily get that $c_g=\lim_{t\to 0^+} \widehat{v(t)}(g)= \widehat{x_0}(g)$. Hence, for any $t>0$, we have
\[\widehat{v(t)}(g) = e^{-td(g)}\,\widehat{x_0}(g) = \widehat{u(t)}(g)\]
for all $g\in G$, which implies that $v(t) = u(t)=M_t^d(x_0)$.
\end{proof}

We now consider the case where $(G, \sigma)$ has the heat property w.r.t.~$d$. 
\begin{lemma} \label{heat-C} Assume that $(G, \sigma)$ has the heat property w.r.t.~$d$. Then $u(t) \in \mathcal{C}$ for all $t>0$. 
\end{lemma}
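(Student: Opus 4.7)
The plan is to reduce convergence of $\sum_{g\in G} d(g)\widehat{u(t)}(g)\Lambda_\sigma(g)$ to convergence of a known Fourier series multiplied by a bounded function, and then invoke the key equivalence $i) \Leftrightarrow v)$ of Proposition \ref{bdprop}.

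Fix $t>0$ and pick any $s\in (0,t)$, for instance $s=t/2$. Since $(G,\sigma)$ has the heat property w.r.t.~$d$, we have $e^{-sd}\in MCF(G,\sigma)$, so $u(s)=M_s^d(x_0)\in CF(G,\sigma)$. Unwinding, this says that the series
\[ \sum_{g\in G} e^{-sd(g)}\,\widehat{x_0}(g)\,\Lambda_\sigma(g) \]
is convergent in operator norm (this is the Fourier series of $u(s)$, cf.~Remark \ref{ell-inf} and the description of the Fourier coefficients of $M_\varphi(x)$).

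Now set $r:=t-s>0$ and define $\varphi:G\to\Reali$ by $\varphi(g):=d(g)\,e^{-rd(g)}$. The elementary inequality $a e^{-ra}\leq 1/(er)$ for $a\geq 0$ shows $\varphi\in\ell^\infty(G)$. Applying the implication $i)\Rightarrow v)$ of Proposition \ref{bdprop} to the convergent series above with this bounded weight $\varphi$, we conclude that
\[ \sum_{g\in G} \varphi(g)\,e^{-sd(g)}\,\widehat{x_0}(g)\,\Lambda_\sigma(g) = \sum_{g\in G} d(g)\,e^{-td(g)}\,\widehat{x_0}(g)\,\Lambda_\sigma(g) \]
is convergent in operator norm. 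Since $\widehat{u(t)}(g)=e^{-td(g)}\widehat{x_0}(g)$, the right-hand side is exactly $-H^\mathcal{C}_d(u(t))$ formally, so this convergence means precisely that $u(t)\in\mathcal{C}$.

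There is no real obstacle here beyond correctly identifying the bounded multiplier; the heat property supplies the regularization at time $s$ that converts an arbitrary initial datum into something in $CF(G,\sigma)$, and the remaining factor $d(g)e^{-(t-s)d(g)}$ is bounded precisely because $s<t$. The argument would break down if we tried to take $s=0$, which reflects the fact that $x_0$ itself need not lie in $\mathcal{C}$.
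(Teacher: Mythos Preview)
Your proof is correct and follows essentially the same approach as the paper's: split $t=s+(t-s)$ with $0<s<t$, use the heat property to get convergence of the Fourier series of $u(s)$, and then multiply by the bounded function $g\mapsto d(g)e^{-(t-s)d(g)}$ via Proposition~\ref{bdprop}. The paper's argument is identical in structure, only omitting your explicit bound $1/(er)$ and the closing remark about why $s=0$ would fail.
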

\begin{proof}
Let $t>0$ and pick $s$ such that $0< s < t$. Then we have
 \begin{align*} \sum_{g\in G}d(g)\widehat{u(t)}(g) \Lambda_\sigma(g) &=  \sum_{g\in G} d(g) e^{-td(g)} \, \widehat{x_0}(g) \Lambda_\sigma(g)\\
 &= \sum_{g\in G} d(g)e^{-(t-s)d(g)} \, e^{-sd(g)}\widehat{x_0}(g) \Lambda_\sigma(g)
 \end{align*}
 As $r\mapsto re^{-ar} $ is a bounded function on $[0, \infty)$ whenever $a >0$,  we get that the function $g \mapsto d(g)e^{-(t-s)d(g)}$ is bounded on $G$. Further, since $e^{-sd} \in MCF(G,\sigma)$ (by  assumption), the series  $\sum_{g\in G} e^{-sd(g)}\widehat{x_0}(g) \Lambda_\sigma(g)$ is convergent. It therefore follows from the expression above  that the series $\sum_{g\in G}d(g)\widehat{u(t)}(g) \Lambda_\sigma(g)$ is convergent, i.e., $u(t) \in \mathcal{C}$.\footnote{A similar argument gives that if $e^{-t_0d} \in MCF(G,\sigma)$ for some $t_0 > 0$, then $u(t) \in \mathcal{C}$ for every $t > t_0$.}
\end{proof}
  
This means that  $H^\mathcal{C}_d(u(t))$ is defined for every $t>0$ whenever $(G, \sigma)$ has the heat property w.r.t.~$d$. 
\begin{theorem} \label{heat-solve} Assume  $(G, \sigma)$ has the heat property w.r.t.~$d$, and let $x_0$ be any element of $C_r^*(G, \sigma)$. Then the function $u$ given by $u(t) = M_t^d(x_0)$ for each $t\geq0$, which has a convergent Fourier series for every $t>0$, is the unique solution of the heat problem associated to $H_d^{\mathcal{C}}$ and $x_0$.  
\end{theorem}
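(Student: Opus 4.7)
The plan is to assemble the various pieces prepared earlier in the section: the $C_0$-semigroup theory for $\{M_t^d\}_{t\geq 0}$, the core identification (Proposition~\ref{Core}), Lemma~\ref{heat-C} (which uses the heat property), and the uniqueness statement (Proposition~\ref{unique}). Existence is the only part that needs real argument, while uniqueness will be immediate.

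For existence I would verify the four requirements in the definition of a solution. The initial condition $u(0)=x_0$ and the operator-norm continuity $\lim_{t\to 0^+}\|u(t)-x_0\|=0$ are tautological, the second being the $C_0$-property of the semigroup. That $u(t)\in\mathcal{C}$ for every $t>0$ is precisely the content of Lemma~\ref{heat-C}, whose proof exploits the fact that $e^{-sd}\in MCF(G,\sigma)$ for every $s\in(0,t)$, i.e.\ the heat property. The convergence in operator norm of the Fourier series of $u(t)$ that is also claimed in the theorem is immediate, since by the heat property $e^{-td}\in MCF(G,\sigma)$ for all $t>0$, so $u(t)=M_t^d(x_0)\in CF(G,\sigma)$.

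The heart of the matter is to show that $u$ is differentiable on $(0,\infty)$ with $u'(t)=H_d^{\mathcal{C}}(u(t))$. My plan is to fix an arbitrary $t>0$, choose $t_0\in(0,t)$, and note that $u(t_0)\in\mathcal{C}\subseteq\widetilde{\mathcal{D}}$ by Lemma~\ref{heat-C} combined with Proposition~\ref{Core}. The standard $C_0$-semigroup theory then guarantees that $s\mapsto M_s^d(u(t_0))=u(t_0+s)$ is continuously differentiable on $[0,\infty)$, with derivative $H_d^{\widetilde{\mathcal{D}}}(u(t_0+s))$. Evaluating at the interior point $s=t-t_0>0$ yields a genuine two-sided derivative $u'(t)=H_d^{\widetilde{\mathcal{D}}}(u(t))$. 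Since $u(t)\in\mathcal{C}$ and Proposition~\ref{Core} identifies $H_d^{\widetilde{\mathcal{D}}}$ restricted to $\mathcal{C}$ with $H_d^{\mathcal{C}}$, we conclude $u'(t)=H_d^{\mathcal{C}}(u(t))$, as required.

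Uniqueness is then a direct invocation of Proposition~\ref{unique}: any other solution $v$ of the heat problem has $v(t)\in\mathcal{C}$ for all $t>0$ by the very definition, hence $v(t)=M_t^d(x_0)=u(t)$ for every $t\geq 0$. I do not anticipate a serious obstacle, since the preparatory work done in Lemma~\ref{heat-C}, Proposition~\ref{Core}, and Proposition~\ref{unique} carries most of the load. The only subtle point is that one must apply the $C_0$-semigroup differentiability result at a base point strictly inside $(0,\infty)$ rather than at $0$, so as to produce a two-sided derivative at each $t>0$ rather than merely a right-derivative.
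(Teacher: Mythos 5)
Your proposal is correct and follows essentially the same route as the paper: Lemma \ref{heat-C} for $u(t)\in\mathcal{C}$, Proposition \ref{Core} to place $u(t)$ in $\widetilde{\mathcal{D}}$ and identify $H_d^{\widetilde{\mathcal{D}}}$ with $H_d^{\mathcal{C}}$ there, the $C_0$-semigroup machinery for differentiability, and Proposition \ref{unique} for uniqueness. Your extra care in basing the semigroup differentiability at a point $t_0<t$ to obtain a genuine two-sided derivative is a sound refinement (the paper's displayed computation only exhibits the right-derivative $h\to 0^+$), but it does not change the substance of the argument.
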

\begin{proof}
Let  $t>0$. Lemma \ref{heat-C} gives that $u(t) \in \mathcal{C}$, hence that  $u(t) \in \widetilde{\mathcal{D}}$ by  Proposition \ref{Core}. Then, by definition of $\widetilde{\mathcal{D}}$ and $H_d^{\widetilde{\mathcal{D}}}$, we get that 
\[u'(t) = \lim_{h\to 0^+} \frac{1}{h}\big(u(t+h)-u(t)\big) = \lim_{h\to 0^+} \, \frac{1}{h}\big(M_h^d(u(t))-u(t)\big) \, \text{ exists in } C_r^*(G,\sigma)\]
and is equal to $H_d^{\widetilde{\mathcal{D}}}(u(t)) $ $= H_d^{\mathcal{C}}(u(t))$. Since $u(0)=x_0$ and  $\|u(t)-x_0\| \to 0$ as $t\to 0^+$, this shows that $u$  is a solution of the heat problem associated to $H_d^{\mathcal{C}}$ and $x_0$. The uniqueness part follows from Proposition \ref{unique}.
\end{proof}

\begin{remark}
Assume that  $(G, \sigma)$ does not have the heat property w.r.t.~$d$, and let $x_0\in CF(G, \sigma)$. Then $u(t)= M_t^d(x_0) \in CF(G, \sigma)$ for every $t>0$ by Proposition \ref{CFG2}, so the proof of Lemma \ref{heat-C} gives that $u(t) \in \mathcal{C}$ for every $t>0$. Moreover, the proof of Theorem \ref{heat-solve} also goes through, and we can conclude that the function $u$ is the unique solution of the heat problem associated to 
$H_d^{\mathcal{C}}$ and $x_0$. However, this may not be true if $x_0 \not\in CF(G, \sigma)$ (cf.~Proposition \ref{T-heat}).
\end{remark} 

Theorem \ref{heat-solve} may be applied to all the examples we have exhibited in the previous section. We illustrate this with a couple of cases. 
 \begin{example} 
Let $d\in ND_0^+(\Relativi^n)$ be  given by $d(m):= m_1^2 + \cdots + m_n^2$, $\Theta\in M_n([0, 2\pi))$ be a skew-symmetric matrix and consider the heat problem on $A_\Theta = C_r^*(\Relativi^n, \sigma_\Theta)$ associated to $H_d^\mathcal{C}$ and $x_0\in A_\Theta$. Then the domain $\mathcal{C}$ clearly contains
\[\Big\{  x \in A_\Theta: \sum_{m\in \Relativi^n} (m_1^2 + \cdots + m_n^2)\, \big|\widehat{x}(m_1,  \ldots, m_n)\big| < \infty\Big\},\] hence also the smooth algebra $A_\Theta^\infty:=\{ x \in A_\Theta: \widehat{x} \text{ is rapidly decreasing on } \Relativi^n\}$,
 and $H_d^\mathcal{C}$ is nothing but the Laplace operator $\Delta =\delta_1^2 +\cdots + \delta_n^2$ when restricted to  $A_\Theta^\infty$. (See \cite{Ros} for a study of the associated Laplace equation and its variants when $n=2$.) Since $\Relativi^n$ has the heat property w.r.t.~$d$, we get that the unique solution of the heat problem is given by $u(0) = x_0$ and 
\[ u(t) =  \sum_{m\in \Relativi^n} e^{-t(m_1^2 + \cdots + m_n^2)}\, \widehat{x_0}(m) \Lambda_{\sigma_\Theta}(m)\,, \quad  t>0,\]
which actually belongs to $A_\Theta^\infty$ for every $t>0$. Here, instead of $d=|\cdot|^2_2$, one could also consider $d=|\cdot|_1$ or  $d=|\cdot|_2$. 
\end{example}

\begin{example} Let $G=\mathbb{F}_k$ ($k\geq 2$) and $L=|\cdot|$ be the canonical word-length on $\mathbb{F}_k$, as in Example \ref{free-gp2}. Then $H:=H_L^\mathcal{C}$ is given by 
\[ H(x) =- \sum_{g\in \mathbb{F}_k} |g| \, \widehat{x}(g) \lambda(g)\]
whenever  $\sum_{g\in \mathbb{F}_k} |g| \, \widehat{x}(g) \lambda(g)$ is convergent. Since $\mathbb{F}_k$ has the heat property w.r.t.~$|\cdot|$, we get that the heat problem  associated to $H$ and $x_0\in C_r^*(\mathbb{F}_k)$  has a unique solution $u$, where $u(t)$ is given for each $t>0$ by the convergent Fourier series in (\ref{free-sol}).
\end{example}

Under some mild assumption on $d$, we can show that if the heat problem associated to $H^\mathcal{C}_d$ has a solution for every choice of initial value in $C_r^*(G, \sigma)$, then $(G, \sigma)$ must have the heat property w.r.t.~$d$. We will use the following lemma, where  \[G^d_0:=\{ g \in G \mid d(g)=0\}.\]
\begin{lemma}\label{d-conv}
Assume $G_0^d$ is finite and $\inf\big\{ d(g): g\in G\setminus G^d_0\big\} >0$. Then $\mathcal{C} \subseteq CF(G, \sigma)$.
\end{lemma}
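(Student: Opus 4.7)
The plan is to exhibit every $x\in\mathcal{C}$ as the sum of a finite Fourier polynomial (supported on $G_0^d$) plus the image of a ``nice'' element under a bounded multiplier, where the multiplier is simply reciprocal-of-$d$ truncated to zero on $G_0^d$. The two hypotheses on $d$ are exactly what makes this reciprocal well-defined and bounded.

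More precisely, set $\delta_0 := \inf\{d(g) : g \in G\setminus G_0^d\} > 0$ and define $\varphi : G \to \Complessi$ by $\varphi(g) = 1/d(g)$ for $g \in G \setminus G_0^d$ and $\varphi(g) = 0$ for $g \in G_0^d$. Then $\varphi \in \ell^\infty(G)$ with $\|\varphi\|_\infty \leq 1/\delta_0$. Given $x \in \mathcal{C}$, the defining property of $\mathcal{C}$ says that the series $\sum_{g\in G} d(g)\widehat{x}(g)\Lambda_\sigma(g)$ converges in operator norm; call its sum $y \in C_r^*(G,\sigma)$. By construction $y \in CF(G,\sigma)$ and $\widehat{y}(g) = d(g)\widehat{x}(g)$ for every $g\in G$.

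Now observe the elementary identity
\[
\widehat{x}(g) \;=\; \varphi(g)\,\widehat{y}(g) \;+\; \mathbb{1}_{G_0^d}(g)\,\widehat{x}(g) \qquad (g \in G).
\]
Since $y\in CF(G,\sigma)$ and $\varphi\in \ell^\infty(G)$, Proposition \ref{bdprop} (the equivalence $i)\!\Leftrightarrow\!v)$ applied to the convergent series $\sum_g \widehat{y}(g)\Lambda_\sigma(g)$) gives that $\sum_{g\in G}\varphi(g)\widehat{y}(g)\Lambda_\sigma(g)$ converges in operator norm. The remaining piece $\sum_{g\in G_0^d}\widehat{x}(g)\Lambda_\sigma(g)$ is a finite sum because $G_0^d$ is finite by hypothesis, so it converges trivially. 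Summing the two convergent pieces gives that the Fourier series $\sum_{g\in G}\widehat{x}(g)\Lambda_\sigma(g)$ converges in operator norm, i.e.\ $x \in CF(G,\sigma)$.

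There is no real obstacle here; the proof is essentially a one-line manipulation once one has the right bounded function $\varphi$. The two assumptions on $d$ play distinct roles: properness of $d$ on its zero set (finiteness of $G_0^d$) handles the ``zero modes'' of the multiplier, while a uniform positive lower bound away from $G_0^d$ is precisely what makes $\varphi$ bounded, allowing Proposition \ref{bdprop} to transfer unordered convergence of the Fourier series of $y$ to that of $x$.
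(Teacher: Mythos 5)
Your proof is correct and follows essentially the same route as the paper: both arguments multiply the convergent series $\sum_{g} d(g)\widehat{x}(g)\Lambda_\sigma(g)$ by the bounded function $1/d$ (set to $0$ on $G_0^d$) via Proposition \ref{bdprop}, and absorb the finitely many terms over $G_0^d$ separately. Your version just spells out the decomposition a bit more explicitly.
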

\begin{proof}
Let $x \in \mathcal{C} $. Since the function $\frac{1}{d}$ is bounded on $G\setminus G^d_0$, we get that 
\[ \sum_{g\in G\setminus G^d_0}  \widehat{x}(g) \Lambda_\sigma(g) = \sum_{g\in G\setminus G^d_0} \frac{1}{d(g)} \,d(g) \widehat{x}(g) \Lambda_\sigma(g)\]
is convergent by Proposition \ref{bdprop}. Thus $\sum_{g\in G}  \widehat{x}(g) \Lambda_\sigma(g)$ is convergent too.
\end{proof} 

\begin{proposition} 
Assume  $G^d_0$ is finite and $\,\inf\big\{ d(g): g\in G\setminus G^d_0\big\} >0$. Suppose that the heat problem associated to $H^\mathcal{C}_d$ and $x_0$ has a solution for every $x_0$ in $C_r^*(G, \sigma)$.  Then $(G, \sigma)$ has the heat property w.r.t.~$d$.
\end{proposition}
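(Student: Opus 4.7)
The plan is to show directly that condition (H1) holds, namely that $M_t^d(x_0) \in CF(G,\sigma)$ for every $x_0 \in C_r^*(G,\sigma)$ and every $t > 0$. Fix an arbitrary $x_0 \in C_r^*(G,\sigma)$. By hypothesis there exists some solution $v : [0,\infty) \to C_r^*(G,\sigma)$ of the heat problem associated to $H_d^{\mathcal{C}}$ and $x_0$; in particular, $v(t) \in \mathcal{C}$ for every $t > 0$.

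Now I would invoke the uniqueness result, Proposition \ref{unique}, which identifies this solution with the heat semigroup: one has $v(t) = M_t^d(x_0)$ for every $t \geq 0$. Combined with the membership $v(t) \in \mathcal{C}$, this gives $M_t^d(x_0) \in \mathcal{C}$ for every $t > 0$.

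To upgrade this from $\mathcal{C}$ to $CF(G,\sigma)$, I would use the two standing assumptions on $d$. Since $G_0^d$ is finite and $\inf\{d(g) : g \in G \setminus G_0^d\} > 0$, Lemma \ref{d-conv} applies and yields $\mathcal{C} \subseteq CF(G,\sigma)$. Consequently, $M_t^d(x_0) \in CF(G,\sigma)$ for every $t > 0$. Since $x_0 \in C_r^*(G,\sigma)$ was arbitrary, condition (H1) holds, and hence $(G,\sigma)$ has the heat property w.r.t.~$d$.

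There is essentially no obstacle: the proof is a chain of already-established implications (existence of a solution $\Rightarrow$ it coincides with the semigroup orbit by uniqueness $\Rightarrow$ the orbit lies in $\mathcal{C}$ $\Rightarrow$ by the assumptions on $d$ the orbit lies in $CF(G,\sigma)$). The only subtle point is noticing that the hypotheses on $d$ are exactly what is needed to invoke Lemma \ref{d-conv}, which is the mechanism by which the weaker conclusion ``$u(t) \in \mathcal{C}$'' can be strengthened to the stronger conclusion ``$u(t) \in CF(G,\sigma)$'' that defines the heat property.
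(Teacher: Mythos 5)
Your proof is correct and follows essentially the same route as the paper's (much terser) argument: existence of a solution plus the uniqueness statement of Proposition \ref{unique} identifies $M_t^d(x_0)$ with $v(t)\in\mathcal{C}$, and Lemma \ref{d-conv} upgrades $\mathcal{C}$ to $CF(G,\sigma)$. You merely make explicit the appeal to uniqueness that the paper leaves implicit.
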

\begin{proof}  Let $t>0$. Then for any  $x_0 \in C_r^*(G, \sigma)$, we have $M_t^d(x_0) \in \mathcal{C}$, so $M_t^d(x_0)$ belongs to $CF(G, \sigma)$ by Lemma \ref{d-conv}. 
\end{proof}

On the other hand, the heat problem does not always have a solution:
\begin{proposition} \label{T-heat}
 Assume that $G_0^d$ is finite and $\inf\big\{ d(g): g\in G\setminus G^d_0\big\} >0$. If $G$ has property $(T)$ and $x_0 \in C_r^*(G, \sigma)\setminus CF(G, \sigma)$, then the heat problem associated to $H^\mathcal{C}_d$ and $x_0$ has no solution.
 \end{proposition}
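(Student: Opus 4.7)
The plan is to argue by contradiction, chaining together three results already established in the paper: the uniqueness result (Proposition \ref{unique}), the inclusion $\mathcal{C} \subseteq CF(G,\sigma)$ under the present hypotheses on $d$ (Lemma \ref{d-conv}), and the property-(T) obstruction (Corollary \ref{propT2}).

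Suppose toward a contradiction that there exists a solution $v:[0,\infty)\to C_r^*(G,\sigma)$ of the heat problem associated to $H_d^{\mathcal{C}}$ and $x_0$. By the very definition of a solution given in Subsection 4.1, one has $v(t)\in\mathcal{C}$ for every $t>0$. Thus Proposition \ref{unique} applies and yields that $v(t)=M_t^d(x_0)$ for every $t\geq 0$. In particular, $M_t^d(x_0)\in\mathcal{C}$ for all $t>0$.

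Next, since $G_0^d$ is finite and $\inf\{d(g):g\in G\setminus G_0^d\}>0$ by hypothesis, Lemma \ref{d-conv} gives $\mathcal{C}\subseteq CF(G,\sigma)$. Combining this with the previous step, we obtain $M_t^d(x_0)\in CF(G,\sigma)$ for every $t>0$. Consequently, the set $I^{d,\sigma}_{x_0}$ contains $(0,\infty)$ and is in particular non-empty.

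This, however, directly contradicts Corollary \ref{propT2}: since $G$ has property (T) and $x_0\notin CF(G,\sigma)$, that corollary forces $I^{d,\sigma}_{x_0}=\emptyset$. Thus no solution can exist. There is no substantial obstacle here, as all the heavy lifting (the Akemann-Walter theorem feeding into Proposition \ref{propT}, the uniqueness argument via the scalar ODEs $(w_g)'=-d(g)w_g$, and the domain description in Lemma \ref{d-conv}) has already been carried out earlier in the excerpt; the present statement is essentially the contrapositive of what one gets by assembling these pieces.
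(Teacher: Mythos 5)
Your proof is correct and follows essentially the same route as the paper: assume a solution exists, invoke Proposition \ref{unique} to identify it with $t\mapsto M_t^d(x_0)$, use Lemma \ref{d-conv} to place $M_t^d(x_0)$ in $CF(G,\sigma)$, and derive a contradiction from property (T). The only cosmetic difference is that you cite Corollary \ref{propT2} for the final contradiction, while the paper unwinds it slightly by noting that property (T) forces $d$ to be bounded and then applying Proposition \ref{propT} directly; these are the same argument.
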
 

\begin{proof} Assume  the heat problem associated to $H^\mathcal{C}_d$ and $x_0 \in C_r^*(G, \sigma)$ has a solution $v$. Proposition \ref{unique} gives that $v(t) = M_t^d(x_0)$ for all $t\geq 0$. Moreover, using Lemma \ref{d-conv}, we get $M_t^d(x_0) \in \mathcal{C} \subseteq CF(G, \sigma)$ for every $t>0$.  
Now, if $G$ has property $(T)$, then $d$ is bounded, and Proposition \ref{propT} then implies that $x_0 \in CF(G,\sigma)$.
 \end{proof}

\section{Further questions and comments}
{\bf 5.1}\,
Let $d$ in $ND_0^+(G)$. Proposition \ref{propT} shows that $\epsilon(d)= \infty$ if $G$ has property (T). On the other hand, we have that  $\epsilon(d) = 0$ if and only if $G$ has the heat property w.r.t.~$d$, and this happens for example when $d$ is proper and $G$ has subexponential H-growth w.r.t.~$d$, cf.~Theorem \ref{subexp}. It is not difficult to find examples where $0 < \epsilon(d) < \infty$. For instance, if $G=\Relativi$ and $d(n) = \log(1+|n|)$, then $\sum_{n \in \Relativi} e^{-t d(n)} = \sum_n \frac 1 {(|n|+1)^t}$ is convergent if and only if $t > 1$, so  $\delta(d) =1$. Hence, using Remark \ref{poincare-exp}, we get that $\epsilon(d)=\delta(d)/2 = 1/2$. However, $\Relativi$ has the heat property. It would be interesting to know if there are examples of groups without the heat property such that for some $d \in ND_0^+(G)$ we have  $0 < \epsilon(d) <  \infty$, while $\delta(d) = \infty$.
   
Suppose now $G$ does not have property (T). It may well happen that  $\epsilon(d)=\infty$ for some unbounded $d$ (consider for example $G=\Relativi$ and $d(n) =  \log(1+\log(1+|n|))$). One may then ask: when does it exist some $d\in ND_0^+(G)$ such that $\epsilon(d) < \infty$ ? Clearly, any such group will have the weak heat property. In the case where $G$ is abelian, this amounts to asking when there exists some $d\in ND_0^+(G)$ such that $\delta(d) < \infty$.

\medskip \noindent {\bf 5.2}\, All the groups with the heat property we have met in this article also have the Haagerup property. Does this hold in general ? 

The converse seems also open: does there exist a group with the Haagerup property not having the heat property ? For instance, what about the Thompson's groups $F, T$ and $V$ (which are known to have the Haagerup property \cite{Far}, see also \cite{BJ} for $F$ and $T$) ? Actually, we don't know if every amenable, or even abelian, group has the heat property (or at least the weak heat property).

\medskip \noindent {\bf 5.3}\,   In view of our previous works \cite{BeCo3, BeCo5} on Fourier series and discrete $C^*$-crossed products, it is tempting to guess that some of our results in the present paper might be extended to this more general setting, or even to Fell bundle over discrete groups \cite{Exel}.  

\medskip \noindent {\bf 5.4}\, In this paper, we have considered a function $d:G \to [0, \infty)$ such that $d(e)=0$ and assumed that  $d$  negative definite. One may relax this assumption and require instead that $d$ satisfies the following conditions: 
\begin{itemize}
\item[(i)] $e^{-td}$ is a multiplier of $C_r^*(G, \sigma)$ for every $t>0$;
\item[(ii)] $\sup_{t>0} \|M_t^d\| < \infty$ (where $M_t^d= M_{e^{-td}}$).
\end{itemize}

Then one may for instance say that $(G, \sigma)$ has the heat property w.r.t.~$d$ whenever $e^{-td}$ belongs to $ MCF(G, \sigma)$ for every $t>0$. (This amounts to requiring that $\{e^{-td}\}_{t>0}$ is a bounded Fourier summing net in the sense of \cite{BeCo1}.)    
  
 Let then $G$ be a Gromov hyperbolic group and $d=|\cdot|$ be the word length function on $G$ w.r.t.~some finite set of generators. Then it follows from \cite[Theorem 1]{Oz} (and \cite[Proposition 4.3]{BeCo1} when $\sigma\neq 1$) that conditions (i) and (ii) are satisfied, although $|\cdot|$ is not necessarily negative definite. (In fact, Ozawa shows that each $M_t^d$ is completely bounded and $\sup_{t>0} \|M_t^d\|_{\rm cb} < \infty$. As shown by Mei and de la Salle in \cite{MdlS}, Ozawa's result can be generalized to the case where $d=|\cdot|^r$ for any $r>0$.)
 Moreover, \cite[Theorem 5.12]{BeCo1} gives that $G$ has the heat property w.r.t.~$|\cdot|$.  

 As  there exist Gromov hyperbolic groups having property (T), we see that this leads to a theory where property (T) is no longer an obstruction to heat properties.  On the other hand, one readily checks that  the  proof of Theorem \ref{heat-solve}   goes through whenever $G$  has the heat property w.r.t.~$d$ in this broader sense, hence that the heat problem associated to $H_d^\mathcal{C}$ and $x_0 \in C_r^*(G, \sigma)$ always has a (unique) solution in this case.

\medskip \noindent {\bf 5.5}\, When considering the heat equation on the circle, it is well-known that the assumption on the initial datum $f_0$ can be relaxed, e.g., one can require $f_0$ to be only piecewise continuous (cf.~\cite[Section 10.2]{TW}).  A general set-up including this case is as follows. 

Assume that $d\in ND_0^+(G)$, or more generally,  that $d$ satisfies the same assumptions as in 5.4. Let the domain $\mathcal{C} \subseteq C_r^*(G, \sigma)$ and $H_d^\mathcal{C}: \mathcal{C} \to CF(G, \sigma)$ be as in subsection \ref{CH}. 

A function $u:[0, \infty) \to {\rm vN}(G, \sigma)$ is then said to be a solution of the \emph{heat problem associated to $H^\mathcal{C}_d$ and $x_0\in {\rm vN}(G, \sigma)$} whenever $u$ satisfies the following conditions: 
\begin{itemize}
\item $u(0) = x_0$, 
\item $u(t) \in \mathcal{C}$ for every $t >0$,
\item $u$ is differentiable on $(0, \infty)$ and  $u'(t) = H^\mathcal{C}_d(u(t))$ \  for every $t>0$,
\item $\lim_{t\to 0^+} \|u(t) - x_0\|_\tau = 0$ \quad (where $\|x\|_\tau = \tau(x^*x)^{1/2}$).
\end{itemize}
For each $t\geq 0$, let $\widetilde{M}_t^d: {\rm vN(G, \sigma)}\to {\rm vN(G, \sigma)}$ be given by $\widetilde{M}_t^d := \widetilde{M}_{e^{-td}}$. Let us  say that  $(G, \sigma)$ has the \emph{strong} heat property w.r.t.~$d$ whenever $\widetilde{M}_t^d$ maps ${\rm vN}(G, \sigma)$ into $CF(G, \sigma)$ for every $t>0$. 
 Then it is not difficult to see that our approach for proving Theorem \ref{heat-solve} may be adapted to establish the following result:

\begin{theorem}
 \label{heat-solve2} Assume  that $(G, \sigma)$ has the strong heat property w.r.t.~$d$, and
 let $x_0$ be any element of ${\rm vN}(G, \sigma)$. Then the function $u$ given by $u(t) = \widetilde{M}_t^d(x_0)$ for every  $t\geq0$ is the unique solution of the heat problem associated to $H_d^{\mathcal{C}}$ and $x_0$.  
 \end{theorem}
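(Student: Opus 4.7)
The plan is to follow the strategy of Theorem \ref{heat-solve}, carefully adapting each step to account for the fact that $x_0$ now lies in ${\rm vN}(G,\sigma)$ rather than in $C_r^*(G,\sigma)$, and that continuity at $t=0$ is required only with respect to $\|\cdot\|_\tau$.

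First I would verify that $u(t) \in \mathcal{C}$ for every $t>0$. The strong heat property ensures $u(t) = \widetilde{M}_t^d(x_0) \in CF(G,\sigma) \subseteq C_r^*(G,\sigma)$ for each $t>0$. Then, for fixed $t>0$ and any $s$ with $0 < s < t$, I would write
\[\sum_{g\in G} d(g)\,\widehat{u(t)}(g)\,\Lambda_\sigma(g) = \sum_{g\in G} \bigl(d(g)\,e^{-(t-s)d(g)}\bigr)\,\bigl(e^{-sd(g)}\,\widehat{x_0}(g)\bigr)\,\Lambda_\sigma(g),\]
observe that $g \mapsto d(g)\,e^{-(t-s)d(g)}$ is bounded on $G$, and use the fact that $\sum_g e^{-sd(g)}\widehat{x_0}(g)\Lambda_\sigma(g)$ is the convergent Fourier series of $\widetilde{M}_s^d(x_0) \in CF(G,\sigma)$. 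Proposition \ref{bdprop} then gives convergence of the full series, so $u(t) \in \mathcal{C}$. Next, continuity at $0$ in $\|\cdot\|_\tau$ follows from $\widehat{x_0} \in \ell^2(G)$ together with dominated convergence on $\ell^2(G)$ applied to $(e^{-td}-1)\widehat{x_0}$.

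For differentiability in operator norm on $(0,\infty)$ with $u'(t) = H_d^{\mathcal{C}}(u(t))$, I would apply standard $C_0$-semigroup theory to $\{M_t^d\}_{t\geq 0}$ on $C_r^*(G,\sigma)$. Fix $t_0 > 0$. By the first step and Proposition \ref{Core} we have $u(t_0) \in \mathcal{C} \subseteq \widetilde{\mathcal{D}}$; moreover, since $u(t_0) \in C_r^*(G,\sigma)$ and $\widetilde{M}_h^d$ extends $M_h^d$, the semigroup identity $u(t_0 + h) = M_h^d(u(t_0))$ holds for all $h \geq 0$. Standard theory (cf.~\cite[Corollary 3.1.7]{BR}) then gives that $t \mapsto u(t)$ is differentiable on $[t_0, \infty)$ with derivative $H_d^{\widetilde{\mathcal{D}}}(u(t)) = H_d^{\mathcal{C}}(u(t))$, the equality by Proposition \ref{Core}. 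Since $t_0 > 0$ was arbitrary, $u$ is differentiable on $(0,\infty)$ with $u'(t) = H_d^{\mathcal{C}}(u(t))$.

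Finally, uniqueness proceeds as in Proposition \ref{unique}. If $v : [0,\infty) \to {\rm vN}(G,\sigma)$ is another solution, then $v(t) \in \mathcal{C} \subseteq C_r^*(G,\sigma)$ for $t>0$, and for each $g \in G$ the scalar function $w_g(t) := \widehat{v(t)}(g) = \tau(v(t)\,\Lambda_\sigma(g)^*)$ satisfies $w_g'(t) = -d(g)\,w_g(t)$ on $(0,\infty)$, whence $w_g(t) = c_g\,e^{-td(g)}$. The key adaptation is at $t=0$: the weaker continuity $\|v(t)-x_0\|_\tau \to 0$ still yields $|w_g(t) - \widehat{x_0}(g)| \leq \|v(t)-x_0\|_\tau \to 0$, so $c_g = \widehat{x_0}(g)$ and hence $\widehat{v(t)} = \widehat{u(t)}$ for all $t \geq 0$; since Fourier coefficients are injective on ${\rm vN}(G,\sigma)$, we conclude $v = u$. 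The main obstacle is really this interplay between the two topologies: the solution lives in $CF(G,\sigma)$ for $t>0$ so that operator-norm differentiability makes sense, yet the initial datum must be approached only in the weaker $\|\cdot\|_\tau$-topology, and it is the square-summability of Fourier coefficients that bridges the gap and rigidly pins down the initial condition.
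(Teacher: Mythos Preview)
Your proposal is correct and follows precisely the route the paper indicates: it states that ``our approach for proving Theorem \ref{heat-solve} may be adapted'' without giving further details, and your adaptation---using the strong heat property in place of Lemma \ref{heat-C}, invoking dominated convergence on $\ell^2(G)$ for $\|\cdot\|_\tau$-continuity at $0$, and noting that $|\widehat{y}(g)|\leq\|y\|_\tau$ in the uniqueness argument---supplies exactly the modifications needed. One minor point: the reference \cite[Corollary 3.1.7]{BR} concerns cores rather than differentiability of orbits; the relevant fact (that $t\mapsto T_t x$ is norm-differentiable when $x$ lies in the generator's domain) is standard $C_0$-semigroup theory but lives elsewhere in that source.
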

 Now, one can  check (using results from \cite{BeCo1}) that $(G, \sigma)$ has the strong heat property w.r.t.~$d$ whenever $\delta(d)=0$ or $G$ has subexponential H-growth w.r.t.~$d$. Thus, all examples in subsections \ref{heat-property} and 5.4 of pairs $(G, \sigma)$ having the heat property w.r.t.~$d$ also satisfy the strenghtened version, so Theorem \ref{heat-solve2} applies to all these cases. 
 
 We also note, as pointed out to us by the referee, that one may here also consider $\{\widetilde{M}_t^d\}_{t\geq 0}$ as a CP-semigroup (in the sense of Arveson) on the von Neumann algebra ${\rm vN}(G, \sigma)$, whose generator $L$ may be thought as a noncommutative Laplacian, see \cite[Chapter 7]{Arv}. However, the focus in \cite{Arv} is very different from ours, and the space $\mathcal{C}$, which is a subspace of the domain of $L$, is not discussed in \cite{Arv}.

\medskip \noindent {\bf 5.6}\,  In another direction, it is not difficult to see that our arguments leading to Corollary \ref{propT2} can be adapted to deduce that if $G$ has property (T), then it is not possible to find $d\in ND_0^+(G)$, $x_0 \in {\rm vN}(G, \sigma) \setminus CF(G, \sigma)$ and $t>0$ such that $\widetilde{M}_t^d(x_0)\in CF(G, \sigma)$. It is conceivable that this is always possible whenever $G$ does not have property (T). Of course, this will be the case if one can show that if $G$ does not have property (T), then $(G, \sigma)$ has the weak heat property, cf.~our discussion in subsection \ref{WHP}.

\medskip \noindent {\bf 5.7}\,  
Let $1< p <\infty$. Then each multiplier $\varphi$ on $G$ gives also rise to a bounded map on the $L^p$-space associated to 
${\rm vN}(G)$ with respect to the canonical tracial state $\tau$.  Many authors have studied such maps, with a particular interest in the case where $\varphi_t = e^{-td}$ for some  $d\in ND_0^+(G)$, see e.g.~\cite{JMP, JMP2, JPPP} and references therein. We are not aware of any direct connection between these works and ours. But let us mention that it is observed in \cite[Section 3.3]{JPPP} that property (T) is an obstruction to ultracontractivity estimates.

\bigskip \noindent {\bf Acknowledgements}. Both authors are grateful to the Trond Mohn Foundation (TMS) for partial financial support through the project ``Pure Mathematics in Norway''.

\bigskip
{\parindent=0pt Addresses of the authors:\\

\smallskip Erik B\'edos, Department of Mathematics, University of
Oslo, \\ P.B. 1053 Blindern, N-0316 Oslo, Norway.\\ E-mail: bedos@math.uio.no. \\

\smallskip \noindent
Roberto Conti, Dipartimento SBAI, Sapienza Universit\`a di Roma \\
Via A. Scarpa 16, I-00161 Roma, Italy. \\ E-mail: roberto.conti@sbai.uniroma1.it
\par}

\end{document}